\def\dpd{\Delta} 
\def\var{\mathop{\rm Var}}
\newcommand{\qe}{E_\omega}
\newcommand{\dd}{\mathrm{d}}
\newcommand{\B}{\mathbb{B}}
\newcommand{\K}{\mathbb{K}}
\newcommand{\R} {\mathbb{R}}
\newcommand{\Q} {\mathbb{Q}}
\newcommand{\Z} {\mathbb{Z}}
\newcommand{\nn}{\nonumber}
\newcommand{\N} {\mathbb{N}}
\newcommand{\dist} {\textnormal{dist}}
\newcommand{\tr}{{\rm tr}}
\newcommand{\diag}{{\rm diag}}
\DeclarePairedDelimiter{\norm}{\lVert}{\rVert}
\providecommand{\Abs}[1]{\Bigr\lvert#1\Bigl\rvert}
\providecommand{\floor}[1]{\lfloor#1\rfloor}
\providecommand{\ceil}[1]{\lceil#1\rceil}
\providecommand{\mb}[1]{\mathbb{#1}}
\providecommand{\mc}[1]{\mathcal{#1}}
\providecommand{\ms}[1]{\mathscr{#1}}
\begin{document}




\section{Introduction}\label{sec:intro}
Let $\mb S_{d\times d}$ denote the set of $d\times d$ positive-definite diagonal matrices. A map 
\[
\omega:\Z^d\to\mb S_{d\times d}
\] is called an {\it environment}. Denote the set of all environments by $\Omega$ and let $\mb P$ be a probability measure on $\Omega$.
Expectation with respect to $\mb P$ is denoted by $\mb E$. 

Let $\{e_1,\ldots,e_d\}$ be the  canonical basis for $\R^d$.  
For any function $u:\Z^d\to\R$ and 
\[
\omega=\left\{\omega(x)=\mathrm{diag}[\omega_1(x),\ldots, \omega_d(x)], x\in\Z^d\right\}\in\Omega,
\]
define the non-divergence form difference operator
\begin{align*}
\tr(\omega(x)\nabla^2 u)=\sum_{i=1}^d\omega_i(x)[u(x+e_i)+u(x-e_i)-2u(x)],
\end{align*}
where $\nabla^2=\diag[\nabla^2_1,\ldots, \nabla_d^2]$, and 
$\nabla_i^2 u(x)=u(x+e_i)+u(x-e_i)-2u(x)$. 

For $r>0$,  with $|x|:=|x|_2$,  we let
\[
\B_r=\left\{x\in\R^d: |x|<r\right\}, \quad
B_r=\B_r\cap\Z^d
\]
denote the continuous and discrete balls with center $o=(0,\ldots,0)$ and radius $r$, respectively. For any $B\subset\Z^d$,  its {\it discrete boundary} is the set
\[
\partial B:=\left\{z\in\Z^d\setminus B: \dist(z,x)=1 \text{ for some }x\in B\right\},
\]
where $\dist(z,x):=|z-x|_1$. 
Let $\bar B=B\cup\partial B$. Note that with abuse of notation, whenever confusion does not occur,  we also use $\partial A$ and $\bar A$ to denote the usual continuous boundary and closure of $A\subset\R^d$, respectively.

 For $x\in\Z^d$, a {\it spatial shift} $\theta_x:\Omega\to\Omega$ is defined by 
 \[
 (\theta_x\omega)(\cdot)=\omega(x+\cdot).
 \]
In a random environment $\omega\in\Omega$, we consider the discrete elliptic Dirichlet problem
\begin{equation}\label{eq:elliptic-dirich}
\left\{
\begin{array}{lr}
\tfrac 12\tr(\omega\nabla^2u(x))=\frac{1}{R^2}f\left(\tfrac{x}{R}\right)\psi(\theta_x\omega) & x\in B_R,\\[5 pt]
u(x)=g\left(\tfrac{x}{|x|}\right) & x\in \partial B_R,
\end{array}
\right.
\end{equation}
where $f\in\R^{\B_1}, g\in\R^{\partial\B_1}$ are functions with good regularity properties and $\psi\in\R^\Omega$ is bounded and satisfies suitable measurability condition. Stochastic homogenization studies  (for $\mb P$-almost all $\omega$)  the convergence of $u$ to the solution $\bar u$ of a deterministic {\it effective equation}
\begin{equation}\label{eq:effective-ellip}
\left\{
\begin{array}{lr}
\tfrac 12\tr(\bar a D^2\bar{u})
=f\bar\psi &\text{ in }\B_1,\\
\bar u=g &\text{ on }\partial \B_1,
\end{array}
\right.
\end{equation}
as $R\to\infty$. Here $D^2 \bar{u}$ denotes the Hessian matrix of $\bar{u}$ and $\bar a=\bar a(\mb P)\in\mb S_{d\times d}$ and $\bar\psi=\bar\psi(\mb P,\psi)\in\R$ are {\it deterministic} and do not depend on the realization of the random environment (see the statement of Theorem \ref{thm:homog-nondiv} for formulas for $\bar{a}$ and $\bar{\psi}$).

Similarly we can also formulate the parabolic version of the discrete Dirichlet problem. To this end, we need some notations. Denote {\it parabolic cylinders} by
\[
\K_R:=\B_R\times[0,R^2)\subset\R^d\times\R, \quad 
K_R=\K_R\cap(\Z^d\times\Z),
\]
and their {\it parabolic boundaries} as
\begin{align*}
&\partial^p\K_R=\left(\partial\B_R\times(0,R^2)\right)\cup\left(\bar{\B}_R\times\{R^2\}\right), \\
&\partial^pK_R=\left(\partial B_R\times\{1,\ldots,\ceil{R^2}\}\right)\cup \left(B_R\times \{\ceil{R^2}\}\right)=:\partial^l K_R\cup\partial^t K_R.
\end{align*}
Here $\partial^l, \partial^t$ denote lateral- and time- boundaries. Write
\[
\bar\K_R=\K_R\cup\partial^p\K_R, \quad \bar K_R=K_R\cup\partial^p K_R.
\]
We will also consider the homogenization of the discrete parabolic problem
\footnote{Note that here the discrete Hessian $\nabla^2$ only acts on the space coordinate $x$.}
\begin{equation}\label{eq:para-dirich}
\begin{cases}
 \tfrac 12\tr(\omega\nabla^2u(x,n+1))+[u(x,n+1)-u(x,n)]=\tfrac{1}{R^2}f\left(\tfrac xR,\tfrac{n}{R^2}\right)\psi(\theta_x\omega) \quad\text{ in } K_R,\\
u(x,n)=g\left(\tfrac{x}{|x|\vee\sqrt n}, \tfrac{n}{|x|^2\vee n}\right) \qquad\qquad\text{on }\partial^p K_R, 
\end{cases}
\end{equation}
as $R\to\infty$ to an effective equation
\begin{equation}
\label{eq:effective-para}
\left\{
\begin{array}{lr}
\tfrac12\tr(\bar a D^2\bar u)+\bar b\partial_t\bar u=f\bar\psi\quad&\text{ in }\K_1\\
\bar u=g \quad&\text{ on }\partial^p\K_1,
\end{array}
\right.
\end{equation}
where $f\in\R^{\K_1}, g\in\R^{\partial^p\K_1}$, $\psi\in\R^\Omega$ are functions with suitable regularity and measurability, and $\bar a, \bar b, \bar\psi$ are deterministic.

The difference equations \eqref{eq:elliptic-dirich} and \eqref{eq:para-dirich} are used to describe random walks in a random environment (RWRE) in $\Z^d$. To be specific, 
we set
\begin{equation}\label{def:omegaxei}
\omega(x,x\pm e_i):=\frac{\omega_i(x)}{2\tr\omega(x)} \quad  \text{ for } i=1,\ldots d,
\end{equation}
 and $\omega(x,y)=0$ if $|x-y|\neq 1$. Namely, we normalize $\omega$ to get a transition probability.  We remark that the configuration of $\{\omega(x,y):x,y\in\Z^d\}$ is also called a {\it balanced environment} in the literature \cite{L-82,GZ-12,BD-14,DGR-15}.
For a fixed $\omega\in\Omega$, the random walk  $(X_n)_{n\ge 0}$ in the environment $\omega$ is a Markov chain  in $\Z^d$ with transition probability $P_\omega^x$ specified by
\begin{equation}\label{eq:def-RW}
P_\omega^x\left(X_{n+1}=z|X_n=y\right)=\omega(y,z).
\end{equation}
The expectation with respect to $P_\omega^x$ is written as $E_\omega^x$. When the starting point of the random walk is $0$, we sometimes omit the superscript and simply write $P_\omega^0, E_\omega^0$ as $P_\omega$ and $E_\omega$, respectively.  Notice that for random walks $(X_n)$ in an environment $\omega$, 
\begin{equation}\label{def:omegabar}
\bar\omega^i=\theta_{X_i}\omega\in\Omega, \quad i\ge 0,
\end{equation}
is also a Markov chain, called the {\it environment from the point of view of the particle}. With abuse of notation, we enlarge our probability space so that $P_\omega$ still denotes the joint law of the random walks and $(\bar\omega^i)_{i\ge 0}$. 

The following quenched central limit theorem (QCLT) is proved by Lawler \cite{L-82}, which is a discrete version of Papanicolaou,  Varadhan \cite{PV-82}.
\begin{theorem}[Lawler \cite{L-82}]\label{thm:QCLT}
Assume that the law $\mb P$ of the environment is ergodic under spatial shifts $\{\theta_x:x\in\Z^d\}$ and that $\mb P(\omega(0,\pm e_i)\ge \kappa, i=1,\ldots,d)=1$ for some constant $\kappa>0$. Then 
\begin{enumerate}[(i)]
\item There exists a probability measure $\mb Q$ that is mutually absolutely continuous with respect to $\mb P$ such that $(\bar\omega^i)_{i\ge 0}$ is an ergodic (with respect to time shifts) sequence under law $\mb Q\times P_\omega$.
\item For $\mb P$-almost every $\omega$, the rescaled path $X_{n^2t}/n$ converges weakly (under law $P_\omega$) to a Brownian motion with covariance matrix $\bar a=E_\Q[\omega(0)/\tr\omega(0)]>0$. 
\end{enumerate}
\end{theorem}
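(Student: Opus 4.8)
The plan rests on the balanced structure of $\omega$ and exploits it throughout. Because $\sum_{i=1}^d e_i\big(\omega(x,x+e_i)-\omega(x,x-e_i)\big)=0$, the walk $X_n$ is itself a $P_\omega$-martingale for each fixed $\omega$, so no corrector is needed and part (ii) will be deduced from part (i) by a martingale invariance principle. The substance is part (i): constructing the invariant measure $\mb Q$ of the environment chain $(\bar\omega^n)$ — whose transition operator is $P\phi(\omega)=E_\omega[\phi(\bar\omega^1)]=\sum_{e=\pm e_i}\omega(0,e)\,\phi(\theta_e\omega)$ — and showing $\mb Q\approx\mb P$ together with ergodicity of the chain.

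First I would produce $\mb Q$ by periodic approximation. For each $N$, let $\mb P_N$ be the law of an i.i.d.\ configuration on $(\Z/N\Z)^d$ extended $N$-periodically to $\Z^d$, so that $\mb P_N\to\mb P$ weakly. Under $\mb P_N$, uniform ellipticity makes the walk irreducible on the finite torus with a unique invariant probability $\pi^\omega_N$, and $\rho_N(\omega):=N^d\pi^\omega_N(0)$ is the $\mb P_N$-density of a measure $\mb Q_N$ that is invariant for $(\bar\omega^n)$; by translation symmetry $\mb E_{\mb P_N}[\rho_N]=\mb E_{\mb P_N}\big[\sum_{x\in(\Z/N\Z)^d}\pi^\omega_N(x)\big]=1$. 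The crucial point is a \emph{uniform} superlinear moment bound $\sup_N\mb E_{\mb P_N}[\rho_N^{\,p}]<\infty$ for some $p>1$; this is the probabilistic incarnation of the discrete Alexandrov--Bakelman--Pucci maximum-principle estimate for non-divergence difference operators (Kuo--Trudinger), which controls the exit law of the walk from a ball — equivalently the killed Green function — in an $L^{d/(d-1)}$-type norm. Given such a bound, $\{\rho_N\}$ is uniformly integrable, and a subsequential weak limit of $\mb Q_N$ (available since the relevant transition probabilities range over a compact set) is absolutely continuous, $\dd\mb Q=\rho\,\dd\mb P$ with $\rho\ge0$, $\mb E_{\mb P}[\rho]=1$, and $\mb Q$ is invariant for $(\bar\omega^n)$ by the Feller property of $P$.

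Next I would upgrade $\mb Q\ll\mb P$ to $\mb Q\approx\mb P$ and prove ergodicity. Using shift-invariance of $\mb P$, the invariance of $\mb Q$ reads $\rho(\omega)=\sum_{e=\pm e_i}\omega(-e,0)\,\rho(\theta_{-e}\omega)$; since $\omega(-e,0)\ge\kappa>0$, the event $\{\rho=0\}$ is invariant under every $\theta_{\pm e_i}$, hence under the whole $\Z^d$-action, and ergodicity of $\mb P$ with $\mb E_{\mb P}[\rho]=1$ forces $\mb P(\rho=0)=0$, so $\mb Q\approx\mb P$. For ergodicity of $(\bar\omega^n)$ under $\mb Q\times P_\omega$ it suffices (by the usual ergodicity criterion for stationary Markov chains) to show that every bounded $\phi$ with $P\phi=\phi$ ($\mb Q$-a.s.) is $\mb Q$-a.s.\ constant. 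For such $\phi$, Jensen's inequality gives the $\mb Q$-a.s.\ pointwise identity $P(\phi^2)-\phi^2=P(\phi^2)-(P\phi)^2=\sum_{e=\pm e_i}\omega(0,e)\big(\phi(\theta_e\omega)-\phi(\omega)\big)^2\ge0$, while invariance of $\mb Q$ forces $\mb E_{\mb Q}\big[P(\phi^2)-\phi^2\big]=0$; hence $\phi(\theta_e\omega)=\phi(\omega)$ whenever $\omega(0,e)>0$, so by uniform ellipticity $\phi$ is $\Z^d$-shift invariant, hence constant by ergodicity of $\mb P$. That completes part (i).

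Finally, for part (ii): $X_n$ is a $P_\omega$-martingale with steps bounded by $1$, and, conditionally on $\bar\omega^k$, its one-step covariance matrix is $\omega(X_k)/\tr\omega(X_k)=a(\bar\omega^k)$ with $a(\omega):=\omega(0)/\tr\omega(0)$ (the off-diagonal entries vanish because one step changes a single coordinate). Birkhoff's ergodic theorem for the chain $(\bar\omega^k)$ under $\mb Q\times P_\omega$ gives $\tfrac1n\sum_{k<n}a(\bar\omega^k)\to\mb E_{\mb Q}[a]=\bar a$ $\mb Q\times P_\omega$-a.s.; since $\mb Q\approx\mb P$, the normalized quadratic variation $\tfrac1n\langle X\rangle_n$ converges $P_\omega$-a.s.\ to $\bar a$ for $\mb P$-a.e.\ $\omega$. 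For each such $\omega$, the martingale functional central limit theorem applies — its Lindeberg condition being automatic from $|X_{k+1}-X_k|\le1$ — and shows that $X_{n^2t}/n$ converges weakly under $P_\omega$ to a Brownian motion with covariance $\bar a$; moreover $\bar a\ge 2\kappa I>0$ because each diagonal entry of $a(\omega)$ equals $2\,\omega(0,e_i)\ge 2\kappa$. The step I expect to be the main obstacle is the uniform moment bound on $\rho_N$, that is, the discrete maximum-principle estimate for non-divergence operators and its translation into control of exit laws and Green functions — precisely the ingredient that the quantitative parts of this paper are designed to sharpen.
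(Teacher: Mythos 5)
Your proposal is correct and is essentially the classical argument of Lawler (and Papanicolaou--Varadhan) that the paper cites without reproving: periodization of the environment, the uniform $L^{d/(d-1)}$-type moment bound on the invariant densities via the discrete Kuo--Trudinger/ABP maximum principle, the zero--one argument for $\{\rho=0\}$ giving $\mb Q\approx\mb P$, the Kozlov-type variance identity for ergodicity of the environment chain, and the martingale functional CLT for part (ii). You have also correctly isolated the one genuinely hard ingredient (the uniform integrability of $\rho_N$); no gaps.
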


This QCLT is later generalized to (non-uniformly) elliptic ergodic environment with a moment condition by Guo, Zeitouni \cite{GZ-12}, and genuinely $d$-dimensional i.i.d.\,environment without ellipticity by Berger, Deuschel \cite{BD-14}. For time-dependent balanced environments, the QCLT is proved by Deuschel, Guo, and Ramirez \cite{DGR-15}.

We remark that the QCLT is obtained for very few RWRE models with zero effective speed. 
Another case that QCLT is proved for zero-speed RWRE is the random conductance model, cf.\ the survey article by Biskup \cite{MB-11} and references therein. 
Note that unlike the QCLT of random conductance models, for balanced RWRE the invariant measure  $\Q$ of the environment as viewed from the particle does not have an explicit formula in terms of the environment measure $\mb P$. Even though by Birkhoff's ergodic theorem, $\Q$ can be approximated qualitatively by
\begin{equation}\label{eq:birkhoff}
\lim_{n\to\infty} \frac{1}{n} \sum_{i=0}^{n-1}E_\omega[\psi(\bar\omega^i)] = E_\Q[\psi] \quad \mb P\text{-a.s.}
\end{equation}
for any bounded function $\psi$ on environments, in order to better understand the effective matrix $\bar a$ it is important to quantify the speed of this convergence.

The difference equations \eqref{eq:elliptic-dirich}, \eqref{eq:para-dirich} and PDEs \eqref{eq:effective-ellip}, \eqref{eq:effective-para} are used to describe microscopic and macroscopic dynamics of a diffusive particle, respectively. For instance,  the solution of the Dirichlet problem \eqref{eq:elliptic-dirich} can be represented in terms of the RWRE:
\begin{equation}\label{eq:FK}
u(x)=E_\omega^x[g(X_\tau/|X_\tau|)]-\frac{1}{R^2}E_\omega^x\big[\sum_{i=0}^{\tau-1}f(\tfrac{X_i}{R})\psi(\bar\omega^i)\big],
\end{equation}
where $\tau=\tau_R=\min\{n\ge 0:X_n\notin B_R\}$. On the other hand, it is well-known that by the classical Feynman-Kac formula, the solution of the PDE \eqref{eq:effective-ellip} can be expressed similarly in terms of the Brownian motion with covariance matrix $\bar a$. The goal of this paper is to exploit this connection to quantify the rate of the micro-to-macro convergence  for both the equations and the processes.

\medskip

\noindent {\bf Throughout the paper, we assume}
\begin{enumerate}
\item [(A1)]
The measure $\mb P$ is translation-invariant under  shifts $\{\theta_x:x\in\Z^d\}$, and $\mb P$ has a finite range $\dpd>0$ of dependence.  That is,  for any subsets $A, B\subset\Z^d$ with $\dist(A,B)=\inf\{\dist(x,y): x\in A, y\in B\}\ge\dpd$,  the collections of variables $\{\omega(x):x\in A\}$ and $\{\omega(y):y\in B\}$ are independent.
\item[(A2)] $\frac{\omega}{\tr\omega}\ge 2\kappa{\rm I}$ for $\mb P$-almost every $\omega$ and some constant 
$\kappa>0$.
\end{enumerate} 
In the paper, we use $c, C$ to denote positive constants which may change from line to line but that only depend on the dimension $d$, the ellipticity constant $\kappa$,  and the range $\dpd$ of dependence unless otherwise stated.

\subsection{Main results}

Our first main result quantifies the speed of convergence in the ergodic averaging \eqref{eq:birkhoff}. 
 Recall  $\dpd$ in (A1). 
We say that $\psi\in\R^\Omega$ is a {\it local function} if it satisfies
\begin{enumerate}
\item[(A3)] 
      \begin{itemize}
      \item $\psi$ is measurable;
      \item $\omega(x)=\tilde{\omega}(x)$ for all $x\notin B_\dpd$ implies $\psi(\omega)=\psi(\tilde\omega)$.
      \end{itemize}
\end{enumerate}
Sometimes we may replace (A3) with the following assumption.
\begin{enumerate}
\item[(A4)] $\psi$ satisfies (A3), $\norm{\psi}_\infty=1$, and $E_{\Q}[\psi]=0$.
\end{enumerate}
\begin{theorem}\label{thm:quan-ergo}
Assume (A1), (A2), (A3).
Recall the notation $\bar\omega^i$ in \eqref{def:omegabar}. 
 For any $p\in(0,d)$, there exist positive constants $\alpha, C, c$ depending only on $(d,\kappa,p,\dpd)$ such that for 
any stopping time $T$ of the random walk $(X_i)_{i\ge 0}$,
 \[
 \mb P\left(\Abs{\frac 1n E_\omega\big[\sum_{i=0}^{T\wedge n-1}(\psi(\bar\omega^i)-E_\Q\psi)\big]}\ge C\norm{\psi}_\infty n^{-\alpha}\right)\le Ce^{-cn^{p/2}}.
 \]
 In particular,
\[
\mb P\left(\Abs{\frac 1n\sum_{i=0}^{n-1}E_\omega[\psi(\bar\omega^i)]-E_\Q\psi}\ge C\norm{\psi}_\infty n^{-\alpha}\right)\le Ce^{-cn^{p/2}}.
\]
\end{theorem}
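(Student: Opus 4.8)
The plan is to pass, by optional stopping, from the time average to a Green's‑function–weighted finite‑volume spatial average of $\psi$, and then to quantify the convergence of that average using the discrete maximum principle and Harnack inequality for non‑divergence‑form difference operators (Kuo--Trudinger) together with a multiscale concentration estimate in the i.i.d.\ environment. Concretely, fix $R=\lceil\sqrt n\,\log n\rceil$ and let $v=v_R$ solve $\tfrac12\tr(\omega\nabla^2 v(x))=-\tr\omega(x)(\psi(\theta_x\omega)-E_\Q\psi)$ on $B_R$ with $v=0$ on $\partial B_R$, so that $v(x)=E_\omega^x\big[\sum_{i=0}^{\tau_R-1}(\psi(\bar\omega^i)-E_\Q\psi)\big]$, a representation of the type \eqref{eq:FK}. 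Since $(X_k)$ is a $P_\omega$‑martingale with bounded increments, $P_\omega(\tau_R\le n)\le e^{-c(\log n)^2}$, so the escape error $2n\norm{\psi}_\infty P_\omega(\tau_R\le n)\le\norm{\psi}_\infty n^{-10}$ holds for every $\omega$; and $M_k:=v(X_k)+\sum_{i<k}(\psi(\bar\omega^i)-E_\Q\psi)$ is a $P_\omega$‑martingale up to $\tau_R$, so optional stopping at $T\wedge n\wedge\tau_R$ (using $v\equiv0$ on $\partial B_R$) gives, for every stopping time $T$,
\[\Big|E_\omega\big[\textstyle\sum_{i=0}^{T\wedge n-1}(\psi(\bar\omega^i)-E_\Q\psi)\big]\Big|\le 2\norm{v_R}_{\ell^\infty(B_R)}+\norm{\psi}_\infty n^{-10}.\]
Writing $\bar\psi_R(x):=\big(\sum_yG^{B_R}(x,y)\psi(\theta_y\omega)\big)/E_\omega^x[\tau_R]$ and using the ABP bound $E_\omega^x[\tau_R]\asymp R^2$, one has $|v_R(x)|\le CR^2\,|\bar\psi_R(x)-E_\Q\psi|$; since $R^2/n$ is polylogarithmic, the theorem reduces to showing $\mb P\big(|\bar\psi_R(0)-E_\Q\psi|\ge C\norm{\psi}_\infty R^{-2\alpha}\big)\le Ce^{-cR^p}$ (for fixed base point, by stationarity) and then union‑bounding over the $\le CR^d$ base points $x\in B_R$, a cost absorbed into $e^{-cR^p}\le e^{-cn^{p/2}}$.

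Qualitatively $\bar\psi_R(0)\to E_\Q\psi$ by Birkhoff's theorem, since the walk equilibrates to $\Q$ inside $B_R$ on the time scale $R^2\ll\tau_R$. For a rate I split $\bar\psi_R(0)-E_\Q\psi$ into a fluctuation $\bar\psi_R(0)-\mb E\bar\psi_R(0)$ and a systematic error $\mb E\bar\psi_R(0)-E_\Q\psi$. The deterministic machinery---the estimate $E_\omega^\cdot[\tau_R]\asymp R^2$, heat‑kernel/Harnack comparability of the random Green's function $G^{B_R}(0,\cdot)$ (which solves the adjoint, double‑divergence difference equation and is normalized by the invariant density $\rho(\theta_\cdot\omega)$), and a quantitative $\ell$‑local approximation of $\rho=\dd\Q/\dd\mb P$---comes from the Kuo--Trudinger maximum principle and Harnack inequality. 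The systematic error I control by telescoping over dyadic scales: an excursion decomposition of the walk in $B_{2R}$ relative to $B_R$, fed the fluctuation bound below, gives $|\mb E\bar\psi_R(0)-\mb E\bar\psi_{2R}(0)|\le C\norm{\psi}_\infty R^{-2\alpha}$, and summing the geometric series both pins the limit at $E_\Q\psi$ and gives the rate.

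For the fluctuation bound, the weights $w_y:=G^{B_R}(0,y)/E_\omega[\tau_R]\asymp|y|^{2-d}/R^2$ are distributed in a genuinely $d$‑dimensional way: the dyadic annulus $A_j=\{|y|\asymp 2^j\}$ of $B_R$ carries Green's mass $\asymp(2^j/R)^2$ spread over $\asymp 2^{jd}$ sites. On the inner annuli ($2^j\le R^{1-\alpha}$) the contribution $\sum_{y\in A_j}w_y(\psi(\theta_y\omega)-E_\Q\psi)$ is bounded \emph{deterministically} by $C\norm{\psi}_\infty(2^j/R)^2$, and these sum to $\le C\norm{\psi}_\infty R^{-2\alpha}$. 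On each outer annulus one localizes $G^{B_R}(0,\cdot)$ at a mesoscopic scale $\ell$ (error Harnack‑controlled), representing the contribution as a weighted sum of $\ell$‑local functionals of $\omega$ over $\asymp 2^{jd}/\ell^d$ independent cells---each functional already reflecting the invariant density at scale $\ell$, which is why the sum concentrates around $E_\Q\psi$ rather than $\mb E\psi$---so a Hoeffding/Azuma estimate, using the $\asymp 2^{jd}\ge R^{d(1-\alpha)}$ independent sites available, makes the $j$‑th contribution $\le C\norm{\psi}_\infty R^{-2\alpha}/\log R$ off an event of probability $\le e^{-cR^p}$ provided $\alpha=\alpha(d,\kappa,p)>0$ is chosen small enough (of order $(d-p)/d$); the $\ell$‑localization error is absorbed by optimizing $\ell$. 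Summing the $O(\log R)$ annuli completes the estimate, and it is precisely the $\asymp 2^{jd}$ site count of the annuli that yields the full range $p<d$.

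The decisive difficulty is this concentration step. A crude bounded‑difference (McDiarmid) estimate is useless: changing $\omega$ at a single site can shift the Green's function of a non‑divergence‑form operator by an amount of order $R^2\,G^{B_R}(0,\cdot)$. One genuinely needs a Harnack‑based, quantitatively accurate finite‑range approximation of $G^{B_R}(0,\cdot)$---and, correlatedly, of $\rho$---both to localize the weights and to preserve the correct limit $E_\Q\psi$; and the approximation scale must be matched to the $d$‑dimensional site count finely enough to reach \emph{every} $p<d$, not merely $p$ below some lower threshold. The remaining ingredients (ABP/Harnack, optional stopping, the dyadic telescoping for the systematic error, the union bound over base points) are comparatively routine.
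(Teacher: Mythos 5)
Your reduction is sound and in fact parallels the paper's: by optional stopping for the martingale $v(X_k)+\sum_{i<k}(\psi(\bar\omega^i)-E_\Q\psi)$, the quantity $\frac1n E_\omega\big[\sum_{i<T\wedge n}(\psi(\bar\omega^i)-E_\Q\psi)\big]$ is controlled by $\max_{B_R}|v_R|/n$ for the finite-volume corrector $v_R$ solving $L_\omega v_R=-(\psi_\omega-E_\Q\psi)$ with zero boundary data, so everything reduces to showing $\max_{B_R}|v_R|\le C\norm{\psi}_\infty R^{2-2\alpha}$ off an event of probability $e^{-cR^p}$. That is exactly Proposition~\ref{prop:conv_rate_evfpvp}, which is the main content of Section~\ref{sec:quan-ergo} and which the paper then combines with an excursion decomposition and the exit-time tail $P_\omega(\tau_k<n)\le Ce^{-ck c_2}$ in Section~\ref{sec:pf-quan-ergo}. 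The genuine gap is that you do not prove this key estimate: the Green's-function concentration argument you sketch for it --- which you yourself flag as ``the decisive difficulty'' --- rests on ingredients that are either unavailable in the balanced, non-divergence-form setting or circular.

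Concretely: (1) the two-sided comparison $G^{B_R}(0,y)\asymp|y|^{2-d}$ and, above all, the ``quantitative $\ell$-local approximation of $\rho=\dd\Q/\dd\mb P$'' are not available here. As the paper emphasizes, $\Q$ has no formula in terms of $\mb P$, and quantifying how fast local or finite-volume approximations of $\rho$ converge is essentially equivalent to the theorem you are trying to prove. (2) Your fluctuation bound requires each $\ell$-localized cell functional to be centered at $E_\Q\psi$ up to $O(R^{-2\alpha})$ --- otherwise Hoeffding concentrates the annulus sums around the wrong, $\ell$-dependent mean --- but that centering \emph{is} the systematic-error estimate at scale $\ell$, which you propose to obtain by telescoping across dyadic scales \emph{using} the fluctuation bound. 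As written the two halves presuppose each other, and no induction on scales is set up (with a base case and propagation of constants) to break the circle. (3) The claimed $\ell$-localization of $G^{B_R}(0,\cdot)$ with ``Harnack-controlled'' error is itself a quantitative homogenization statement for the Green's function, stronger than the target theorem. The paper sidesteps all of this by never touching $\rho$ or $G$ quantitatively: it runs the Armstrong--Smart scheme on the subadditive convexity quantities $\mu_n(s),\mu_n^*(s)$ --- the ABP/subdifferential upper bound (Lemma~\ref{lem:subdiff-ub}, Theorem~\ref{thm:190225}), a lower bound that uses only the \emph{qualitative} ergodic theorem via Berger's argument (Lemma~\ref{lem:conv_rate_evfpvp}, Corollary~\ref{cor:mu-lowerb}), and Lemma~\ref{lem:no-induction} --- to get exponential decay of $\mb E[\mu_n(0)^2]$ and then concentration from i.i.d.\ subadditivity (Corollary~\ref{cor:mu_conc_bound}). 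To repair your proof you would need to supply that machinery, or a genuine substitute for it, at the concentration step.
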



As a consequence of Theorem~\ref{thm:quan-ergo}, we obtain a Berry-Esseen type estimate for the one-dimensional projections of the QCLT (Theorem~\ref{thm:QCLT}).
\begin{theorem}
\label{thm:BE}
Assume (A1), (A2).
For any $p\in(0,d)$, there exists a constant $\gamma=\gamma(p, d,\kappa,\dpd)>0$ such that for any unit vector $\ell\in\R^d$,
with $\mb P$-probability at least $1-Ce^{-n^{p/2}}$, 
\[
\sup_{r\in\R}
\Abs{
P_\omega
\left(
X_{n}\cdot\ell/\sqrt n \le r\sqrt{\ell^T\bar a\ell}
\right)
-\Phi(r)
}
\le 
Cn^{-\gamma},
\]
where $\Phi(r)=(2\pi)^{-1/2}\int_{-\infty}^r e^{-x^2/2}\dd x$ for all $r \in \R$.
\end{theorem}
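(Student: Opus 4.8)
The plan is to pass from the random walk $X_n\cdot\ell$ to a martingale and then apply a classical martingale Berry--Esseen bound, controlling the two error terms (the variance deficit and the corrector remainder) by means of the quantitative ergodic estimate in Theorem~\ref{thm:quan-ergo}. First I would recall the standard decomposition: writing $\phi_\ell$ for the (discrete) corrector in direction $\ell$, i.e.\ the solution of the appropriate ``$\tr(\omega\nabla^2\,\cdot\,)=0$'' type equation such that $M_n := \ell\cdot X_n + \phi_\ell(\bar\omega^n) - \phi_\ell(\bar\omega^0)$ is a $P_\omega$-martingale with stationary increments under $\Q\times P_\omega$. The increments $D_i := M_{i+1}-M_i$ are bounded (the corrector has bounded gradient by the ellipticity assumption (ii) and the Lipschitz estimates for balanced operators), and their quenched conditional variance is $v(\bar\omega^i)$ for a fixed bounded function $v$ on environments with $E_\Q[v] = \ell^T\bar a\ell =: \sigma_\ell^2 > 0$.

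Next I would invoke a Berry--Esseen theorem for martingales with bounded increments (e.g.\ the Heyde--Brown / Bolthausen-type estimate): for $\mb P$-a.e.\ $\omega$,
\[
\sup_{r}\Abs{P_\omega\!\left(\tfrac{M_n}{\sqrt{n}\,\sigma_\ell}\le r\right)-\Phi(r)}
\le C\left(\Abs{\tfrac{1}{n}\sum_{i=0}^{n-1}E_\omega[v(\bar\omega^i)\mid\mathcal F_i]-\sigma_\ell^2}^{?}+\text{(higher-moment terms)}\right)^{1/(2k+1)},
\]
so that a polynomial-in-$n^{-1}$ bound on the conditional-variance fluctuation yields a polynomial Berry--Esseen rate. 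The key input is exactly Theorem~\ref{thm:quan-ergo} applied to $\psi=v$ (and, for the higher conditional moments, to the corresponding bounded functions of $\omega(0)$ — note these depend only on $\omega(0)$ since the corrector gradient does): on an event of $\mb P$-probability at least $1-Ce^{-cn^{p/2}}$ we get $\bigl|\frac1n\sum_{i=0}^{n-1}E_\omega[v(\bar\omega^i)]-\sigma_\ell^2\bigr|\le Cn^{-\alpha}$, and a similar bound (with the optional stopping/$T\wedge n$ formulation) controls the \emph{conditional} averages $\frac1n\sum_{i}E_\omega[v(\bar\omega^i)\mid\mathcal F_i]$ up to a martingale fluctuation of size $O_{P_\omega}(n^{-1/2})$ which one absorbs with an $L^2$ maximal inequality. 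This produces, on the good event, the bound $\sup_r|P_\omega(M_n/(\sqrt n\sigma_\ell)\le r)-\Phi(r)|\le Cn^{-\gamma_0}$ for some $\gamma_0=\gamma_0(p,d,\kappa)>0$.

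Finally I would remove the corrector: since $\phi_\ell$ is bounded, $|X_n\cdot\ell - M_n|\le 2\|\phi_\ell\|_\infty =: L$, and hence for any $r$,
\[
P_\omega\!\left(\tfrac{M_n}{\sqrt n\,\sigma_\ell}\le r-\tfrac{L}{\sqrt n\,\sigma_\ell}\right)
\le P_\omega\!\left(\tfrac{X_n\cdot\ell}{\sqrt n\,\sigma_\ell}\le r\right)
\le P_\omega\!\left(\tfrac{M_n}{\sqrt n\,\sigma_\ell}\le r+\tfrac{L}{\sqrt n\,\sigma_\ell}\right);
\]
comparing each side to $\Phi$ and using $|\Phi(r\pm L/(\sqrt n\sigma_\ell))-\Phi(r)|\le C/\sqrt n$ together with the martingale estimate gives the claimed bound $\sup_r|P_\omega(X_n\cdot\ell/\sqrt n\le r\sqrt{\ell^T\bar a\ell})-\Phi(r)|\le Cn^{-\gamma}$ with $\gamma=\min(\gamma_0,\tfrac12)$, on the same event of $\mb P$-probability $\ge 1-Ce^{-cn^{p/2}}$ (rescaling constants to match the stated $1-Ce^{-n^{p/2}}$). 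The main obstacle I expect is twofold: verifying the boundedness (and $\omega(0)$-measurability, up to harmless localization) of the corrector and its gradient for the \emph{non-divergence} balanced operator — which requires the quantitative Lipschitz/Harnack-type estimates for discrete balanced operators rather than the easier divergence-form theory — and, more seriously, upgrading Theorem~\ref{thm:quan-ergo} from a statement about $E_\omega[\cdot]$ to the needed control on the \emph{filtration-conditional} Cesàro averages $\frac1n\sum_i E_\omega[v(\bar\omega^i)\mid\mathcal F_i]$; this is why the stopping-time version in Theorem~\ref{thm:quan-ergo} is stated, and the argument will hinge on combining it with Doob/$L^2$ estimates for the martingale part of this conditional sum.
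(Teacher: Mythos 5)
Your overall strategy -- reduce to a martingale Berry--Esseen bound and feed the conditional-variance fluctuation into Theorem~\ref{thm:quan-ergo} -- is exactly the paper's, but the corrector layer you build on top of it is both unnecessary and, as justified, a gap. Because the environment is \emph{balanced}, $E_\omega^x[X_1-x]=\sum_i\omega(x,x+e_i)e_i-\omega(x,x-e_i)e_i=0$, so $X_n\cdot\ell$ is already a $P_\omega$-martingale and the corrector $\phi_\ell$ is identically zero; there is no need for (and, for non-divergence form operators, no available theory giving) a bounded corrector with bounded gradient, so the ``main obstacle'' you flag simply does not arise. Dropping the corrector also dissolves the measurability worry: the conditional variance $E_\omega[((X_{k+1}-X_k)\cdot\ell)^2\mid\ms F_k]=\sum_i\tfrac{\omega_i(X_k)}{\tr\omega(X_k)}\ell_i^2$ is genuinely a bounded function of $\bar\omega^k(0)$ with $\Q$-mean $\ell^T\bar a\ell$, so Theorem~\ref{thm:quan-ergo} applies verbatim.

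For your second concern -- controlling the filtration-conditional Ces\`aro average rather than its $E_\omega$-expectation -- the paper does this without Doob's inequality: it bounds
\[
\qe\Big[\big(\tfrac1n\textstyle\sum_{k=0}^{n-1}\psi(\bar\omega^k)\big)^2\Big]
\le \tfrac{2}{n^2}\sum_{i=0}^{n-1}\qe\Big[\psi(\bar\omega^i)\,\qe^{X_i}\big[\textstyle\sum_{j=0}^{n-i-1}\psi(\bar\omega^j)\big]\Big]\le Cn^{-\alpha},
\]
using the Markov property together with the fact that, on an event of probability $1-Ce^{-cn^{p/2}}$, the bound $|\qe^x[\sum_{i=0}^m\psi(\bar\omega^i)]|\le Cn^{1-\alpha}$ holds \emph{uniformly} over starting points $x\in B_n$ and $m\le n$ (a union bound over the ball). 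This $L^2(P_\omega)$ estimate is precisely the hypothesis of the quantitative martingale CLT of Haeusler/Mourrat, which then yields the rate $C(n^{-\alpha}+n^{-1})^{1/5}$. So your plan is sound once you delete the corrector and replace the Doob-maximal-inequality step by this direct second-moment computation.
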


\begin{remark}
This is a quantification of the QCLT in Theorem~\ref{thm:QCLT}.
 Previously, for reversible RWRE models, quantitative CLTs were proved by Mourrat \cite{JM-12} and Andres,  Neukamm \cite{AN-17} in the case of the random conductance model, and by Ahn, Peterson \cite{AP-17} in the case of one-dimensional i.i.d.\,environments. 
 For non-reversible RWRE, quantitative CLTs were obtained by Guo,  Peterson \cite{GP-17} for certain ballistic RWRE.

 Quantitative CLTs \cite{AP-17,GP-17} for ballistic RWRE were obtained by
 comparing the random path to sum of independent random variables. 
 However, in the zero-speed regime, due to the complicated correlation between the path and the environment,  there is no such independence structure to exploit.
 Here the quantitative control (Theorem~\ref{thm:quan-ergo}) on the ergodicity of the environment as viewed from the particle  plays a key role.
\end{remark}

Finally, our last two main results give algebraic convergence rates for the stochastic homogenization of the discrete elliptic and parabolic difference equations in \eqref{eq:elliptic-dirich} and \eqref{eq:para-dirich}, respectively.

\begin{theorem}\label{thm:homog-nondiv}
 Assume (A1), (A2), (A3).
Recall the measure $\mb Q$ in Theorem~\ref{thm:QCLT}.
Suppose\footnote{Readers may refer to \cite[Chapter 4.1]{GiTr} for definitions of $C^{0,1}$ and the associated norms.} $g\in C^{3}(\partial \B_1), f\in C^{0,1}(\B_1)$, and $\psi$ 
satisfies $\norm{\psi/\tr\omega(0)}_{\infty}<\infty$. 
For any $q\in(0,d)$, there exist a random variable $\ms X=\ms X(\omega, q,d,\kappa)$
with $\mb E[\exp(c\ms X^d)]<\infty$, and positive
constants $\beta=\beta(d,\kappa,q, \dpd)$ and $C=C(d,\kappa, \dpd,\norm{f}_{C^{0,1}(\B_1)}+\norm{g}_{C^{3}(\partial \B_1)}+\norm{\psi/\tr\omega}_\infty)$ such that
for all $R>0$,  the solution $u$ of \eqref{eq:elliptic-dirich} satisfies 
\[
\max_{x\in B_R}\left|u(x)-\bar u\left(\tfrac{x}{R}\right)\right|
\le 
C(1+\ms X R^{-q/d})R^{-\beta},
\]
where $\bar u$ is the solution of the effective equation \eqref{eq:effective-ellip} with
$\bar a=E_{\mb Q}[\omega/\tr\omega]>0$ and $\bar\psi=E_{\Q}[\psi/\tr\omega]$. 
In particular,
\[
\mb P\left(\max_{x\in B_R}\left|u(x)-\bar u\left(\tfrac{x}{R}\right)\right|\ge 2C R^{-\beta}\right)
\le C\exp(-c R^q).
\]
\end{theorem}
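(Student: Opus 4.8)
The plan is to compare $u$ with the lattice restriction of an extension of $\bar u$, estimate the difference through the martingale structure of the balanced walk, and thereby reduce everything to a quantitative control on the ergodic average along the path of a $\Q$-centred, spatially slowly varying functional of the environment --- which is precisely Theorem~\ref{thm:quan-ergo} on a single block of deterministic length; apart from elementary random-walk estimates, Theorem~\ref{thm:quan-ergo} will be the only probabilistic input. \emph{Setting up the error equation:} by interior and boundary Schauder estimates for the constant-coefficient equation \eqref{eq:effective-ellip} (legitimate since $2\kappa\,\mathrm{I}\le\bar a\le\mathrm{I}$, $f\in C^{0,1}(\B_1)$, $g\in C^3(\partial\B_1)$ and $\partial\B_1$ is smooth), $\bar u\in C^{2,\alpha}(\bar{\B}_1)$ for every $\alpha\in(0,1)$ with $\|\bar u\|_{C^{2,\alpha}(\bar{\B}_1)}\le C_1$; I would extend $\bar u$ to $C^{2,\alpha}(\bar{\B}_2)$ with comparable norm so that Taylor expansions are valid up to the boundary. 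Put $v(x):=\bar u(x/R)$ on $\bar B_R$ and $w:=u-v$. A second-order Taylor expansion of the discrete second differences of $v$, combined with \eqref{eq:elliptic-dirich}, gives for $x\in B_R$
\[
E_\omega^x[w(X_1)]-w(x)=\tfrac1{R^2}\,G\big(\theta_x\omega,\tfrac xR\big)+\rho(x),\qquad|\rho(x)|\le C_1R^{-2-\alpha},
\]
where, writing $\bar a=\diag[\bar a_1,\dots,\bar a_d]$,
\[
G(\omega,y):=f(y)\Big(\tfrac{\psi(\omega)}{\tr\omega(0)}-\bar\psi\Big)-\tfrac12\sum_{i=1}^d\Big(\tfrac{\omega_i(0)}{\tr\omega(0)}-\bar a_i\Big)\partial_{ii}\bar u(y).
\]
The function $G(\cdot,y)$ depends only on $\omega(0)$, is bounded by $C_1$ and $\alpha$-Hölder in $y$ with seminorm $\le C_1$ uniformly, and --- the crucial point --- $E_\Q[G(\cdot,y)]=0$ for every $y\in\B_1$, because $\bar a=E_\Q[\omega/\tr\omega]$, $\bar\psi=E_\Q[\psi/\tr\omega]$ and $\bar u$ solves \eqref{eq:effective-ellip}.

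\emph{Optional stopping and reduction.} Since $w$ is bounded and $E_\omega^x[\tau_R]\le(R+1)^2$ for every $\omega$ (this, as well as $E_\omega^z[|X_j-z|^2]=j$, follows from the martingale property of the balanced walk together with $E_\omega^z[|X_1-z|^2]=1$), optional stopping at $\tau=\tau_R$ gives, for $x\in B_R$,
\[
w(x)=E_\omega^x[w(X_\tau)]-\tfrac1{R^2}E_\omega^x\Big[\sum_{i=0}^{\tau-1}G\big(\bar\omega^i,\tfrac{X_i}R\big)\Big]+E_\omega^x\Big[\sum_{i=0}^{\tau-1}\rho(X_i)\Big].
\]
Here $|E_\omega^x[w(X_\tau)]|\le\max_{\partial B_R}|w|\le C_1/R$ (using $|x|\in[R,R+1)$ on $\partial B_R$ and the $C^1$-bound on $\bar u$), and $\big|E_\omega^x[\sum_{i<\tau}\rho(X_i)]\big|\le C_1R^{-2-\alpha}E_\omega^x[\tau_R]\le C_1R^{-\alpha}$, so the theorem reduces to proving that, with $\mb P$-probability $\ge1-Ce^{-cR^q}$,
\[
\sup_{x\in B_R}\frac1{R^2}\Big|E_\omega^x\Big[\sum_{i=0}^{\tau_R-1}G\big(\bar\omega^i,\tfrac{X_i}R\big)\Big]\Big|\le C_1R^{-\beta}.
\]

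\emph{The main estimate, via Theorem~\ref{thm:quan-ergo}.} Fix $\theta\in(0,1)$ and block length $L=\ceil{R^{2\theta}}$, and cut $[0,\tau_R)$ into blocks $[kL,(k+1)L)$. First I would replace $G(\bar\omega^i,X_i/R)$ by $G(\bar\omega^i,X_{kL}/R)$ on block $k$: since $G$ is $\alpha$-Hölder in space and $E_\omega^z[|X_j-z|^2]=j$, the accumulated cost --- after the factor $R^{-2}$ and $E_\omega^x[\ceil{\tau_R/L}]\le CR^2/L$ --- is deterministically $\le C_1R^{-\alpha(1-\theta)}$, the incomplete last block and the $\{i<\tau_R\}$-truncation contributing a further $\le C_1R^{2\theta-2}$. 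For the replaced sum, conditioning on $\mathcal F_{kL}$ and using the Markov property, block $k$ contributes $E_\omega^z\big[\sum_{j=0}^{L-1}G(\bar\omega^j,y)\big]$ at $(z,y)=(X_{kL},X_{kL}/R)$. For \emph{fixed} $z\in B_R$ and $y\in\B_1\cap\frac1R\Z^d$, Theorem~\ref{thm:quan-ergo} applied to the origin-based function $G(\cdot,y)$ (with $E_\Q G(\cdot,y)=0$ and deterministic horizon $T\equiv L$), translated from $0$ to $z$ via the i.i.d.\ assumption, gives $\mb P\big(|E_\omega^z[\sum_{j<L}G(\bar\omega^j,y)]|\ge C_1L^{1-\alpha_0}\big)\le Ce^{-cL^{p/2}}$, where $\alpha_0=\alpha_0(d,\kappa,p)>0$. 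A union bound over the $\le CR^{2d}$ pairs $(z,y)$ produces an event $\mathcal G_R$ with $\mb P(\mathcal G_R^c)\le CR^{2d}e^{-cR^{\theta p}}$ on which the replaced sum has $\tfrac1{R^2}|E_\omega^x[\cdot]|\le C_1R^{-2\theta\alpha_0}$. Hence on $\mathcal G_R$,
\[
\max_{x\in B_R}|w(x)|\le C_1\big(R^{-1}+R^{-\alpha}+R^{-\alpha(1-\theta)}+R^{2\theta-2}+R^{-2\theta\alpha_0}\big).
\]
Taking $\alpha$ close to $1$, choosing $\theta\in(0,1)$ to balance the exponents, and picking $p\in(0,d)$ with $\theta p>q$ (feasible, with $\theta\to1$ as $q\to d$) gives the bound above with some $\beta=\beta(d,\kappa,q)>0$ and $\mb P(\mathcal G_R^c)\le Ce^{-cR^q}$; for $R$ in a bounded range the claim is absorbed into constants and for small $R$ it is trivial since $R^{-\beta}\to\infty$.

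\emph{The random variable $\ms X$, and the main obstacle.} For the statement with $\ms X$, one runs the above at every scale and sets $\ms X$ equal to a fixed finite constant plus $\sup_{R>0}R^{\beta+q/d}\mathbf 1_{\mathcal G_R^c}$: on $\mathcal G_R$ the bound $\max_x|u(x)-\bar u(x/R)|\le C_1(1+\ms XR^{-q/d})R^{-\beta}$ is what was just proved, and on $\mathcal G_R^c$ it follows from $\ms X\ge R^{\beta+q/d}$ together with the a priori bound $\max_x|u-v|\le C_1$ (from the maximum principle / \eqref{eq:FK}); after decreasing $\beta$ slightly so that $\beta d+q<\theta p$, summing $\mb P(\mathcal G_R^c)\le Ce^{-cR^q}$ over integer scales yields $\mb P(\ms X\ge t)\le Ce^{-ct^d}$, hence $\mb E[\exp(c\ms X^d)]<\infty$, and the ``in particular'' clause is Chebyshev's inequality applied to the inclusion $\{\max_x|u-\bar u(\cdot/R)|\ge2C_1R^{-\beta}\}\subset\{\ms X\ge R^{q/d}\}$. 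The delicate step is the main estimate: Theorem~\ref{thm:quan-ergo} is tailored to a deterministic time horizon and a fixed starting point, whereas here one confronts the $\omega$-dependent exit time $\tau_R$ and a space-dependent weight. The block decomposition --- powered by the exact $L^2$-identity of the balanced walk --- disentangles these, but at the price of a union bound over $\sim R^{2d}$ ``restart'' configurations $(X_{kL},X_{kL}/R)$; reconciling that union bound with an exceptional probability of the form $e^{-cR^q}$ while keeping the deterministic rate $\beta$ strictly positive is exactly what forces the coupling $\theta p>q$, and hence the degradation of $\beta$ as $q\uparrow d$.
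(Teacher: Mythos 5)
Your argument is correct, and it takes a genuinely different route from the paper's. The paper never invokes Theorem~\ref{thm:quan-ergo} in this proof: it works at a spatial mesoscale $R_0=R^{\gamma}$, defines good points via Proposition~\ref{prop:conv_rate_evfpvp}, shows that a perturbation $w$ of the error has empty subdifferential at every good point (via the coarse-grained walk stopped on exiting $B_{R_0}(x)$ and the optional stopping theorem), and then converts this into an $L^\infty$ bound with the ABP inequality, paying only for the \emph{number} of bad points $\ms B_R$, whose exponential moments are controlled by the i.i.d.\ structure; the random variable $\ms X$ is the normalized count $R^{-\gamma}\ms B_R^{1/d}$. You instead work at the (diffusively equivalent) temporal mesoscale $L=R^{2\theta}$, represent the error by Feynman--Kac/optional stopping at $\tau_R$ directly, and control each time-block by Theorem~\ref{thm:quan-ergo} (whose stopping-time form, note, already absorbs the $\tau_R$-truncation you bound crudely), with a union bound over the $\lesssim R^{d}$ restart points. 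This trades the paper's ABP/subdifferential machinery for a union bound, which is affordable precisely because the per-point failure probability $e^{-cL^{p/2}}$ is stretched-exponential; your $\ms X$ is then a cruder indicator-based object, but your bookkeeping ($\theta p>q$ and $\beta d+q<\theta p$) does deliver $\mb P(\ms X\ge t)\le Ce^{-ct^{d}}$ and the stated conclusion. The paper's route is more robust (it would survive a merely polynomial per-point failure probability and gives $\ms X$ a concrete meaning), while yours is shorter and leans on a single probabilistic input. Two small points to tighten: (i) your $R^{2d}$ count of pairs $(z,y)$ is really $R^{d}$ since $y=z/R$ is determined by $z$ (harmless either way); (ii) the passage from integer scales to all $R>0$ in the definition of $\ms X$ deserves a sentence (e.g.\ comparing the problems at $R$ and $\lceil R\rceil$, or enlarging the good event to a dyadic/unit mesh of scales), though the paper's own $\max_{R\ge1}\ms X_R$ glosses over the same issue.
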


\begin{theorem}
\label{thm:homog-parab}
Assume (A1), (A3), and that
 $\kappa I\le \omega(0)\le \kappa^{-1}I$ for $\mb P$-almost all $\omega$.
Suppose\footnote{Here, $C_2^3$ means ``$C^3$ in space and $C^2$ in time". } $f\in C^{0,1}(\K_1)$, $g\in C_2^3(\partial^p\K_1)$.
For any $q\in(0,d)$, there exist positive constants $\beta=\beta(d,\kappa,\dpd, q)$, $C=C(d,\kappa,\dpd, \|f\|_{C^{0,1}(\K_1)} + \|g\|_{C^3_2(\partial^p\K_1)}+\norm{\psi}_\infty)$  and a random variable $\ms Y=\ms Y(\omega, q,d,\kappa)$ with $\mb E[\exp(c\ms Y^{d+1})]<\infty$ such that for all $R>0$, the solution $u$ of \eqref{eq:para-dirich} satisfies
\[
\max_{K_R}\left|u(x,n)-\bar u\left(\tfrac xR,\tfrac n{R^2}\right)\right|\le C(1+\ms Y R^{-q/(d+1)})R^{-\beta},
\]
where $\bar u$ is the solution of the effective equation \eqref{eq:effective-para} with $\bar a=E_\Q[\omega/\tr\omega]>0$, $\bar b=E_\Q[1/\tr\omega]$ and $\bar\psi=E_\Q[\psi/\tr\omega]$. 
In particular,
\[
\mb P\left(\max_{K_R}\left|u(x,n)-\bar u\left(\tfrac xR,\tfrac n{R^2}\right)\right|\ge 2CR^{-\beta}\right)
\le 
C\exp(-c R^q).
\]
\end{theorem}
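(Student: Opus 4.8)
The plan is to carry out the parabolic version of the scheme natural for the elliptic Theorem~\ref{thm:homog-nondiv}: combine a Feynman--Kac representation of the discrete solution with a Taylor expansion of the macroscopic solution $\bar u$, and feed the resulting error along the walk into the quantitative ergodic estimate of Theorem~\ref{thm:quan-ergo}. First I would represent $u$, the solution of \eqref{eq:para-dirich}, as a Feynman--Kac expectation over a \emph{lazy space--time random walk} $(X_i,n+i)_{i\ge 0}$ whose one-step kernel is exactly the operator on the left of \eqref{eq:para-dirich} (here the two-sided bound $\kappa I\le\omega\le\kappa^{-1}I$, and not just the lower bound, is used, to keep under control the time-change inherent in the laziness):
\[
u(x,n)=E_\omega\big[g(\text{exit point})\big]-\tfrac1{R^2}E_\omega\Big[\textstyle\sum_{i=0}^{\tau-1}f\big(\tfrac{X_i}{R},\tfrac{n+i}{R^2}\big)\psi(\bar\omega^i)\Big],
\]
where $\tau$ is the exit time from $K_R$; a convenient feature of the parabolic case is that $\tau\le\lceil R^2\rceil$ \emph{deterministically}, so no large-deviation control of $\tau$ is needed. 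On the PDE side, $\tfrac12\tr(\bar a D^2)+\bar b\,\partial_t$ is uniformly parabolic ($\bar a>0$ by Theorem~\ref{thm:QCLT}, $\bar b>0$ since $\tr\omega$ is bounded), so parabolic Schauder estimates give: with $f\in C^{0,1}(\K_1)$ and $g\in C^3_2(\partial^p\K_1)$, the solution $\bar u$ of \eqref{eq:effective-para} is $C^{2,\alpha}$ in space and $C^{1,\alpha/2}$ in time away from the space--time corner $\partial\B_1\times\{1\}$, with norms bounded by $\|f\|_{C^{0,1}(\K_1)}+\|g\|_{C^3_2(\partial^p\K_1)}$; near the corner only the boundary modulus of continuity of $\bar u$ is retained.

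Next I would apply Dynkin's formula for the walk to the test function $(x,n)\mapsto\bar u(x/R,n/R^2)$, stopped at $\tau$, and subtract it from the display above. This writes $u(x,n)-\bar u(x/R,n/R^2)$ as a boundary term $E_\omega[(u-\bar u)(\text{exit})]$ plus a bulk term $E_\omega[\sum_{i<\tau}\mathcal E(X_i,n+i)]$, where $\mathcal E$ is the local mismatch between the discrete operator applied to $\bar u(\cdot/R,\cdot/R^2)$ and $R^{-2}f\psi$. The boundary term is $O(R^{-\delta})$ for some $\delta>0$: on $\partial^pK_R$ one has $u=g$ exactly, while $\bar u(\cdot/R,\cdot/R^2)$ agrees with $g$ there up to the boundary modulus of continuity of $\bar u$ at scale $1/R$ (plus an $O(R^{-2})$ error from the rounding in $\lceil R^2\rceil$). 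For the bulk term, a second-order Taylor expansion away from the boundary gives
\[
\mathcal E(x,n)=\tfrac{\tr\omega(x)}{R^2}\,h_z(\theta_x\omega)+O\!\big(R^{-2-\alpha}\big),\qquad z=(x/R,\,n/R^2),
\]
where, as a function of $\omega(0)$,
\[
h_z(\omega)=\tfrac12\big\langle\tfrac{\omega(0)}{\tr\omega(0)}-\bar a,\;D^2\bar u(z)\big\rangle+\big(\tfrac1{\tr\omega(0)}-\bar b\big)\partial_t\bar u(z)-\big(\tfrac{\psi}{\tr\omega(0)}-\bar\psi\big)f(z);
\]
since $\bar u$ solves \eqref{eq:effective-para} with $\bar a=E_\Q[\omega/\tr\omega]$, $\bar b=E_\Q[1/\tr\omega]$, $\bar\psi=E_\Q[\psi/\tr\omega]$, we have $E_\Q[h_z]=0$ for every fixed $z$. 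Absorbing the geometric holding times of the lazy walk (which cancel the $\tr\omega$ prefactor and turn the weighted sum over lazy steps into an ordinary Birkhoff-type sum over steps of the balanced walk), and summing the $O(R^{-2-\alpha})$ remainder over the $\le R^2$ steps to get $O(R^{-\alpha})$, the task reduces to estimating $E_\omega\big[\sum_{j<T}h_{z_j}(\bar\omega^j)\big]$ for a stopping time $T\lesssim R^2$ and bounded functions $h_z$ of $\omega(0)$ that are Hölder in $z$ and have zero $\Q$-mean.

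This is exactly what Theorem~\ref{thm:quan-ergo} controls. If $h_z$ were constant in $z$, the bound $|E_\omega[\sum_{j<T}h(\bar\omega^j)]|\le C\norm{h}_\infty R^{2-2\alpha}$ off an event of $\mb P$-probability $\le C\exp(-cR^p)$ would be immediate. To handle the variation I would partition $\bar\K_1$ into cells of a small scale $\rho$, replace $h_z$ on each cell $Q$ by its value $h_{z_Q}$ at the centre (incurring an error $O(\rho^\gamma)$ by Hölder continuity), use the strong Markov property to cut the trajectory into its successive excursions through each cell, apply Theorem~\ref{thm:quan-ergo} on each piece, and sum the contributions over the cells with a union bound --- this requires a local-time estimate for how often the walk re-enters a given cell. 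The $\rho R$-collar of $\partial^pK_R$ (including the corner, where $\bar u$ need not be $C^2$) is handled by a barrier argument: one dominates $|u-\bar u(\cdot/R,\cdot/R^2)|$ there by a super-/sub-solution of the discrete equation that is $O(R^{-\beta}+\rho^\gamma)$ on $\partial^pK_R$, built from the boundary regularity of $\bar u$ and the exit estimates available for uniformly elliptic balanced walks. Optimizing $\rho=\rho(R)$ to balance the ergodic, Hölder and Taylor errors yields the rate $R^{-\beta}$; collecting the exceptional events over dyadic scales $R$ and over the relevant space--time cells produces the random variable $\ms Y$ with $\mb E[\exp(c\ms Y^{d+1})]<\infty$ --- the exponent $d+1$, one more than the $d$ of the elliptic Theorem~\ref{thm:homog-nondiv}, reflecting the extra time direction in the coarsened space--time domain --- and the final probability bound is the corresponding tail estimate.

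The hardest step, as in the elliptic case, is precisely this last one: upgrading Theorem~\ref{thm:quan-ergo}, which bounds a \emph{single} Birkhoff sum run up to a stopping time, to a bound on the \emph{cumulative} Dynkin error whose integrand $h_z$ drifts with $X_i$. Making the chaining close with an algebraic rate forces the right choice of coarsening scale $\rho(R)$, a handle on the number of times the walk re-enters each cell, and the reabsorption of the polynomial-in-$R$ union-bound loss into the stretched-exponential tail --- which is what constrains $q<d$. By comparison the genuinely parabolic features are mild: the deterministic bound $\tau\le\lceil R^2\rceil$ actually simplifies the exit analysis, while the reduced regularity of $\bar u$ at the space--time corner is absorbed by the barrier step.
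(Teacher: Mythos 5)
Your overall scaffolding (lazy space--time walk, Feynman--Kac representation, a Taylor expansion producing a mean-zero local error $h_z$, good/bad points, exponential moments for the number of bad points, and the exponent $d+1$ coming from the space--time volume) matches the paper's, and your treatment of the boundary term is essentially the paper's Steps 1--2 (barriers $u_\pm$, $v_\pm$ on $\partial^pK_{R+1}$ plus comparison, then freezing the right-hand side). But at the central step you take a genuinely different route, and it is exactly there that the argument has a gap. You propose to bound the cumulative Dynkin error $E_\omega\big[\sum_{i<\tau}h_{z_i}(\bar\omega^i)\big]$ by coarsening space--time into cells of scale $\rho$, cutting the trajectory into excursions, applying Theorem~\ref{thm:quan-ergo} to each piece, and summing; you yourself flag that this needs a local-time estimate for the number of re-entries into each cell and a way to absorb the cells on which the quantitative ergodicity fails. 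Neither is supplied, and neither is routine: Theorem~\ref{thm:quan-ergo} is a high-probability statement, not an almost-sure one, so a single exceptional cell visited by the walk contributes an error of order $\norm{h}_\infty$ times the excursion length, which must be weighed against the (random) number and placement of such cells along the path; and the number of excursions of the spatial walk through a fixed cell within one time-slab is not controlled anywhere in the paper's toolbox. The "hardest step" you identify is left entirely open.

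The paper closes this step by a maximum-principle argument that avoids following the trajectory altogether. It introduces mesoscopic jumps $\sigma=\sigma(R^\gamma)$ and shows that at every good point the perturbed error $w=v+C_1R^{-\alpha\gamma-2}E^{\hat x}_\omega[\tau_R]$ satisfies $E^{\hat x}_\omega[w(\hat Y_\sigma)-w(\hat x)]<0$, hence has \emph{empty} parabolic subdifferential there, as in \eqref{eq: emptysubdiff-para}; only one application of the quantitative ergodicity per good point, over a single excursion of duration $R^{2\gamma}$, is ever needed. The parabolic ABP inequality (Theorem~\ref{thm:para-abp}) then converts the measure of $\ms D w(K_R)$ --- controlled purely by the \emph{count} $\ms B_R$ of bad points, raised to the power $1/(d+1)$ --- into the sup-norm bound. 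This is precisely why the paper needs no local-time estimates, no chaining over cells, and no separate treatment of exceptional cells: bad points enter only through their number, never through the time the walk spends near them. To complete your route you would have to prove the excursion-counting and exceptional-cell estimates yourself; alternatively, replacing your final chaining step by the empty-subdifferential/ABP argument closes the proof with the ingredients you already have (your $h_z$ with $E_\Q[h_z]=0$ is exactly the paper's frozen-coefficient error in Step 3, modulo the $\hat\Q$ versus $\Q$ and $1+\tr\omega$ versus $\tr\omega$ bookkeeping of \eqref{eq:RN-der}, which you gloss over but which is harmless).
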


In the PDE setting, qualitative results for the homogenization of linear non-divergence form operators were first obtained by Papanicolaou, Varadhan \cite{PV-82}, and Yurinskii \cite{Yur-82}. 
Qualitative results in fully nonlinear setting was obtained by Caffarelli, Souganidis,  and Wang \cite{CSW-05}. 
In terms of quantitative results, Yurinski derived a second moment estimate of the homogenization error in \cite{Yur-88} for linear elliptic case, and Caffarelli, Souganidis \cite{CaSu10} proved a logarithmic convergence rate for the nonlinear elliptic case.  
Afterwards,  Armstrong, Smart \cite{AS-14}, and Lin, Smart \cite{LS-15} achieved an algebraic convergence rate for fully nonlinear elliptic equations, and fully nonlinear parabolic equations, respectively. 
Armstrong, Lin \cite{AL-17} obtained quantitative estimates for the approximate corrector problems. 
Note that apart from our parabolic result (Theorem~\ref{thm:homog-parab}) being discrete, there are two main differences between our case  and the case considered in \cite{LS-15}. The first is that our environment is not time-dependent. The second is that our environment measure does not decorrelate in time as assumed in \cite{LS-15}. 
We remark that there are other quantitative stochastic homogenization results in non-reversible RWRE settings which are different from ours, see e.g.  \cite{BrKu, SzZe, BoZe,BaBo, Baur}, to name a few.

Our work 
is inspired by Armstrong, Smart \cite{AS-14}, and Berger, Cohen, Deuschel, and Guo \cite{BCDG-18}. Specifically,  our Theorem \ref{thm:quan-ergo} can be viewed as a discrete version of the result of Armstrong, Smart \cite{AS-14} in the PDE setting, and our proof depends heavily on the idea  of  \cite{AS-14} which obtained the algebraic rate by investigating sub-additive structure of the convexity of solutions.

Before proceeding with the proofs of the main results, we give here an  outline of the structure of the rest of the paper.

In Section \ref{sec:quan-ergo} we 
quantify the ergodicity of the environment from the point of view of the particle by
proving a quantitative homogenization result (Proposition~\ref{prop:conv_rate_evfpvp}) for a special case of the elliptic problem \eqref{eq:elliptic-dirich} when $f \equiv 1$, $g\equiv 0$.
To this end, we control the homogenization error with 
a subadditive quantity $\mu_n(0)$  introduced by Armstrong, Smart \cite{AS-14}
that measures the convexity of super-solutions in boxes with sidelength $3^n$. 
The main task is to obtain exponential decay for moments of $\mu_n(0)$. A key observation is that it suffices to have a lower bound and appropriate upper bounds in ``nearly homogenized" scales for small perturbations $\mu_n(s)$ of $\mu_n(0)$ for some $s>0$. 
  
To get the lower bound, using an idea of Berger \cite{B-per}, 
we show that the homogenization error  grows subquadratically
by using the ergodicity of the environment viewed from the particle $(\bar\omega_n)$ (see \eqref{def:omegabar}).
In fact, this argument, which is robust, will be later used to quantify the convergence rates in Theorems~\ref{thm:homog-nondiv} and \ref{thm:homog-parab}.

The upper bound is achieved by comparing super-solutions that are ``convex at most points" to a paraboloid. This idea appeared in \cite{Ca-90, M-15} for Monge-Amp\`ere equations, and then was used in \cite{AS-14} for the homogenization of non-divergence form PDEs.  
By exploiting the geometry of the subdifferential set, we obtain what we believe a geometrically clearer (and much shorter) proof compared to the proof of a similar bound in \cite{AS-14}. (See Theorem~\ref{thm:190225}, which does the job of Lemmas 3.1, 3.2, 3.3, Corollary 3.4 and part of the proof of Lemma 4.1 in \cite{AS-14}.) Furthermore, with the upper bound in ``nearly homogenized" scales, we deduce the upper bound (Theorem~\ref{thm:conv_rate_subdiff}) of the subdifferential and avoid complicated induction arguments as in the proof of \cite[Theorem 2.9]{AS-14}.
Note that apart from these technical differences,  our strategy follows closely that of  \cite{AS-14}.

Section~\ref{sec:quan-RWRE} is devoted to the proof of the quantitative QCLT results  Theorems \ref{thm:quan-ergo} and \ref{thm:BE} for the RWRE.
In Section \ref{sec:quan-ellip-para}, 
using quantitative versions of Berger's  argument \cite{B-per} (cf. Lemma~\ref{lem:conv_rate_evfpvp}), we obtain algebraic rates of homogenization for both elliptic and parabolic difference operators (Theorems \ref{thm:homog-nondiv} and \ref{thm:homog-parab}) via quantifying precisely how long it takes the RWRE to behave like a Brownian motion.
Roughly speaking, we will use a random walk up to time $n\ll R^2$ to explore the ``flatness" of the error $h_R(x)=u(x)-\bar u(\tfrac xR)$.
Taking the simpler case $\psi\equiv 1$ in the elliptic problem \eqref{eq:elliptic-dirich} for example,  $E_\omega[u(X_n)-u(0)]\stackrel{\eqref{eq:FK}}{=}\tfrac{1}{R^2}E_\omega[\sum_{k=0}^{n-1}f(\tfrac{X_k}R)]\approx \tfrac{n}{2R^2}\tr\bar a D^2\bar u(0)$. Moreover, by Taylor expansion, 
\[
E_\omega[\bar u(\tfrac{X_n}{R})-\bar u(0)]\approx E_\omega[\tfrac 1{R}X_n\cdot D\bar u(0)+\tfrac1{2R^2}X_n^T D^2\bar u(0)X_n]=\tfrac1{2R^2}E_\omega[\tr(X_n X_n^TD^2\bar u(0))],
\] 
where  $X_n^T$ is the transpose of the column vector $X_n$. Thus, $E_\omega[h_R(X_n)-h_R(0)]\approx\tfrac{n}{2R^2}\tr [(M_n-\bar a)D^2\bar u(0)]$, where $M^{(n)}=E_\omega[X_nX_n^T]/n$ is the covariance matrix of the rescaled random walk $X_n/\sqrt n$. In other words, to measure how flat the homogenization error is, it suffices to control $(M^{(n)}-\bar a)$ which is the difference between the diffusivity of a large scale RWRE and the diffusion matrix $\bar a$ of the limiting Brownian motion.

\section{Quantification of ergodicity of the environment viewed from the particle
}\label{sec:quan-ergo}

For $\omega \in \Omega$, define the operator $L_\omega$ by
\begin{equation}\label{eq:def-of-L}
L_\omega u(x)=\sum_y\omega(x,y)[u(y)-u(x)]=\frac{1}{2\tr\omega(x)}\tr(\omega(x)\nabla^2 u).
\end{equation}
Fix a bounded   local function $\psi$. With a slight abuse of notation we also use $\psi$ to denote the function on $\Z^d$ defined by 
\begin{equation}\label{def:psi}
\psi(x)=\psi_\omega(x)=:\psi(\theta_x\omega).
\end{equation}
For any finite subset $B\subset\Z^d$, consider the Dirichlet problem
\begin{equation}\label{eq:corrector}
\left\{
\begin{array}{lr}
L_\omega\phi= \psi_\omega-E_{\Q}\psi \quad &\text{ in } B,\\
\phi|_{\partial B}=0.
\end{array}
\right.
\end{equation}

The purpose of this section is to obtain the following proposition which states that $\phi$ grows subquadratically in terms of the diameter of $B$.
\begin{proposition}\label{prop:conv_rate_evfpvp}
Assume (A1), (A2), (A3).
For any $p\in(0,d)$, there exists $\alpha=\alpha(d,\kappa,p, \dpd)>0$ such that 
for any $B\subset \square_R =\{x\in\Z^d: |x|_\infty< R/2\}$, the solution $\phi$ of 
\eqref{eq:corrector} 
satisfies
\[
\mathbb P\left(\max_{B}\frac{1}{R^2}|\phi|\ge C\norm{\psi}_\infty R^{-\alpha}\right)
\le  
C\exp(-cR^p).
\]
\end{proposition}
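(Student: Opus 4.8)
The plan is to control $\phi$ via a subadditive quantity à la Armstrong--Smart. For a cube $\square = z + \square_{3^n}$, let $\mu(\square)$ (more precisely $\mu_n(z)$, and perturbed versions $\mu_n(z,s)$ for $s \ge 0$) be defined as the infimum over sublinear corrections of the ``defect from convexity'' of $L_\omega$-supersolutions inside $\square$, measured against paraboloids of opening $s$; this is the quantity that measures how much a supersolution of $L_\omega \phi \le \psi_\omega$ can fail to be touched from below by a paraboloid at an interior point. The two basic facts I need are (a) \emph{subadditivity}: $\mu_{n+1}(z)$ is bounded by an average of the $\mu_n$ over the $3^d$ subcubes, up to a controlled error coming from the inhomogeneous right-hand side $\psi_\omega - E_{\mathbb Q}\psi$; and (b) a \emph{deterministic comparison principle} translating a small value of $\mu$ on a cube of side $R$ into the bound $\max_B |\phi| \le C \|\psi\|_\infty R^2 \mu^{\text{(something)}}$ via the ABP/maximum-principle estimate for non-divergence difference operators. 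Step (b) is where the factor $R^2$ and the exponent $\alpha$ enter: if $\mu_n$ decays like $3^{-n\delta}$ then $\phi$ grows like $R^{2-\delta}$, i.e.\ subquadratically, which is exactly the claim with $\alpha = \delta$.

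The heart of the argument is establishing exponential-in-$R^p$ concentration of $\mu_n(0)$ near its limit. Following the outline in the excerpt, I would (i) use Birkhoff/ergodicity of $\mathbb Q$ (Theorem~\ref{thm:QCLT}) together with Berger's argument to get, with high probability, a \emph{lower bound}: the homogenization error — equivalently $R^2 \mu$ evaluated on the true corrector — cannot be too small, so $\mu_n(s)$ is bounded below for the relevant small $s$; (ii) establish \emph{upper bounds} for $\mu_n(s)$ in ``nearly homogenized'' scales by the paraboloid-comparison argument, comparing a supersolution that is convex at most points to a single paraboloid and using the geometry of the subdifferential set; (iii) invoke the key observation (the analogue of Lemma~\ref{lem:no-induction}) that a lower bound plus upper bounds at these scales already force geometric decay of $\mu_n(0)$, bypassing a full multi-scale induction; (iv) upgrade to exponential concentration by a bounded-difference / Azuma-type or finite-range martingale argument over the $3^d$ subcubes at each scale, which is why the i.i.d.\ (or finite-range) structure of $\mathbb P$ is used and why the exponent $R^p$ for $p<d$ (rather than $R^d$) appears — one loses a little in the number of scales one can control simultaneously.

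Finally I would assemble: choose $n$ with $3^n \asymp R$, so that $\square_R \supset B$; the concentration estimate gives $\mu_n(0) \le C R^{-\alpha}$ off an event of probability $\le C e^{-cR^p}$; the deterministic comparison (b), applied on $\square_R$ with Dirichlet data $0$ on $\partial B$ and right-hand side $\psi_\omega - E_{\mathbb Q}\psi$ (whose size is controlled by $\|\psi\|_\infty$), then yields $\max_B R^{-2}|\phi| \le C \|\psi\|_\infty R^{-\alpha}$ on the complementary event. A minor technical point is handling arbitrary $B \subset \square_R$ rather than $B = \square_R$ itself: this is no obstruction because extending $\psi_\omega - E_{\mathbb Q}\psi$ and solving on the larger cube, then using the maximum principle, dominates the solution on $B$; one just needs the ABP estimate to localize correctly to $B$.

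I expect the main obstacle to be step (ii)–(iii): proving the upper bounds on $\mu_n(s)$ without the Lipschitz-in-$s$ property of $\mu$ that is available in the continuous divergence-form or the $\psi = \operatorname{tr}\omega$ setting of \cite{AS-14}. The excerpt explicitly flags that this Lipschitz property is not known here even for linear equations, so the paraboloid-comparison must be done by hand using the discrete subdifferential geometry, and one must be careful that the error terms from the discrete Hessian and from the fluctuating right-hand side $\psi_\omega - E_{\mathbb Q}\psi$ are absorbed at the right scale — this is the delicate quantitative bookkeeping that makes $\alpha$ depend on $(d,\kappa,p)$.
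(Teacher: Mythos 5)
Your proposal is correct and follows essentially the same route as the paper: the Armstrong--Smart subadditive quantity $\mu_n(s)$ (measured here via volumes of subdifferential sets), the ABP estimate converting $\mu_n$ into the bound $\max|\phi|/R^2\le C\mu_n^{1/d}$, the lower bound from ergodicity via Berger's argument, the upper bound from the paraboloid/cone comparison, the ``no-induction'' lemma combining the two into exponential decay of $\mathbb E[\mu_n(0)^2]$, and Hoeffding concentration over i.i.d.\ subcubes (which is exactly where the restriction $p<d$ arises). The only cosmetic differences are that the exponent comes out as $\alpha=\delta/d$ rather than $\delta$, and the localization to general $B\subset\square_R$ is done in the paper via the strong Markov identity $\phi_B(x)=\phi_{Q_m}(x)-E_\omega^x[\phi_{Q_m}(X_{\tau_B})]$, which is the precise form of the maximum-principle step you sketch.
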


Notice that if we let $\tau=\tau_B=\inf\{n\ge 0: X_n\notin B\}$ be the exit time from $B$. Then the solution $\phi=\phi_{B,\psi}$ of \eqref{eq:corrector} can be expressed as
\[
\phi(x)=- E_\omega^x \left[\sum_{i=0}^{\tau-1}(\psi(\bar\omega^i)-E_{\Q}\psi)\right], \quad x\in \bar B.
\]
Moreover, since $|X_n|^2-n$ is a martingale we have that the expected exit time
\begin{equation}\label{eq:exittime}
 E_\omega^x[ \tau ] = E_\omega^x\left[|X_{\tau}|^2\right] - |x|^2
\end{equation}
is at most $C R^2$ if $B \subset \square_R$.

\subsection{Measuring the convexity of solutions}

To obtain bounds for $\phi$, we use a discrete version of the classical Alexandrov-Bakelman-Pucci (ABP) estimate to control functions with their {\it subdifferentials}.
In this subsection, we will define the subdifferential set and discuss some of its basic properties that will be used in the rest of the paper.

\begin{definition}
For $B\subset\Z^d$, we define for $x\in B$, $u\in\R^{\bar B}$,  the {\it sub-differential} set 
\[
\partial u(x;B)=\left\{p\in\R^d: u(x)-p\cdot x\le u(y)-p\cdot y \text{ for all }y\in\bar B\right\}\subset\R^d.
\]
For any $A\subset B$, let  
\[
\partial u(A;B)=\bigcup_{x\in A}\partial u(x;B).
\] 
We  write $\partial u(B;B)$  simply as $\partial u(B)$.
\end{definition}

\begin{lemma}[ABP inequality]\label{lem:abp}
Let $E\subset\Z^d$ be a finite connected subset, and let ${\rm diam}(\bar E)=\max\{|x-y|:x,y\in \bar E\}$ be the diameter of $E$. There exists a constant $C=C(d)$ such that for any function $u$ on $\bar E$, we have
\[
\min_{\partial E}u\le\min_E u+C{\rm diam}(\bar E)|\partial u(E)|^{1/d}.
\]
Here, for  $U\subset\R^d$,  $|U|$ denotes the Lebesgue measure of $U$.
\end{lemma}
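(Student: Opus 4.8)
The plan is to carry out the standard Alexandrov--Bakelman--Pucci argument in its discrete incarnation; its entire content reduces to one elementary geometric observation, namely that a Euclidean ball of radius of order $\big(\min_{\partial E}u-\min_E u\big)/\diam(\bar E)$ is contained in $\partial u(E)$. First I would normalize. Adding a constant to $u$ changes neither $\partial u(x;E)$ (hence $\partial u(E)$) nor $\diam(\bar E)$ nor the difference $\min_{\partial E}u-\min_E u$, so I may assume $\min_{\partial E}u=0$; then the asserted inequality reads $0\le\min_E u+C\diam(\bar E)|\partial u(E)|^{1/d}$, which holds trivially unless $m:=-\min_E u>0$. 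So assume $m>0$ and fix $x_0\in E$ with $u(x_0)=-m$.

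The key step is the claim $\B_{m/\diam(\bar E)}\subseteq\partial u(E)$. Fix $p\in\R^d$ with $|p|<m/\diam(\bar E)$ and consider the function $y\mapsto u(y)-p\cdot y$ on the finite set $\bar E$, which attains its minimum at some $x^\ast\in\bar E$; by the definition of the subdifferential, once we know $x^\ast\in E$ it follows that $p\in\partial u(x^\ast;E)\subseteq\partial u(E)$. So it remains to exclude the possibility that the minimum is attained on $\partial E$: for every $z\in\partial E$, using $u(z)\ge 0$, $u(x_0)=-m$, $|x_0-z|\le\diam(\bar E)$ and $|p|\,\diam(\bar E)<m$,
\[
\left(u(z)-p\cdot z\right)-\left(u(x_0)-p\cdot x_0\right)\ \ge\ m+p\cdot(x_0-z)\ \ge\ m-|p|\,\diam(\bar E)\ >\ 0,
\]
so $x_0\in\bar E$ strictly beats $z$ and the minimizer $x^\ast$ must lie in $E$. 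This proves the claim.

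Passing to Lebesgue measures (note $\partial u(E)$ is a finite union of closed convex polyhedra, hence closed and measurable), the inclusion gives $|\partial u(E)|\ge|\B_{m/\diam(\bar E)}|=\omega_d\big(m/\diam(\bar E)\big)^d$ with $\omega_d:=|\B_1|$, whence $m\le\omega_d^{-1/d}\,\diam(\bar E)\,|\partial u(E)|^{1/d}$; recalling that $m=\min_{\partial E}u-\min_E u$ (a quantity unaffected by the normalization), this is exactly the assertion with $C=C(d)=\omega_d^{-1/d}$. I do not expect a genuine obstacle here: connectedness of $E$ is not actually used (it is assumed only because the applications have it), and the only points deserving a line of care are the reduction to $\min_{\partial E}u=0$ and the measurability of $\partial u(E)$. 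If one prefers the inclusion to involve a closed ball, one can instead take $|p|\le m/(2\diam(\bar E))$ at the cost of replacing $\omega_d^{-1/d}$ by $2\omega_d^{-1/d}$; the open-ball version already suffices.
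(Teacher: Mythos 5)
Your proof is correct and follows essentially the same route as the paper's: both reduce to showing that the open ball of radius $\left(\min_{\partial E}u-\min_E u\right)/\mathrm{diam}(\bar E)$ is contained in $\partial u(E)$ and then compare Lebesgue measures. Your write-up merely adds a few explicit details (the normalization, the location of the minimizer of $u(y)-p\cdot y$, measurability) that the paper leaves implicit.
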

\begin{proof}
Without loss of generality, assume that 
\[M:=\min_{\partial E}u-\min_E u>0,\]
and $u(x_0)=\min_E u$ for some $x_0\in E$. Then, for any $p\in\R^d$ such that
\[
|p| < M/{\rm diam}(\bar E),
\]
we have
\[
u(y)-u(x_0)\ge M > p\cdot(y-x_0)\quad\text{ for all }y\in\partial E.
\]
Thus the minimum of $u(x)-p\cdot x$ is achieved in $E$, and hence, $p\in \partial u(E)$. Therefore, 
\[
\B_{M/{\rm diam}(\bar E)}\subset \partial u(E),
\]
and the lemma follows.
\end{proof}

In our setting, the volume of the subdifferential set is used to measure the convexity of the function. 
For $u\in\R^{\bar B}$, 
let $\breve{u}:\R^d\to\R$ (called the {\it convex envelope} of $u$) denote
the biggest convex function that is smaller than $u$.  That is, 
\begin{equation}\label{eq:def-envelope}
\breve u(x)=\breve u(x;B)=\sup\left\{\ell(x): \ell\text{ is affine and $\ell\le u$ in }\bar B\right\}.
\end{equation}
 Notice that the convex envelope $\breve u$ is defined over the whole $\R^d$.
Here are some basic facts about the subdifferential. See the book of Caffarelli, Cabr\'e \cite{CaCa} for more details in the continuous setting.
\begin{enumerate}
\item The volume $|\partial u(x; B)|$ of a subdifferential set is preserved by  affine translations. That is, letting $\tilde u(x):=u(x)+a\cdot x+b$, then $\partial \tilde{u}(x, B)=\partial u(x; B)+a$ is only a translation of $\partial u(x; B)$, and therefore, the volume is preserved. 
\item If $u(x)=\breve u(x)$, then $\partial\breve u(x;B)=\partial u(x; B)$.   If $u(x)\neq \breve u(x)$, then $|\partial \breve{u}(x; B)|=0$.  Hence, $|\partial u(A;B)|=|\partial{\breve u}(A;B)|$ for any $A\subset B$.
\item The intersection of subdifferentials at different points has Lebesgue measure 0. That is,  $|\partial u(x;B)\cap\partial u(y;B)|=0$ if $x\neq y$. So for $A\subset B$,
\[
\Abs{\partial u(A;B)}
=\sum_{x\in A}|\partial u(x;B)|.\]
\item For any convex function $w:\R^d\to\R$ and any convex set $A\subset B\subset \R^d$, we have $\partial w(A; B)=\partial w(A)$.
\item The volume of the subdifferentials has upper bound in terms of the non-divergence form difference operator as following.
\end{enumerate}

\begin{lemma}\label{lem:subdiff-ub}
For any $x\in B$ with $\partial u(x;B)\neq\emptyset$, we have 
\[
|\partial u(x;B)|\le (2 L_\omega u(x)/\kappa)_+^d.
\]
In particular, if $L_\omega u(x) \le \ell/2$ in $B$, then, with $\#B$ denoting the cardinality of $B$,
\[
|\partial u(B)|\le (\ell_+/\kappa)^d \#B.
\]
\end{lemma}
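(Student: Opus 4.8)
The plan is to trap $\partial u(x;B)$ inside an explicit coordinate box whose side lengths are the one-dimensional second differences of $u$ at $x$, and then convert the volume of that box into a bound in terms of $L_\omega u(x)$ via the arithmetic--geometric mean inequality and uniform ellipticity. First I would take $p\in\partial u(x;B)$ and test the defining inequality $u(x)-p\cdot x\le u(y)-p\cdot y$ at the lattice neighbours $y=x\pm e_i$; these lie in $\bar B$ because $x\in B$ (if $x+e_i\notin B$ then $x+e_i\in\partial B$ by the definition of the discrete boundary), and the two inequalities combine to give $u(x)-u(x-e_i)\le p_i\le u(x+e_i)-u(x)$ for each $i$. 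Hence $\partial u(x;B)$ is contained in the box $\prod_{i=1}^d[\,u(x)-u(x-e_i),\,u(x+e_i)-u(x)\,]$, whose $i$-th edge has length exactly $\nabla_i^2 u(x)$. In particular, when $\partial u(x;B)\neq\emptyset$ every $\nabla_i^2 u(x)$ is nonnegative and $|\partial u(x;B)|\le\prod_{i=1}^d\nabla_i^2 u(x)$.

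Next I would express $L_\omega u(x)=\sum_{i=1}^d\omega(x,x+e_i)\,\nabla_i^2 u(x)$ with weights $\omega(x,x+e_i)=\omega_i(x)/(2\tr\omega(x))\ge\kappa$, the lower bound coming from the standing assumption $\omega/\tr\omega\ge 2\kappa I$. Since all the second differences are nonnegative, this gives $L_\omega u(x)\ge\kappa\sum_i\nabla_i^2 u(x)\ge\kappa d\bigl(\prod_i\nabla_i^2 u(x)\bigr)^{1/d}$ by AM--GM, so $\bigl(\prod_i\nabla_i^2 u(x)\bigr)^{1/d}\le L_\omega u(x)/(\kappa d)\le 2L_\omega u(x)/\kappa$; since the left-hand side is nonnegative I may replace the right-hand side by its positive part and raise to the $d$-th power, which yields $|\partial u(x;B)|\le(2L_\omega u(x)/\kappa)_+^d$. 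For the final assertion, I would use that subdifferentials at distinct points overlap only on a Lebesgue-null set (property (3) in the list above) to write $|\partial u(B)|=\sum_{x\in B}|\partial u(x;B)|$; under the hypothesis $L_\omega u\le\ell/2$ on $B$ each nonempty term is at most $(2L_\omega u(x)/\kappa)_+^d\le(\ell_+/\kappa)^d$, and summing over the $|B|$ points of $B$ gives the claimed bound $(\ell_+/\kappa)^d|B|$.

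There is no genuine obstacle here; the entire content is the first observation --- that the subdifferential at a point sits inside the coordinate box cut out by the discrete second differences --- after which the estimate is AM--GM plus bookkeeping. The only points needing a moment's care are checking that the neighbours $x\pm e_i$ really lie in $\bar B$, and noting that $L_\omega u(x)$ is automatically $\ge0$ once $\partial u(x;B)\neq\emptyset$, so passing to positive parts costs nothing.
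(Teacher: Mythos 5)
Your proof is correct and follows essentially the same route as the paper: both arguments test the subgradient inequality at the neighbours $y=x\pm e_i$ to trap $\partial u(x;B)$ in a coordinate box and then invoke the ellipticity bound $\omega(x,x\pm e_i)\ge\kappa$. The only (harmless) difference is in how the box's volume is estimated --- the paper first translates so that $0\in\partial u(x;B)$ and bounds each coordinate by $|p_i|\le L_\omega u(x)/\kappa$, whereas you keep the asymmetric box with side lengths $\nabla_i^2u(x)$ and apply AM--GM, which in fact yields the slightly sharper constant $(L_\omega u(x)/(\kappa d))^d$ before relaxing to the stated $(2L_\omega u(x)/\kappa)_+^d$.
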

\begin{proof}
For $x\in B$ such that $\partial u(x;B)\neq\emptyset$, 
up to an affine translation, we may assume
\[
u(x)=0 \quad\text{ and }\quad 0\in\partial u(x;B).
\]
We will show that 
\[
\partial u(x;B)\subset \left[-(L_\omega u(x))_+/\kappa,( L_\omega u(x))_+/\kappa\right]^d.
\]
Indeed, for any $p\in\partial u(x;B)$, by the definition of the subdifferential set,
\[
u(x\pm e_i)-u(x)\ge\pm p\cdot e_i \quad \text{for all }i=1,\ldots,d.
\]
Moreover, since $0\in\partial u(x;B)$, we have $u(y)\ge u(x)$ for all $y\in\bar B$. Hence, by uniform ellipticity, we conclude that for every $i=1,\ldots, d$,
\[
 L_\omega u(x)=\sum_{e:|e|=1}\omega(x,x+e)[u(x+e)-u(x)]\ge \kappa |p\cdot e_i|.
\]
So, clearly, $L_\omega u(x) \leq 0$ implies $|\partial u(x;B)|=0$.

Let us now consider the case that $L_\omega u(x)>0$.
By scaling, we may assume that $L_\omega u(x)=1$. 
By the above inequality, $p\in[-1/\kappa, 1/\kappa]^d$ for every $p\in\partial u(x;B)$. Hence, $\partial u(x;B)\subset [-1/\kappa, 1/\kappa]^d$, and
the lemma follows.
\end{proof}

For $r>0$, let $\square_r:=\{x\in\Z^d: |x|_\infty< r/2\}$ denote the cube of side-length $r$ centered at the origin, and
\begin{equation}\label{eq:def-qn}
Q_n:=\square_{3^n}, \quad
R_n:=3^n.
\end{equation}
Note that $\#Q_n=3^{nd}$,where $\#A$ is the cardinality of a set $A$.
For each $n\in\N$, we divide $\Z^d$ into disjoint triadic cubes 
$\{y+Q_n: y\in 3^n\Z^d\}$,
among which we let $Q_n(x)$ denote the triadic cube that contains $x\in\Z^d$. 
\begin{definition} \label{def:parameter-s}
Assume that $\psi$ satisfies (A4).  Let $s\in\R$ and $B\subset\Z^d$. 
\begin{enumerate}
\item 
Recall \eqref{def:psi} and
define the sets of {\it super-solutions}
\[
S(s; B):=\left\{u\in\R^{\bar B}: L_\omega u(x) \leq s + \psi_\omega(x) \quad \forall x\in B\right\},
\]
\[
S^*(s; B):=\left\{u\in\R^{\bar B}: L_\omega u(x) \leq s - \psi_\omega(x) \quad \forall x\in B\right\}.
\]
Let the ``exact" solutions be
\begin{align*}
\mc E(s;B)&=\left\{u\in\R^{\bar B}: L_\omega u(x)  = s + \psi_\omega(x)\quad \forall x\in B\right\}, \\
\mc E^*(s;B)&=\left\{u\in\R^{\bar B}: L_\omega u(x) = s - \psi_\omega(x)\quad \forall x\in B\right\}.
\end{align*}
When $B=Q_n$, the above sets are written as $S_n(s), S_n^*(s), \mc E_n(s), \mc E_n^*(s)$, respectively.

\item 
Recall the cardinality notation $\#$ below \eqref{eq:def-qn}. 
Define
\[
\mu(s;B)=\frac1{\# B}\sup_{u\in S(s;B)}|\partial u(B)|, \quad
\mu^*(s;B)=\frac1{\# B}\sup_{u\in S^*(s;B)}|\partial u(B)|.
\]
When $B=Q_n$, the above quantities are written as $\mu_n(s), \mu^*_n(s)$, respectively.
Note that by Assumptions  (A1), (A3) and formula \eqref{eq:FK},  $\mu_n(s)$ and $\mu^*_n(s)$ are independent (under $\mb P$) of the environments $\{\omega(x):|x|>\tfrac{3^n}{2}+2\dpd\}$.
\end{enumerate}
\end{definition}
We remark that in the definition of $\mu_n(s)$, the set $S_n(s)$ can be replaced by $\mc E_n(s)$.
Indeed, if $u \in S_n(s)$, $v \in \mathcal{E}_n(s)$ with $v=u$ on $\partial Q_n$, then since $L_\omega u(x) \leq L_\omega v(x)$ in $Q_n$ it follows from the comparison principle that $v \leq u$ in $Q_n$. 
Therefore, $\partial u(Q_n)\subset \partial v(Q_n)$, 
and so 
\[
\sup_{u\in S_n(s)}|\partial u(Q_n)|=\sup_{u\in\mc E_n(s)}|\partial u(Q_n)|.
\]
Moreover, by Lemma~\ref{lem:subdiff-ub} and the definition of $\mu_n$, for $n\in\N$ and $s\in\R$,
\begin{equation}
\label{eq:190130}
\mu_n(s)\le 2^d \left[(2\norm{\psi}_\infty+s)_+/\kappa\right]^d.
\end{equation}
Similar inequality holds also for $\mu_n^*(s)$.
\begin{lemma}\label{lem:var}
 Assume (A1), (A2), (A4). Recall $\dpd$ in Assumption (A1).
\begin{enumerate}[(a)]
\item\label{item:var} 
 For all $m\ge4\dpd$,  $n\in\mathbb N$, $s\in\R$,
\[
\mb E[\mu_{m+n}^2(s)]\le 2R_n^{-d}\var[\mu_m(s)]+\mb E[\mu_m(s)]^2.
\]
\item \label{item:mono}
$\mb E[\mu_{n}(s)]$ and $\mb E[\mu_{n}(s)^2]$ are both non-increasing in $n$ for $n\ge 4\dpd$, and non-decreasing in $s$.
\item\label{item:limit-mu} Set $\mu_\infty(s):=\lim_{n\to\infty}\mb E[\mu_n(s)]$. Then $\varlimsup_{n\to\infty}\mu_n(s)=\mu_\infty$, $\mb P$-a.s., and 
\[
\lim_{n\to\infty}\mu_n(s)=\mu_\infty(s) \quad\text{ in }L^2(\mb P).
\]
\end{enumerate}
The same statements are true for $\mu_n^*(s)$ and $\mu_\infty^*(s):=\lim_{n\to\infty}\mb E[\mu_n^*(s)]$.
\end{lemma}
\begin{proof}
Clearly, both $\mb E[\mu_{n}(s)]$ and $\mb E[\mu_{n}(s)^2]$ are non-decreasing in $s$, since the set $S_n(s)$ in the definition of $\mu_n(s)$ is non-decreasing in $s$. 
The value of $s$ in $\mu_n(s)$ is irrelevant in the rest of the proof, and hence sometimes omitted.

Denote by $\{Q_m^i: 1\le i\le\# Q_n\}$ the collection of disjoint $m$-level sub-boxes   of $Q_{m+n}$. Let $\mu_m^{(i)}=\mu_m^{(i)}(s)=\sup_{u\in S(s;Q_m^i)}|\partial u(Q_m^i)|/\#Q_m$. 
Note that for any $u\in S_{m+n}(s)$ and $x\in Q_m^i$, $\partial u(x; Q_{m+n})\subset\partial u(x;Q_m^i)$ and so
\begin{equation}\label{eq:181030}
\mu_{m+n}\le \sum_{1\le i\le\#Q_n}\mu^{(i)}_m\big/\#Q_n.
\end{equation}
 Since $3^m\ge 4\dpd$ and by Assumption (A1), 
$\{\mu_m^{(i)}:1\le i\le\#Q_n\}$ 
is a 1-dependent sequence in the sense that $\mu_m^{(i)}$ is independent of $\mu_m^{(j)}$ as long as $Q_m^j$  is not adjacent to $Q_m^i$.  Hence, we have a decomposition of the index set 
\begin{equation}\label{eq:decomposition}
\{1,2,\ldots,\#Q_n\}=\Lambda_1\cup\Lambda_2
\end{equation}
with $\Lambda_1\cap\Lambda_2=\emptyset$ and $\#\Lambda_1,\#\Lambda_2>\#Q_n/3$ such that $\{\mu^{(i)}:i\in\Lambda_k\}, k=1,2,$ are both sets of independent random variables.
 Taking the first and second moments of both sides in \eqref{eq:181030}, we imply
$\mb E[\mu_{m+n}]\le \mb E[\mu_m]$ 
and
\begin{align*}
\mb E[\mu_{m+n}^2]
&\le 
\mb E\left[\bigg(
\sum_{1\le i\le\#Q_n}(\mu_m^{(i)}-\mb E[\mu_m])+\#Q_n\mb E[\mu_m]
\bigg)^2\right]\bigg/\#Q_n^2\\
&=
\mb E\left[\bigg(
\sum_{i\in\Lambda_1}(\mu_m^{(i)}-\mb E[\mu_m])
+\sum_{i\in\Lambda_2}(\mu_m^{(i)}-\mb E[\mu_m])
\bigg)^2\right]\bigg/\#Q_n^2+\mb E[\mu_m]^2
\\
&\le 
2[\#\Lambda_1 \var(\mu_m)+\#\Lambda_2 \var(\mu_m)]/\#Q_n^2+\mb E[\mu_m]^2\\
&=2\var(\mu_m)/\#Q_n+\mb E[\mu_m]^2.
\end{align*}
We thus obtain \eqref{item:var}, and that $\mb E[\mu_n]$ is non-increasing in $n$. Moreover, \eqref{item:var} also implies
\[
\mb E[\mu_{m+1}^2]-\mb E[\mu_m^2]\le (2\times 3^{-d}-1)\var[\mu_m]\le 0.
\]
Thus $\mb E[\mu_n^2]$ is also non-increasing in $n$ for $n\ge 4\dpd$.

 To prove \eqref{item:limit-mu}, 
sending first $n\to\infty$ and then $m\to\infty$ in 
\eqref{eq:181030}, and by the law of large numbers, we get $\varlimsup_{n\to\infty}\mu_n\le\mu_\infty$ almost surely. Moreover, by Fatou's lemma and the fact \eqref{eq:190130} that $\mu_n$'s are uniformly bounded from above, we get $\mb E[\varlimsup_{n\to\infty} \mu_n]\ge \mu_\infty$,  and so $\varlimsup_{n\to\infty} \mu_n=\mu_\infty$, $\mb P$-a.s.. To prove the $L^2$ convergence,  by Fatou's lemma we get $\lim_{n\to\infty}\mb E[\mu_n^2]\le\mu_\infty^2$.  Note that by \eqref{item:mono},  $\mb E[(\mu_n-\mu_\infty)^2]\le \mb E[\mu_n^2]-\mu_\infty^2$ for $n\ge 4\dpd$. Taking $n\to\infty$ on both sides we obtain $\lim_{n\to\infty}\mb E[(\mu_n-\mu_\infty)^2]\le 0$. 
Therefore, \eqref{item:limit-mu} is proved.
\end{proof}

By the ABP  inequality (Lemma~\ref{lem:abp}),  for all $u\in S_n(s)$ and $ -v\in S_n^*(s)$,
\begin{equation}\label{eq:u-mu-min}
-\min_{Q_n}u\le -\min_{\partial Q_n}u+CR_n^2\mu_n(s)^{1/d},
\end{equation}
\begin{equation}\label{eq:u-mu-max}
\max_{Q_n}v\le \max_{\partial Q_n}v+CR_n^2\mu_n^*(s)^{1/d}.
\end{equation}

\subsection{Lower bound of the convexity} \label{sec:lowerbound}
The goal of this subsection is to obtain lower bounds for the subdifferentials (Corollary~\ref{cor:mu-lowerb}). 
To this end,  we show a weak version (Lemma~\ref{lem:conv_rate_evfpvp}) of the quantitative result (Proposition~\ref{prop:conv_rate_evfpvp})  using an argument of Berger  \cite{B-per} that we learned from him through personal communications. 
Roughly speaking, due to the ergodicity of the environment viewed from the particle $(\bar\omega_n)_{n\in\N}$, the random walk behaves like a Brownian motion in the long run.  
 Hence, the homogenization error is rather flat in large scale where the flatness can be measured by subdifferential sets. Of course, how close the large scale random walk is to the Brownian motion depends on locally how ``good" the environment is. The 
 finite-range dependence of the environment enables us to say that with high probability, a large proportion of the environments are good.
Similar arguments can also be found in \cite[Theorem~1.4]{BCDG-18}.

\begin{lemma}\label{lem:conv_rate_evfpvp}
Assume (A1), (A2), (A3).
For any $\epsilon>0$, there exist constants  $C_\epsilon, m_\epsilon$ depending on $(\mb P,\epsilon,  \dpd)$ such that for all $m>m_\epsilon$  and $\mb P$-almost all $\omega$, the solution $\phi=\phi_m$ of \eqref{eq:corrector} in the cube $Q_m=\square_{3^m}$
satisfies 
\[
\mb P\left(\max_{Q_m}|\phi_m|/R_m^2\le \norm{\psi}_\infty\epsilon\right)\ge 1- e^{-C_\epsilon R_m^d}.
\] 
\end{lemma}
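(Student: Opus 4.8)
The plan is to realize the solution $\phi_m$ of \eqref{eq:corrector} probabilistically as $\phi_m(x) = -E_\omega^x[\sum_{i=0}^{\tau-1}(\psi(\bar\omega^i) - E_\Q\psi)]$, where $\tau = \tau_{Q_m}$ is the exit time from $Q_m$, and to control the right-hand side by splitting the path of the random walk into blocks. The key mechanism is Birkhoff's ergodic theorem applied to the environment process under $\Q\times P_\omega$: the ergodic average $\frac1N \sum_{i=0}^{N-1}\psi(\bar\omega^i) \to E_\Q\psi$ $\Q\times P_\omega$-a.s. Since $\Q \approx \mb P$ (Theorem~\ref{thm:QCLT}(i)), this convergence also holds $\mb P\times P_\omega$-a.s. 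First I would fix a large integer $\ell$ and use \eqref{eq:exittime}, namely $E_\omega^x[\tau] \le C R_m^2$, together with a maximal-type estimate (or simply Markov's inequality applied after conditioning on blocks of length $\sim \epsilon^2 R_m^2$, or on reaching a mesoscopic scale $3^k$ with $3^k \ll 3^m$) to reduce the estimate on $\frac1{R_m^2} E_\omega^x[\sum_{i=0}^{\tau-1}(\psi(\bar\omega^i) - E_\Q\psi)]$ to controlling, uniformly over starting points $x$ in a mesoscopic grid, the quantity $\frac1{3^{2k}} E_\omega^x[\sum_{i=0}^{\sigma-1}(\psi(\bar\omega^i) - E_\Q\psi)]$, where $\sigma$ is the exit time from a box of side $3^k$ around $x$.

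The heart of the argument is then a "good environment" definition at mesoscopic scale $3^k$: a point $x$ is \emph{good} if the random walk started at $x$ and run until it leaves $x + Q_k$ has ergodic average of $\psi$ along its path within $\epsilon/2$ of $E_\Q\psi$, uniformly in the relevant sense (this is an event measurable with respect to $\{\omega(y): y \in x + Q_{k+1}\}$, hence of finite range). By the $L^1$ ergodic theorem and the qualitative convergence in \eqref{eq:birkhoff}, one can choose $k = k(\epsilon)$ large enough that $\mb P(x \text{ is good}) \ge 1 - \delta(\epsilon)$ with $\delta(\epsilon)$ as small as we like. Because the environment is i.i.d., the events $\{x \text{ is good}\}$ for $x$ ranging over a sublattice of spacing $\gtrsim 3^{k+1}$ are independent, so a standard large-deviation / Chernoff bound gives that, with $\mb P$-probability at least $1 - e^{-C_\epsilon R_m^d}$, at least a $(1-2\delta)$-fraction of the mesoscopic subboxes of $Q_m$ are good. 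On the good event, I would run the random walk from any $x \in Q_m$, accumulating the (small) error $\le \epsilon/2$ per good mesoscopic box it traverses, and absorbing the bad boxes using the crude bound $\|\psi\|_\infty$ times the expected number of bad boxes visited before exiting $Q_m$ — and here one uses that the expected time in bad boxes is a small fraction of $R_m^2$ because bad boxes are sparse and the walk is (via the ABP/maximum-principle or the martingale $|X_n|^2 - n$) diffusive, so it does not linger disproportionately in the bad region. Combining, $\frac1{R_m^2}\max_{Q_m}|\phi_m| \le \epsilon$ on this high-probability event once $m > m_\epsilon$.

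The main obstacle, I expect, is the last step: controlling the contribution of the \emph{bad} mesoscopic boxes to $E_\omega^x[\sum_{i=0}^{\tau-1}|\psi(\bar\omega^i) - E_\Q\psi|]$. It is not enough that bad boxes are a small fraction of $Q_m$; one needs that the walk spends a correspondingly small \emph{expected} amount of time in them, and this requires a quantitative equidistribution or "no lingering" estimate for the balanced walk — e.g. an $L^d$-type bound on the Green's function / expected local time coming from the ABP inequality (Lemma~\ref{lem:abp}) applied to suitable test functions, showing $E_\omega^x[\#\{i < \tau : X_i \in \text{bad set}\}] \le C R_m^2 \cdot (\text{fraction of bad sites})^{1/d}$ or similar. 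This is precisely the place where Berger's argument (as attributed in the text) and the i.i.d. structure are combined, and making the dependence of $m_\epsilon$ and $C_\epsilon$ on $\epsilon$ explicit-enough to feed into the later bootstrap (Lemma~\ref{lem:no-induction} and Proposition~\ref{prop:conv_rate_evfpvp}) is the technically delicate part; the rest is the routine block decomposition plus a Chernoff bound.
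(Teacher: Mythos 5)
Your outline is correct in its first two stages (ergodic theorem at a mesoscopic scale to define good points, then a Chernoff/large-deviation bound for the density of good points over an i.i.d.\ sublattice), and these match the paper's proof; where you diverge is in how the good/bad decomposition is converted into a bound on $\phi_m$. You work directly with the representation $\phi_m(x)=-E_\omega^x[\sum_{i<\tau}(\psi(\bar\omega^i)-E_\Q\psi)]$ and a block decomposition of the path, which forces you to confront the occupation time of the bad set --- the obstacle you correctly flag. That obstacle is in fact resolvable with tools already in the paper: applying Lemma~\ref{lem:abp} together with Lemma~\ref{lem:subdiff-ub} to the solution of $L_\omega u=-\mathbbm{1}_{\mathrm{bad}}$ gives exactly your conjectured estimate, $E_\omega^x[\#\{i<\tau: X_i\in \mathrm{bad}\}]\le CR_m(\#\mathrm{bad})^{1/d}\le C\epsilon^{1/d}R_m^2$, so your route closes with the same $\epsilon^{1/d}$ loss the paper incurs. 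The paper avoids the occupation-time estimate altogether: it defines goodness via a \emph{fixed} time $n$ rather than a mesoscopic exit time, perturbs the solution to $v=\phi_m+2\epsilon E_\omega^x[\tau]\in\mc E_m(-2\epsilon)$, and observes that at a good point $E_\omega^x[v(x)-v(X_n)]>0$, which by the martingale property of $X_n$ forces $\partial v(x;Q_m)=\emptyset$; the total subdifferential volume is then bounded by $C$ times the number of bad points (however long the walk lingers there), and a single application of ABP to $v$ yields $\min_{Q_m}\phi_m/R_m^2\ge -C\epsilon^{1/d}$. Your approach is more transparently probabilistic and generalizes more readily to path functionals; the paper's is shorter and feeds directly into the subdifferential quantity $\mu_m(s)$ that drives the later bootstrap (it proves $\mu_m(-2\epsilon)\le Cn^{-1}+C\epsilon$ along the way, which is reused in Corollary~\ref{cor:mu-lowerb}). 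Either way, do note one technical point you should make explicit: the empty-subdifferential (or block) argument needs the walk to remain in $\bar Q_m$ over the relevant time window, which is why the paper restricts to $Q_{m,n}=\{x:\dist(x,\partial Q_m)>n\}$ and counts the boundary layer separately as $CnR_m^{d-1}$ points.
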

\begin{proof}
[Proof of Lemma~\ref{lem:conv_rate_evfpvp}]
Recall that $R_m=3^m$. 
Without loss of generality  we assume $\psi$ satisfies (A4). 
By Theorem~\ref{thm:QCLT} and the ergodic theorem, 
\[
\lim_{m\to\infty}\sum_{i=0}^{m-1}\psi(\bar\omega^i)/m=0
\quad\text{ $\mb P\times P_\omega$-a.s. and in $L^1(\mb P\times P_\omega)$. }
\]
 Hence,
for $\epsilon>0$, there exists $n=n(\epsilon,\mb P,  \dpd)>\dpd$ (which without loss of generality we can assume is larger than $\epsilon^{-1}$) such that 
\[
\mb P\left( \Abs{\frac1nE_\omega\left[\sum_{i=0}^{n-1} \psi(\bar\omega^i)\right]}\ge \epsilon\right)<\epsilon.\]
We say that a point $x\in\Z^d$ is {\it ($\epsilon$-)good} (and otherwise {\it bad}) if $\Abs{E_\omega^x[\sum_{i=0}^{n-1} \psi(\bar\omega^i)]/n}\le \epsilon$. Note that the event ``$x$ is bad" only depends on environments $\{\omega(y): y \in B_n(x)\}$ in ball $B_n(x)$.
Since  we assume (A1) and (A3),  the random varaibles $\{\mathbbm{1}_{x\text{ is bad}}: x\in\Z^d\}$ have a 
 range $2n+4\dpd<6n$ of dependence.
 When  $R_m>12n$, set $Q_{m,n}:=\{x\in Q_m: \dist(x,\partial Q_m)>6n\}$.  
Since one can decompose $Q_{m,n}$ into $(6n)^d$ subsets such that each subset consits of roughly $(R_m-12n)^d/(6n)^d$ points for which the random variables $\mathbbm 1_{x\text{ is bad}}$ are i.i.d., then Cram\'er's Theorem implies that 
\[
\mb P\left(\sum_{x\in Q_{m,n}}\mathbbm 1_{x\text{ is bad}}/\#Q_{m,n}\ge 2\epsilon \right)
\le 
Cn^de^{-c_\epsilon(R_{m}-2n)^d}\le Cn^de^{-C_\epsilon R_m^d}.
\]
We will show that for $R_m>n(\epsilon)^2$,
\begin{equation}\label{eq:181010}
\mb P(\mu_{m}(-2\epsilon)\le C\epsilon)\ge 1-Cn^de^{-C_\epsilon R_m^d}.
\end{equation}
Indeed, if $x\in Q_{m,n}$ is good, then for any $u\in\mc E_m(-2\epsilon)$,
\begin{align*}
 E^x_\omega[u(x)-u(X_n)]
&= E^x_\omega\left[\sum_{i=0}^{n-1}\left( - \psi(\bar\omega^i)+2\epsilon\right)\right]\\
&\ge 
-\epsilon n+2\epsilon n>0
\end{align*}
which implies $\partial u(x;Q_m)=\emptyset$. Hence, using the fact (Lemma~\ref{lem:subdiff-ub}) that $|\partial u(x;Q_m)|\le C$,  we have with $\mb P$-probability at least $1-n^de^{-C_\epsilon R_m^d}$,
\[
|\partial u(Q_m)|
\le 
C\#(Q_m\setminus Q_{m,n})+C\#\text{bad points in }Q_{m,n}\le  CnR_m^{d-1}+C\epsilon R_m^d
\]
and so $\mu_{m}(-2\epsilon)\le CnR_m^{-1}+C\epsilon\le Cn^{-1}+C\epsilon  \le C\epsilon$. This completes the proof of inequality \eqref{eq:181010}.

Let $\tau=\min\{k\ge 0: X_k\notin Q_m\}$ and set $v(x):=\phi_m(x)+2\epsilon E_\omega^x[\tau]\in\mc E_m(-2\epsilon)$. By \eqref{eq:u-mu-min} and \eqref{eq:181010},
$\mb P(\min_{Q_m}v/R_m^2\ge -C\epsilon^{1/d})\ge 1-Cn^de^{-C_\epsilon R_m^d}$. 
Recalling \eqref{eq:exittime} we have that $E_\omega^x[\tau] \le d R_m^2$. 
So $\min_{Q_m}\phi_m/R_m^2\ge \min_{Q_m}v/R_m^2-C\epsilon$ and
\[
\mb P\left(\min_{Q_m}\phi_m/R_m^2\ge -2C\epsilon^{1/d} \right)\ge 1-Cn^de^{-C_\epsilon R_m^d}.
\]
Similarly $\mb P(\max_{Q_m}\phi_m/R_m^2\le 2C\epsilon^{1/d})\ge 1-n^de^{-C_\epsilon R_m^d}$. The lemma is proved.
\end{proof}
\begin{remark}\label{rem:subquadratic}
It follows immediately from Lemma~\ref{lem:conv_rate_evfpvp} that
\begin{equation}\label{eq: a.s.-homo}
\lim_{m\to\infty}\max_{Q_m}|\phi_m|/R_m^2= 0
\end{equation}
$\mb P$-almost surely and in $L^p(\mb P)$ for any $p>0$. 
\end{remark}

By Lemma~\ref{lem:conv_rate_evfpvp},  the homogenization error is uniformly flat in large scale. Consequently, adding a bit of convexity to the random solution will bend the corresponding effective solution like a paraboloid. 

\begin{corollary}\label{cor:mu-lowerb}
Assume (A1), (A2), (A4).
For any $s>0$, there exists $C=C(d)$ such that
\[
\mu_\infty(s)\ge Cs^d \quad\text{and}\quad 
\mu_\infty^*(s)\ge Cs^d.
\]
\end{corollary}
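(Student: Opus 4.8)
The plan is to show that for $s > 0$, a suitably shifted version of the corrector $\phi_m$ lies in the super-solution class $\mathcal{E}_m(-2\epsilon + s)$ and, by Lemma~\ref{lem:conv_rate_evfpvp}, is nearly flat at scale $R_m$; then the added convexity of magnitude $s$ forces the convex envelope to look like a paraboloid, whose subdifferential has volume of order $s^d$. Concretely, fix $\epsilon \in (0,s/2)$ and consider, on $Q_m$, the function
\[
w(x) := \phi_m(x) + \left(s - 2\epsilon\right)E_\omega^x[\tau] + \tfrac{s}{2}|x|^2,
\]
where $\tau = \tau_{Q_m}$ and $\phi_m$ solves \eqref{eq:corrector}. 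Since $L_\omega E_\omega^x[\tau] = -1$ and $L_\omega(\tfrac12 |x|^2) = \tfrac{\tr\omega(x)}{2\tr\omega(x)}\cdot 2 \cdot \tfrac12 \cdot(\text{something})$ — more carefully, $L_\omega |x|^2 = \frac{1}{2\tr\omega(x)}\sum_i \omega_i(x)\cdot 2 = \frac{\tr\omega(x)}{\tr\omega(x)} = 1$ (using $\nabla_i^2(|x|^2) = 2$) — one checks that $L_\omega w = (\psi_\omega - E_\Q\psi) - (s-2\epsilon) + \tfrac{s}{2} \le -2\epsilon + \psi_\omega - E_\Q\psi + (\text{lower order})$ on $Q_m$, so that $w \in \mathcal{E}_m(-2\epsilon)$ up to adjusting the coefficient in front of $|x|^2$; the point is that $w$ differs from $\phi_m$ plus the exact convex paraboloid $\tfrac{s}{2}|x|^2$ only by a term of uniform size $O(R_m^2)$ that is controlled with high probability by Lemma~\ref{lem:conv_rate_evfpvp}.

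The key geometric step is this: on the event from Lemma~\ref{lem:conv_rate_evfpvp} (probability at least $1 - e^{-C_\epsilon R_m^d}$) we have $\max_{Q_m}|\phi_m| \le \epsilon R_m^2$, while the paraboloid $\tfrac{s}{2}|x|^2$ ranges over $[0, \tfrac{s}{8}R_m^2]$ on $Q_m$ and dominates its boundary behavior. Hence the convex envelope $\breve{w}$ of $w$ over $Q_m$ is squeezed between two paraboloids of opening comparable to $s$ (once $\epsilon$ is taken small relative to $s$), and its gradient image $\partial \breve{w}(Q_m) = \partial w(Q_m)$ contains a ball of radius $\sim c s R_m$. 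Dividing by the number of lattice points in the region where the envelope touches $w$ — which is a fixed fraction of $\#Q_m = R_m^d$ — gives $\mu_m(-2\epsilon + Cs) \ge c s^d$ with probability at least $1 - e^{-C_\epsilon R_m^d}$, for all large $m$. Since $\mu_m(\cdot)$ is bounded above by \eqref{eq:190130}, this event occurring with overwhelming probability forces $\mathbb{E}[\mu_m(-2\epsilon + Cs)] \ge c s^d - o(1)$, and letting $m \to \infty$ and using Lemma~\ref{lem:var}(c) together with $\epsilon \downarrow 0$ (and monotonicity of $s \mapsto \mu_\infty(s)$) yields $\mu_\infty(s) \ge c s^d$. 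The bound for $\mu_\infty^*(s)$ is identical with $\psi$ replaced by $-\psi$, using $S_m^*$ in place of $S_m$.

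The main obstacle I anticipate is the bookkeeping in the geometric step: one must verify that the convex envelope of $w$ over the \emph{discrete} cube $Q_m$ genuinely inherits a quadratic lower bound — i.e. that the flatness of $\phi_m$ (a statement about $\max|\phi_m|$) combined with the exact paraboloid $\tfrac{s}{2}|x|^2$ produces a subdifferential set of volume $\gtrsim s^d \#Q_m$, not merely $\gtrsim s^d R_m^{d-1}$ or something lossy. The clean way is to compare $\breve w$ directly to the convex envelope of $\tfrac{s}{2}|x|^2 - \epsilon R_m^2$ (a paraboloid restricted to $Q_m$, whose subdifferential over the cube is, by fact (4) about convex functions on convex sets, all of $\partial(\tfrac{s}{2}|x|^2)$ on an interior sub-cube $\sim \frac{1}{2}Q_m$, hence a ball of radius $\sim \tfrac{s}{4}R_m$), and to observe that adding the small perturbation $\phi_m$ with $\|\phi_m\|_\infty \le \epsilon R_m^2 \ll s R_m^2$ cannot shrink this gradient image by more than a constant factor. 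This avoids any delicate convex-analysis machinery (such as John's lemma), exactly as the authors advertise in the outline, and it is the step where one should be careful to keep all constants dimensional and independent of $m$, $s$, $\epsilon$.
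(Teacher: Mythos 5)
Your high-level plan --- perturb the corrector to inject convexity of order $s$, use the flatness of $\phi_m$ from Lemma~\ref{lem:conv_rate_evfpvp}, and read off a subdifferential of volume $\gtrsim s^d\,\#Q_m$ --- is the same as the paper's, but your construction has a sign error that is fatal. To land in $\mc E_m(s)$ with $s>0$ one must \emph{subtract} $sE_\omega^x[\tau]$ (since $L_\omega E_\omega^x[\tau]=-1$): the paper takes $u_m=\phi_m-sE_\omega^x[\tau]$, which vanishes on $\partial Q_m$ and dips to $u_m(0)\le\max_{Q_m}\phi_m-sE_\omega^0[\tau]\le(\epsilon-cs)R_m^2$, whereupon \eqref{eq:u-mu-min} gives $\mu_m(s)^{1/d}\ge cs-C\epsilon$ in one line. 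Your $w=\phi_m+(s-2\epsilon)E_\omega^x[\tau]+\tfrac s2|x|^2$ goes the other way: writing $E_\omega^x[\tau]=E_\omega^x[|X_\tau|^2]-|x|^2$ with $E_\omega^x[|X_\tau|^2]$ being $L_\omega$-harmonic, the quadratic parts combine into $-(\tfrac s2-2\epsilon)|x|^2$, a \emph{concave} paraboloid, and your own computation of $L_\omega w$ puts $w$ in $\mc E_m(-\tfrac s2+2\epsilon)\subset S_m(-2\epsilon)$ for $\epsilon\le s/8$. But \eqref{eq:181010} in the proof of Lemma~\ref{lem:conv_rate_evfpvp} shows precisely that such supersolutions have $|\partial w(Q_m)|/\#Q_m\le\mu_m(-2\epsilon)\le C\epsilon$ with overwhelming probability, so the lower bound $cs^d$ you claim for $w$ cannot hold. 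The underlying misconception is the assertion that $(s-2\epsilon)E_\omega^x[\tau]$ is a perturbation ``controlled with high probability by Lemma~\ref{lem:conv_rate_evfpvp}'': that lemma controls $\phi_m$ only, while $E_\omega^x[\tau]$ is deterministically of order $R_m^2$, so this term has exactly the same magnitude $\Theta(sR_m^2)$ as your paraboloid and the opposite convexity; it cannot be absorbed.

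Once the sign is corrected, the elaborate geometric step (squeezing the convex envelope between paraboloids so that the gradient image contains a ball of radius $\sim sR_m$) is also unnecessary and would in any case be hard to run as stated: even for the correct $u_m=\phi_m-sE_\omega^x[\tau]$, the harmonic part $-sE_\omega^x[|X_\tau|^2]$ oscillates by $\Theta(sR_m^2)$ over $Q_m$ and is not a sup-norm-small perturbation of a paraboloid, so ``adding the small perturbation cannot shrink the gradient image'' does not apply. The paper sidesteps all of this: because $u_m$ has exactly zero boundary data, the ABP inequality \eqref{eq:u-mu-min} converts the interior dip of depth $\gtrsim sR_m^2$ directly into $|\partial u_m(Q_m)|\ge(csR_m)^d$ without identifying the shape of the subdifferential set, and then \eqref{eq: a.s.-homo} together with $E_\omega^0[\tau(Q_m)]\ge cR_m^2$ finishes the argument as $m\to\infty$.
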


\begin{proof}
Let $\phi_n$ denote the solution of the Dirichlet problem \eqref{eq:corrector} in $Q_n$.

Let $u_n(x)=\phi_n(x)-sE_\omega^x[\tau]$, where $\tau=\tau(Q_n)$ denotes the exit time of the random walk from cube $Q_n$. 
Notice that $E_\omega^x[\tau]=E_\omega^x[|X_\tau|^2]-|x|^2$, and that $u_n\in\mc E_n(s)$ with $u_n=0$ in $\partial Q_n$. 
Furthermore, $\min_{Q_n}u_{n}\le\max_{Q_n}\phi_n-s E_\omega^0[\tau]$.
Hence,
\[
\mu_n(s)^{1/d}
\stackrel{\eqref{eq:u-mu-min}}{\ge} -C\min_{Q_n}u_n/R_n^2
\ge 
-C\max_{Q_n}\phi_n/R_n^2+Cs E_\omega^0[\tau(Q_{n})]/R_n^2.
\]
Taking $n\to\infty$, using \eqref{eq: a.s.-homo}, Lemma~\ref{lem:var}\eqref{item:limit-mu}, and the fact that 
\[
\lim_{n\to\infty}E_\omega^0[\tau(Q_{n})]/R_n^2\ge \lim_{n\to\infty}E_\omega^0[\tau(B_{R_n})]/R_n^2=1,
\]
 we obtain $\mu_\infty(s)\ge Cs^d$. The second inequality can be proved similarly.
\end{proof}

As a consequence of the lower bound,  we can deduce that the sequences $\mu_n, \mu_n^*$ will ``stabilize" at some point.
\begin{proposition}\label{prop:new}
Assume (A1), (A2), (A4). For any $\varepsilon>0$, there are constants $N, c_\varepsilon>0$ depending on $(\varepsilon,d,\kappa,\dpd)$ such that for any $\ell\ge N$ and $n\ge 2\ell$, there exists $m\le n-\ell$ such that for all $0\le j\le \ell$, 
\begin{align*}
\mb E\left[\left(\mu_{m+j}(e^{-c_\varepsilon n})-\mb E[\mu_{m+\ell}(e^{-c_\varepsilon n})]\right)^2\right]
&\le 
\varepsilon\mb E\big[\mu_{m+\ell}(e^{-c_\varepsilon n})\big]^2,\\
\mb E\left[\big(\mu^*_{m+j}(e^{-c_\varepsilon n})-\mb E[\mu^*_{m+\ell}(e^{-c_\varepsilon n})]\big)^2\right]&\le 
\varepsilon\mb E\big[\mu^*_{m+\ell}(e^{-c_\varepsilon n})\big]^2.
\end{align*}
\end{proposition}
\begin{proof}
Let $N\in\N$ be a constant to be determined.  Set, for $i\ge 0, n\ge 2N$,
\[
S_{i,n}=\ln\left(
\mb E[\mu_{2\ell i}^2(\delta^{-n/(4d)})]\cdot
\mb E[\mu_{2\ell i}^{*2}(\delta^{-n/(4d)})]\big/ C
\right),
\]
where $\delta:=1+\tfrac\varepsilon 2$. By Corollary~\ref{cor:mu-lowerb} and \eqref{eq:190130},
\[
\sum_{i=1}^{n-1}S_{i+1,n}-S_{i,n}=S_{n,n}-S_{1,n}\ge -\tfrac n 2\ln\delta-C.
\]
Hence there exists $1\le k<n$ such that (note that $N$ is sufficiently big and $n>N$),
\[
S_{k+1,n}-S_{k,n}\ge 
 -\tfrac{n}{2(n-1)}\ln\delta-\tfrac{C}{n-1}\ge -\ln\delta.
\]
For simplicity of notations, we write $\mu_i:=\mu_i(\delta^{-n/(4d)}), \mu_i^*:=\mu_i^*(\delta^{-n/(4d)})$.  Then the above inequality implies $\mb E[\mu_{2\ell k}^2]\cdot \mb E[\mu_{2\ell k}^{*2}]\le \delta \mb E[\mu_{2\ell(k+1)}^2]\cdot \mb E[\mu_{2\ell(k+1)}^{*2}]$.
By Lemma~\ref{lem:var}\eqref{item:mono}, since $\mb E[\mu_i^2], \mb E[\mu_i^{*2}]$ are non-increasing in $i$ for $i\ge N>4\dpd$,  we get
\begin{equation}
\label{eq:stabilize}
\mb E[\mu_{2\ell k}^2]\le \delta \mb E[\mu_{2\ell(k+1)}^2], \quad
\mb E[\mu_{2\ell k}^{*2}]\le \delta \mb E[\mu_{2\ell(k+1)}^{*2}].
\end{equation}
Now set $m=2\ell k\le n-2\ell$.  We have, for $0\le j\le\ell$,
\begin{align*}
\mb E[(\mu_{m+j}-\mb E[\mu_{m+\ell}])^2]
&\stackrel{Lemma~\ref{lem:var}\eqref{item:mono}}{\le}
\mb E[\mu^2_m]-\mb E[\mu_{m+\ell}]^2\\
&\stackrel{\eqref{eq:stabilize}}{\le}
\delta\mb E[\mu_{m+2\ell}^2]-\mb E[\mu_{m+\ell}]^2\\
&\stackrel{Lemma~\ref{lem:var}\eqref{item:var}}{\le}
\delta\left(2R_{\ell}^{-d}\var(\mu_{m+\ell})+\mb E[\mu_{m+\ell}]^2\right)-\mb E[\mu_{m+\ell}]^2.
\end{align*}
In particular, for $j=\ell$, this inequality yields
\[
\var(\mu_{m+\ell})\le \frac{\delta-1}{1-2\delta R_N^{-d}}\mb E[\mu_{m+\ell}]^2
\le 2(\delta-1)\mb E[\mu_{m+\ell}]^2.
\]
Finally, using the above two inequalities, we get, for $0\le j\le\ell$,
\[
\mb E[(\mu_{m+j}-\mb E[\mu_{m+\ell}])^2]
\le (\delta-1)(1+4\delta R_\ell^{-d})\mb E[\mu_{m+\ell}]^2
\le 2(\delta-1)\mb E[\mu_{m+\ell}]^2.
\]
Similarly, we obtain $\mb E[(\mu_{m+j}^*-\mb E[\mu_{m+\ell}^*])^2]\le 2(\delta-1)\mb E[\mu_{m+\ell}^*]^2$, $\forall 0\le j\le \ell$.
\end{proof}


\subsection{Upper bound of the convexity} \label{sec:upperbound}
The goal of this subsection is to obtain an upper bound (Theorem~\ref{thm:190225}) for $\mu_n(s)$ and $\mu_n^*(s)$   when the convexity of solutions in some smaller sub-cubes are stabilized. 

%

Theorem~\ref{thm:190225} will play a crucial role in establishing exponential upper bounds for $\mu_n(0), \mu_n^*(0)$.  It states that,  in a fixed environment,  if  the upper and lower bounds of the convexity of the perturbed solutions are comparable,  then the convexity has an algebraic bound in terms of $s$.
It does the job of Lemmas 3.1, 3.2, 3.3, Corollary 3.4 and part of the proof of Lemma 4.1 in \cite{AS-14}.
The differences between our proof and that of \cite{AS-14} are summarized in Remark~\ref{rmk:convexity}.  Note that these are technical improvements and simplifications. We strongly rely on the strategy of \cite{AS-14} where the bounds of the convexity are used to control the homogenization error.
\begin{theorem}\label{thm:190225}
Let $m\ge 0, s,a>0,\omega\in\Omega$. There exist constants $n_0\in\N$ and $\lambda\in(0,1)$ depending on $(d,\kappa)$ such that, assuming that for some $n\ge 2n_0$,
\begin{enumerate}[(i)]
\item\label{eq:190225-1}
$\mu_{m+n-n_0+1}(s)+\mu_{m+n-n_0+1}^*(s)\le 10a$;
\item\label{item:mostpoint} 
 there are non-negative functions $u\in \mc E_{m+n}(s), u^*\in \mc E_{m+n}^*(s)$ with $\min_{Q_{m+n-n_0}}u=\min_{Q_{m+n-n_0}}u^*=0$ and (Recall the notations $\#$ and $Q_n(x)$ under \eqref{eq:def-qn}.)
at least $(1-\lambda)\# Q_{m+n}$ points $x\in Q_{m+n}$ satisfy
\[
|\partial u(Q_m(x);Q_{m+n})|+|\partial u^*(Q_m(x);Q_{m+n})|\ge a \#Q_m,
\]
\end{enumerate}
then  $a\le Cs^d$.
\end{theorem}
\begin{lemma}\label{lem:190225}
 Let $m\ge 0,n\in\N, s,a>0$, and  $0\le k<n$. Assume that 
\[
\mu_{m+k+1}(s)+\mu_{m+k+1}^*(s)\le a.
\]
Then for any non-negative functions $u\in \mc E_{m+n}(s), u^*\in \mc E_{m+n}^*(s)$ with $\min_{Q_{m+k}}u=\min_{Q_{m+k}}u^*=0$, we have
\[
\max_{Q_{m+k}}(u+u^*)\le Ca^{1/d}R_{m+k}^2.
\]
\end{lemma}
\begin{proof}
Let $g, g^*: \bar Q_{m+k+1}\to\R$ be functions that solve
\[
\left\{
\begin{array}{lr}
L_\omega g=L_\omega g^*=0 &\text{ in }Q_{m+k+1},\\
g=u,\, g^*=u^* &\text{ on }\partial Q_{m+k+1}. 
\end{array}
\right.
\]
Note that $g,g^*$ are non-negative. Let $\tilde u=u-g\in \mc E_{m+k+1}(s)$. Then $\tilde{u}|_{\partial Q_{m+k+1}}=0$. By assumption, there exists $x_0\in Q_{m+k}$ with $u(x_0)=0$. By the Harnack inequality for non-divergence form difference operators (see \cite[Theorem~3.1]{KT-96}, and  also \cite[A.1.3]{G-12} for more detailed proof), 
\[
\max_{Q_{m+k}}g\le Cg(x_0)=-C\tilde{u}(x_0)\stackrel{\eqref{eq:u-mu-min}}{\le} Ca^{1/d}R_{m+k}^2.
\]
Similarly, we get $\max_{Q_{m+k}}g^*\le Ca^{1/d}R_{m+k}^2$. Hence
\[
\max_{Q_{m+k}}(g+g^*)\le Ca^{1/d}R_{m+k}^2.
\]
Setting $v=u+u^*-(g+g^*)$, we have $L_\omega v=2s$ in $Q_{m+k+1}$ and $v|_{\partial Q_{m+k+1}}=0$. Thus, in $Q_{m+k+1}$, we have $v\le 0$ and so  $u+u^*\le g+g^*$. The lemma follows.
\end{proof}
\begin{proof}[Proof of Theorem~\ref{thm:190225}]  
Recall the definition of $\breve{u}$ in \eqref{eq:def-envelope}. 
 Let $\breve{u}(x)=\breve{u}(x;Q_{m+n})$ and
\[
S:=\left\{x\in\R^d: \breve{u}(x)\le ha^{1/d}R_{m+n}^2\right\}.
\]
Since $\breve{u}$ is convex, $S$ is a convex set. 
Let $n_0\in\N$ be a constant to be determined.

First, we will show via contradiction that for $h:=R_{n_0}^{1/d-1}$ and $n\ge2 n_0$,
\begin{equation}\label{eq:190225-2}
\min_{\partial Q_{m+n}}u+\min_{\partial Q_{m+n}}u^*
\ge 
ha^{1/d}R_{m+n}^2.
\end{equation}
Indeed, if \eqref{eq:190225-2} fails, then there exists $x_1\in \partial Q_{m+n}\cap S$. Moreover, 
setting $n_1:=n-n_0 \ge n_0$,
by Lemma~\ref{lem:190225}, 
$\max_{Q_{m+n_1}}(u+u^*)\le Ca^{1/d}R_{m+n_1}^2=Ca^{1/d}R_{m+n}^2R_{n_0}^{-2}$. Thus $Q_{m+n_1}\subset S$ if $n_0$ is large enough. 
Hence, $S$ contains the convex hull of $x_1$ and $Q_{m+n_1}$. 
In particular, setting $e':=x_1/|x_1|$, $S$ contains the cone $\mc C$ with vertex $x_1$ and base $\left\{x: x\cdot e'=0, |x|\le R_{m+n_1}/2\right\}$. 
Now let
\[
\mc C'=\tfrac{x_1}2+\tfrac 12 (\mc C-\tfrac{x_1}2)\subset \mc C.
\]
\begin{figure}[ht]
\centering
 \includegraphics[width=0.6\textwidth]{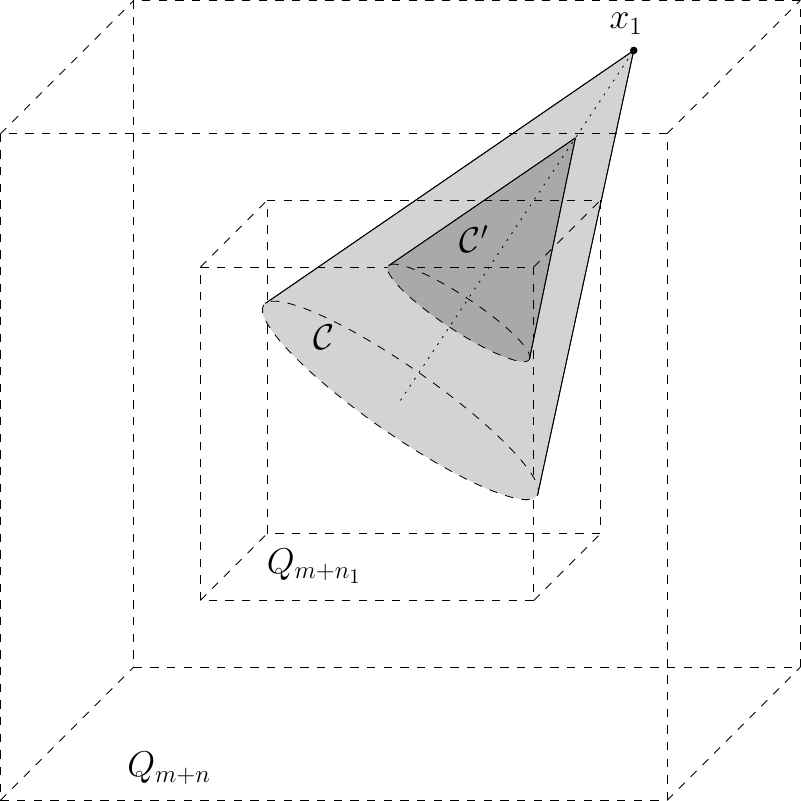}
\caption{Graphical description of the cones $\mc C, \mc C'$.}
\end{figure}
 Note that every point in $\mc C'$ is of distance at least $R_{m+n_1}/16$ away from the surface $\partial \mc C$ of $\mc C$. Hence, taking $n_0$ large enough, we have (Recall $Q_n(x)$ under \eqref{eq:def-qn}.)
\[
\mc C_m:=\bigcup_{x\in\mc C'\cap\Z^d}Q_m(x)\subset \mc C\cap\Z^d
\]
and that every point in $\mc C_m$ is at least of distance $R_{m+n_1}/16-\sqrt d R_m\ge R_{m+n_1}/32$ away from $\partial\mc C$. Notice also that $\dist(x_1,\mc C_m)\ge R_{m+n}/8-R_m\ge R_{m+n}/16$ and similarly $\dist(0,\mc C_m)\ge R_{m+n}/16$.
For any $e\in\R^d$ with $|e|=1$ and any $p\in \partial u(\mc C_m;Q_{m+n})$, say, $p\in \partial u(y;Q_{m+n})=\partial\breve{u}(y;Q_{m+n})$ for $y\in \mc C_m$, then $y\pm e R_{m+n_1}/32\in\mc C\subset S$, and hence,
\[
p\cdot (\pm e R_{m+n_1}/32)\le \breve{u}(y\pm e R_{m+n_1}/32)-\breve{u}(y)\le 
ha^{1/d}R_{m+n}^2.
\]
Moreover, $y\pm e'R_{m+n}/16\in\mc C$, and similar argument yields
\[
 p\cdot(\pm e' R_{m+n}/16) \le ha^{1/d}R_{m+n}^2.
\]
We conclude that $|p\cdot e|\le cha^{1/d}R_{m+n}R_{n_0}$ for all $|e|=1$, and $|p\cdot e'|\le cha^{1/d}R_{m+n}$. In other words, $\partial u(\mc C_m;Q_{m+n})$ is contained in a cylinder with height $cha^{1/d}R_{m+n}$ and base radius $cha^{1/d}R_{m+n}R_{n_0}$.  Hence
\[
|\partial u(\mc C_m;Q_{m+n})|\le Cah^d R_{m+n}^d R_{n_0}^{d-1}
\le CaR_{n_0}^{-1}\#\mc C_m,
\]
where in the last inequality we used $h=R_{n_0}^{1/d-1}$ and the fact that $\#\mc C_m\ge |\mc C'|\ge c|\mc C|\ge CR_{m+n}^d R_{n_0}^{-(d-1)}$.
Similar arguments yield the same upper bound for $u^*$.  Thus
\begin{equation}\label{eq:190225-3}
|\partial u(\mc C_m;Q_{m+n})|+|\partial u^*(\mc C_m;Q_{m+n})|
\le CaR_{n_0}^{-1}\#\mc C_m.
\end{equation}
On the other hand,  by \eqref{item:mostpoint},  choosing $\lambda=c_0R_{n_0}^{-(d-1)}$ where $c_0(d,\kappa)$ is a small constant so that
\[
(1-\lambda)\#Q_{m+n}\ge \#Q_{m+n}-\tfrac12\#\mc C_m,
\]
we get at least half of the points in $\mc C_m$ satisfying the inequality in \eqref{item:mostpoint}. Hence,
\begin{equation}\label{eq:190225-4}
|\partial u(\mc C_m;Q_{m+n})|+|\partial u^*(\mc C_m;Q_{m+n})|\ge \tfrac{1}{2}a\#\mc C_m.
\end{equation}
Combining \eqref{eq:190225-3} and \eqref{eq:190225-4}, we get  $Ca\le R_{n_0}^{-1}a$
which is absurd if $n_0$ is large enough. 
Display \eqref{eq:190225-2} is proved.

Finally, 
since $L_\omega(u+u^*)=2s$, by \eqref{eq:190225-2}
and \eqref{eq:u-mu-min}, 
\[
\min_{Q_{m+n}}(u+u^*)\ge ha^{1/d}R_{m+n}^2-CsR_{m+n}^2.
\]
On the other hand, by Lemma~\ref{lem:190225}, 
\[
\min_{Q_{m+n}}(u+u^*)\le \max_{Q_{m+n_1}}(u+u^*)\le Ca^{1/d}R_{m+n_1}^2.
\]
Therefore, 
$ha^{1/d}-Cs\le Ca^{1/d}R_{n_0}^{-2}$. 
Recalling $h=R_{n_0}^{1/d-1}$, for $n_0$ sufficiently large, we get $Ca^{1/d}\le s$.  Our proof is complete.
\end{proof}

\begin{remark}\label{rmk:convexity}
A key step in the above proof is to obtain \eqref{eq:190225-2}.
For this, we borrow some ideas in \cite[Lemma 3.1]{AS-14}, which states that if a function in a cube is quite convex locally at all points, then it either bends up on the whole boundary or bends up over a strip. 
See also earlier works \cite{Ca-90, M-15}.
Note that a key difference here between our Theorem \ref{thm:190225} and \cite[Lemma 3.1]{AS-14} is that we do not require the function to be quite locally convex at all points but only at a large portion of points (see assumptions \eqref{item:mostpoint} of Theorem~\ref{thm:190225}).

Besides, the proof  of \eqref{eq:190225-2} is done directly through the analysis of the convex set $S$, the cones $\mc C, \mc C'$, and the subdifferential set  $\partial u(\mc C_m;Q_{m+n})$.
Because of the clear geometry of the cones,  we do not need to use John's lemma (see \cite[Lemma 3.23]{LMT-17}), which says that
for any closed convex set $S$ with nonempty interior, there exists an affine map $\phi:\R^d\to\R^d$ such that $\bar{\B}_1\subset\phi(S)\subset \bar \B_d$.

Finally, unlike  \cite[Lemma 3.1]{AS-14},  the second situation that $u$ might bend up over a strip instead of the whole boundary is ruled out thanks to assumption \eqref{eq:190225-1}.
\end{remark}

\subsection{Quantification of ergodicity via the concentration of convexity}

This subsection is devoted to the proof of Proposition~\ref{prop:conv_rate_evfpvp}.  

As a key step,  we obtain an exponential decay (Theorem~\ref{thm:conv_rate_subdiff}) for the second moments of $\mu_n(0)$ and $\mu_n^*(0)$.  
To this end, we first use  Proposition~\ref{prop:new} to deduce that with positive probability, the convexity of the perturbed solutions stabilize at certain scale. This implies that the conditions of Theorem~\ref{thm:190225} are satisfied in certain environments. Then, by Theorem~\ref{thm:190225}, we get an exponential upper bound for the second moment of the convexity in triadic cubes.
\begin{theorem}
\label{thm:conv_rate_subdiff}
 Assume (A1), (A2), (A4).
There exist constants $C, c$ depending on $(d,\kappa,\dpd)$ such that, for all $k\ge 0$, 
\[
\mb E[\mu_k(0)^2+\mu_k^*(0)^2]\le Ce^{-ck}.
\]
\end{theorem}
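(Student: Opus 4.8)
The plan is to run a deterministic ``gap argument'' on the decreasing sequences $a_n := \mb E[\mu_n(s)^2]$ and $a_n^* := \mb E[\mu_n^*(s)^2]$, exploiting Lemma~\ref{lem:up-bd-mu} to show that once the variance stops decaying appreciably over a block of $n_0$ scales, the limiting value must already be of order $s^{2d}$; combined with Corollary~\ref{cor:mu-lowerb} this pins down the limit, and then the per-block contraction from Lemma~\ref{lem:var}(a) converts ``the limit is small'' into ``the value decays geometrically.'' Concretely, fix $s>0$ and set $b_n := a_n + a_n^*$, a non-increasing sequence bounded by $C(s)$ via \eqref{eq:190130}. Write $b_\infty := \lim_n b_n = \mu_\infty(s)^2 + \mu_\infty^*(s)^2$ (using Lemma~\ref{lem:var}(c)). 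First I would show $b_\infty \le C s^{2d}$: if not, then because $b_n \downarrow b_\infty$ there are infinitely many $m$ with $a_m^2 \le (1+\delta)\,\mb E[\mu_{m+n_0}(s)]^2$ and the starred analogue (here I use $\mb E[\mu_{m+n_0}(s)]^2 \ge \mb E[\mu_\infty(s)]^2$ monotonicity from Lemma~\ref{lem:var}(b), and that $a_m^2 \to b'$ for the appropriate limit, so the ratio $a_m / \mb E[\mu_{m+n_0}]^2 \to 1$); for such $m$, Lemma~\ref{lem:up-bd-mu} gives $b_{m+n_0} \le C s^{2d}$, hence $b_\infty \le C s^{2d}$, a contradiction if the constant in ``not'' is chosen larger.

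Next I would establish the matching lower bound $b_\infty \ge c s^{2d}$ — but in fact we do not need it for the decay; what we need is only the upper bound $\mu_\infty(s)^2 \le C s^{2d}$ and $\mu_\infty^*(s)^2 \le C s^{2d}$, i.e.\ $\mb E[\mu_n(s)] \le C s^d$ and $\mb E[\mu_n^*(s)] \le C s^d$ for all $n$. Now specialize: the theorem concerns $s=0$. For $s=0$ the functions $\mu_n(0)$ are still well-defined and non-negative, and $\mu_\infty(0) = \lim \mb E[\mu_n(0)]$. By monotonicity in $s$ (Lemma~\ref{lem:var}(b)) and the bound just obtained, $\mu_\infty(0) \le \mu_\infty(s) \le C s^d$ for every $s>0$, hence $\mu_\infty(0) = 0$; likewise $\mu_\infty^*(0)=0$. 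So $\mb E[\mu_n(0)] \to 0$ and $\mb E[\mu_n^*(0)] \to 0$.

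Finally I would upgrade $L^1$-convergence to zero into exponential decay of the second moment using the subadditive recursion. From Lemma~\ref{lem:var}(a) with $n=1$, $\mb E[\mu_{m+1}(0)^2] \le 3^{-d}\var[\mu_m(0)] + \mb E[\mu_m(0)]^2 = 3^{-d}\mb E[\mu_m(0)^2] + (1-3^{-d})\mb E[\mu_m(0)]^2$. Since $\mb E[\mu_m(0)]^2 \le \mb E[\mu_m(0)]\cdot \mb E[\mu_m(0)^2]^{1/2}\cdot(\text{bdd})$ — more cleanly, since $\mu_m(0)$ is bounded by a constant $K=K(d,\kappa)$ (by \eqref{eq:190130} with $s=0$), we have $\mb E[\mu_m(0)]^2 \le K\,\mb E[\mu_m(0)]$, and $\mb E[\mu_m(0)] \to 0$, so for $m \ge m_1$ large enough $\mb E[\mu_m(0)]^2 \le \tfrac12 (1-3^{-d})^{-1}\cdot\tfrac12 \,\theta^m$ is not quite the shape I want; instead I will argue directly: pick $m_1$ with $\mb E[\mu_{m_1}(0)] \le \eta$ small, and note $\mb E[\mu_m(0)]$ is non-increasing, so $\mb E[\mu_m(0)]^2 \le \eta\, \mb E[\mu_m(0)]$. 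Plugging into the recursion and iterating, $\mb E[\mu_{m+1}(0)^2] \le 3^{-d}\mb E[\mu_m(0)^2] + (1-3^{-d})\eta\,\mb E[\mu_m(0)]$; bounding $\mb E[\mu_m(0)] \le K$ crudely and choosing $\eta$ so that $(1-3^{-d})\eta K \le \tfrac12(1-3^{-d})\mb E[\mu_{m_1}(0)^2]$ is circular, so the clean route is: the recursion $x_{m+1} \le 3^{-d} x_m + (1-3^{-d}) y_m$ with $y_m = \mb E[\mu_m(0)]^2 \downarrow 0$ and $y_m \le 3^{-d}$-summable-free; here I instead use that $y_m$ itself decays geometrically — apply Lemma~\ref{lem:var}(a) to the \emph{first} moment? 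No: the cleanest is to combine both moments. Taking first moments in \eqref{eq:181030} with $n=1$ only gives $\mb E[\mu_{m+1}] \le \mb E[\mu_m]$, no contraction. The genuine input is Lemma~\ref{lem:up-bd-mu} applied at $s$ close to $0$ together with Lemma~\ref{lem:var}(a): once $\mb E[\mu_m(0)]$ and $\mb E[\mu_m^*(0)]$ are both below a threshold $\eta_0$ (which happens for all $m \ge m_1$), one runs the dichotomy of Lemma~\ref{lem:up-bd-mu} \emph{at scale $s=0$}: either $\mb E[\mu_m(0)^2] > (1+\delta)\mb E[\mu_{m+n_0}(0)]^2$, giving a multiplicative drop of the second moment by a factor controlled away from $1$ (since $\mb E[\mu_m(0)^2] \ge \mb E[\mu_m(0)]^2$ and the gap forces $\mb E[\mu_{m+n_0}(0)]^2 < (1+\delta)^{-1}\mb E[\mu_m(0)^2]$ — combined with Lemma~\ref{lem:var}(a) $\mb E[\mu_{m+n_0}(0)^2] \le 3^{-dn_0}\var[\mu_m(0)] + \mb E[\mu_m(0)]^2 \le 3^{-dn_0}\mb E[\mu_m(0)^2] + (1+\delta)^{-1}\mb E[\mu_m(0)^2]$, a strict contraction by a fixed factor $\rho = 3^{-dn_0} + (1+\delta)^{-1} < 1$), or else \eqref{eq:var<secmom} holds and Lemma~\ref{lem:up-bd-mu} forces $\mb E[\mu_{m+n_0}(0)^2] + \mb E[\mu_{m+n_0}^*(0)^2] \le C\cdot 0 = 0$. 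In either case $b_{m+n_0} \le \rho\, b_m$ for every $m \ge m_1$, which yields $b_n \le C e^{-cn}$ with $c = -\tfrac{1}{n_0}\log\rho$ and $C$ absorbing $b_{m_1} \le K$.

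I expect the main obstacle to be the bookkeeping that makes Lemma~\ref{lem:up-bd-mu} usable at (or in the limit $s \downarrow 0$ of) the degenerate value $s=0$: one must check that the hypothesis \eqref{eq:var<secmom} either fails — producing a clean geometric contraction of the second moments via Lemma~\ref{lem:var}(a) with the block size $n_0$ — or holds, in which case the conclusion $\mb E[\mu_{m+n_0}(0)^2]+\mb E[\mu_{m+n_0}^*(0)^2] \le C s^{2d}$ with $s \to 0$ collapses the moments to $0$. Making this dichotomy uniform in $m$ (so that $\rho$ and the threshold $\eta_0$ do not degrade with $m$) is the delicate point; everything else is the subadditive iteration already packaged in Lemma~\ref{lem:var}.
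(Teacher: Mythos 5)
Your overall strategy---playing Lemma~\ref{lem:var}(a) against Lemma~\ref{lem:up-bd-mu} in a dichotomy and iterating---is the right one and is essentially the paper's, and your first two steps (deducing $\mu_\infty(s)\le Cs^d$ for $s>0$ and hence $\mu_\infty(0)=0$) are sound. But the step that is supposed to produce the exponential \emph{rate} fails as written, for three concrete reasons. First, in the branch where \eqref{eq:var<secmom} fails you need an upper bound on $\mb E[\mu_m(0)]^2$ (the squared \emph{first} moment at the \emph{smaller} scale, which is the term appearing in Lemma~\ref{lem:var}(a)), but the failure of \eqref{eq:var<secmom} only gives $\mb E[\mu_{m+n_0}(0)]^2 < (1+\delta)^{-1}\mb E[\mu_m(0)^2]$, a bound on the first moment at the \emph{larger} scale; since $\mb E[\mu_m(0)]\ge \mb E[\mu_{m+n_0}(0)]$, your substitution $\mb E[\mu_m(0)]^2\le (1+\delta)^{-1}\mb E[\mu_m(0)^2]$ is unjustified. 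Second, even granting it, $\rho=3^{-dn_0}+(1+\delta)^{-1}<1$ would require $\delta/(1+\delta)>3^{-dn_0}$, whereas the $\delta$ supplied by Lemma~\ref{lem:up-bd-mu} is of order $3^{-dn_0-3}\epsilon^2\ll 3^{-dn_0}$ and cannot be enlarged at will; so in fact $\rho>1$ and there is no contraction. Third, Lemma~\ref{lem:up-bd-mu} is stated only for $s>0$, and invoking it ``at $s=0$'' would yield $\mb E[\mu_{m+n_0}(0)^2]=0$, i.e.\ $\mu_n(0)\equiv 0$ a.s.\ for large $n$ --- far stronger than the theorem and not something the lemma can deliver. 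You correctly flag this last point as the delicate one, but the proposal does not close it.

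The paper resolves all three issues simultaneously by (i) keeping $s=e^{-m}>0$ and tracking $m$ explicitly through the doubly indexed quantity $S_{n,m}=-\tfrac1{2d}\bigl(\ln\mb E[\mu_n(e^{-m})^2/C],\ln\mb E[\mu_n^*(e^{-m})^2/C]\bigr)$, the constraint $S_{n,m}\le m$ coming from Corollary~\ref{cor:mu-lowerb}; and (ii) introducing a \emph{second, larger} block length $n_1>n_0$: if the second moment fails to drop by a fixed factor $e^{-2d\alpha}$ between scales $n$ and $n+n_1$, then Lemma~\ref{lem:var}(a) applied over the gap $n_1-n_0$ gives $(1-e^{2d\alpha}R_{n_1-n_0}^{-d})\,\mb E[\mu_n^2]\le e^{2d\alpha}\,\mb E[\mu_{n+n_0}]^2$, and choosing $n_1$ large makes the prefactor beat the small $\delta$, so that \eqref{eq:var<secmom} holds and Lemma~\ref{lem:up-bd-mu} caps $\mb E[\mu_{n+n_0}(e^{-m})^2]$ by $Ce^{-2dm}$. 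The abstract Lemma~\ref{lem:no-induction} then converts this dichotomy into linear growth of $S_n=\lim_m S_{n,m}$, which is exactly the claimed exponential decay of $\mb E[\mu_n(0)^2]$. In short: the quantitative rate genuinely requires the two-parameter bookkeeping in $(n,m)$ and the decoupling of the two block lengths $n_0$ and $n_1$; a single-gap dichotomy at $s=0$ cannot produce it.
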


\begin{proof}
Recall $N$ in Proposition~\ref{prop:new}. Redefine $n_0$ in Theorem~\ref{thm:190225} so that $n_0>N$.  By \eqref{eq:190130}, we only need to prove Theorem~\ref{thm:conv_rate_subdiff} for all $k\ge 4n_0$.

Let $\varepsilon=\varepsilon(n_0,d,\kappa,\dpd)>0$ be a constant to be determined later, and let $c_\varepsilon$ be as in Proposition~\ref{prop:new}.  We set $\ell=2n_0, s=e^{-c_\varepsilon k}$, and write $\mu_n(s), \mu_n^*(s)$ as $\mu_{n,1},\mu_{n,2}$, respectively.  
For $k\ge 4n_0$, by Proposition~\ref{prop:new}, there exists $m\le k-\ell$ such that 
\begin{equation}\label{eq:stabilize-bound}
\mb E\left[|\mu_{m+j,i}-\mb E[\mu_{m+\ell,i}]|^2\right]\le\varepsilon\mb E[\mu_{m+\ell,i}]^2, \quad\text{for all } 0\le j\le\ell, i=1,2.
\end{equation}
Let $I=\{1,\ldots, 3^{2dn_0}\}$ and denote the collection of disjoint $m$-level subcubes of $Q_{m+2n_0}$ by $\{Q_{m}^j:j\in I\}$. We let $\mu_{m,1}^{(j)}=\mu( s; Q_{m}^j)$ and $\mu_{m,2}^{(j)}=\mu^*(s;Q_{m}^j)$ for $j\in I$, and set $a_i=\mb E[\mu_{m+\ell,i}]$, $i=1,2$. Define the event
\[
A=\{\omega\in\Omega:|\mu_{m+\ell,i}-a_i|\le\varepsilon^{1/4}a_i, 
|\mu_{m,i}^{(j)}-a_i|\le\varepsilon^{1/4}a_i \text{ for all }j\in I, i=1,2
\}.
\]

\noindent{\bf Step 1.}  First, we claim that $\mb P(A)>0$. Indeed,  by Chebyshev's inequality and \eqref{eq:stabilize-bound},  $\mb P(|\mu_{m,i}^{(j)}-a_i|\ge \varepsilon^{1/4} a_i)\le \mb E[(\mu_{m,i}-a_i)^2]/(\varepsilon^{1/4} a_i)^2\le \varepsilon^{1/2}$. Similarly, $\mb P(|\mu_{m+\ell, i}-a_i|\ge \varepsilon^{1/4} a_i)\le  \varepsilon^{1/2}$ for $i=1,2$. Hence $\mb P(A^c)\le 2(3^{2dn_0}\varepsilon^{1/2}+\varepsilon^{1/2})<1$ if $\varepsilon>0$ is chosen to be sufficiently small.  The claim is proved.

\noindent{\bf Step 2.} From now on we let $\omega\in A$ be a fixed environment. Setting 
\begin{equation}
\label{eq:def-a}
a=(1-\varepsilon^{1/8})(a_1+a_2),
\end{equation}
we will verify that conditions \eqref{eq:190225-1}\eqref{item:mostpoint} of Theorem~\ref{thm:190225} are satisfied for $n=2n_0$.  
To verify \eqref{eq:190225-1}, by \eqref{eq:181030} and the definition of $A$, we have
$\mu_{m+n_0+1,i}\le (1+\varepsilon^{1/4})a_i$ for $i=1,2$.
Hence \eqref{eq:190225-1} is satisfied.  To verify \eqref{item:mostpoint},  recall the notation $Q_n(x)$ under \eqref{eq:def-qn}.We pick functions $u\in \mc E_{m+2n_0}(s)$, $u^*\in \mc E^*_{m+2n_0}(s)$ such that 
\[
\frac{|\partial u(Q_{m+2n_0})|}{\#Q_{m+2n_0}}
\ge (1-2\varepsilon^{1/4})a_1, \quad \frac{|\partial u^*(Q_{m+2n_0})|}{\#Q_{m+2n_0}}
\ge (1-2\varepsilon^{1/4})a_2.
\]
Since $|\partial u(Q_m;Q_{m+2n_0})|\le |\partial u(Q_m)|\le \mu_{m,1}\#Q_m$, taking $\varepsilon>0$ sufficiently small,
\begin{align*}
\frac{|\partial u(Q_m;Q_{m+2n_0})|}{\#Q_{m+2n_0}}
&=
\frac{|\partial u(Q_{m+2n_0})|}{\#Q_{m+2n_0}}-
\frac{1}{\#Q_{m+2n_0}}\sum_{x\in Q_{m+2n_0}\setminus Q_m}\frac{|\partial u(Q_m(x);Q_{m+2n_0})|}{\#Q_m}\\
&\ge 
(1-2\varepsilon^{1/4})a_1-(1-3^{-2n_0d})(1+\varepsilon^{1/4})a_1>0.
\end{align*}
Hence, up to an affine transformation, we can assume  $\min_{Q_m}u=\min_{Q_{m+2n_0}}u=0$.
Furthermore, let $\Lambda=\{x\in Q_{m+2n_0}: |\partial u(Q_m(x);Q_{m+2n_0})|\ge (1-\varepsilon^{1/8})a_1\#Q_m\}$ and $p:=\#\Lambda/\#Q_{m+2n_0}$.  
Then,
\begin{align*}
(1-2\varepsilon^{1/4})a_1
&\le 
\frac{|\partial u(Q_{m+2n_0})|}{\#Q_{m+2n_0}}= \frac{1}{\#Q_{m+2n_0}}\sum_{x\in Q_{m+2n_0}}\frac{|\partial u(Q_m(x);Q_{m+2n_0})|}{\#Q_m}\\
&\le (1-p)(1-\varepsilon^{1/8})a_1+p(1+\varepsilon^{1/4})a_1,
\end{align*}
which implies $p\ge (1-2\varepsilon^{1/8})/(1+\varepsilon^{1/8})=:p_\varepsilon$.  Similar inequality holds for $u^*$. Hence, taking $\varepsilon$ to be small enough,  we have $2(1-p_\varepsilon)\le\lambda$ and therefore \eqref{item:mostpoint} of Theorem~\ref{thm:190225} is also satisfied for $n=2n_0$.

\noindent{\bf Step 3.} Recall \eqref{eq:def-a}. By Theorem~\ref{thm:190225},  we get $a_1+a_2\le Cs^d$. Further, by \eqref{eq:stabilize-bound},
\[
\mb E[\mu_{m+\ell,i}^2]=\var(\mu_{m+\ell,i})+a_i^2
\le 
(1+\varepsilon)a_i^2, \quad i=1,2.
\]
Therefore,
\begin{align*}
\mb E[\mu_k(0)^2+\mu_k^*(0)^2]\le
\mb E[\mu_{m+\ell,1}^2+\mu_{m+\ell,2}^2]\le C(a_1^2+a_2^2)\le Cs^{2d}=Ce^{-ck},
\end{align*}
where in the first inequality we used Lemma~\ref{lem:var}\eqref{item:mono}.
\end{proof}

Using \eqref{eq:181030} and the fact that $\mu_n(0)$ are uniformly bounded, we can obtain the following improved concentration bound, which is similar to \cite[Corollary~2.10]{AS-14}.

\begin{corollary}\label{cor:mu_conc_bound}
Assume (A1), (A2), (A4).
For any $p\in(0,d)$, there exists a constant $\alpha=\alpha(d,\kappa, p,\dpd)>0$ such that for all $t\ge 1$ and $n\in\N$,  
\[
\mb P(\mu_n(0)+\mu_n^*(0)\ge C3^{-\alpha n}t)
\le
4\exp(-c(t-1)^23^{np}).
\]
\end{corollary}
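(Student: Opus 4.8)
The plan is to upgrade the second-moment bound of Theorem~\ref{thm:conv_rate_subdiff} to an exponential concentration inequality by exploiting the subadditivity relation \eqref{eq:181030}, which expresses $\mu_{m+n}(0)$ as an average of $\#Q_n = 3^{nd}$ i.i.d.\ copies of $\mu_m(0)$. Concretely, fix $n$ and choose a split $n = m + k$ where $m$ is a large but fixed multiple and $k$ is the ``independence'' scale; apply \eqref{eq:181030} to write $\mu_n(0) \le \frac{1}{\#Q_k}\sum_{i=1}^{\#Q_k}\mu_m^{(i)}(0)$ with the $\mu_m^{(i)}(0)$ i.i.d.\ copies of $\mu_m(0)$. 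Since $0 \le \mu_m^{(i)}(0) \le C$ uniformly by \eqref{eq:190130}, these summands are bounded, so Hoeffding's inequality gives, for any $\eta>0$,
\[
\mb P\!\left(\frac{1}{\#Q_k}\sum_{i=1}^{\#Q_k}\mu_m^{(i)}(0) \ge \mb E[\mu_m(0)] + \eta\right) \le \exp\!\left(-c\,\eta^2\,\#Q_k\right) = \exp\!\left(-c\,\eta^2\,3^{kd}\right).
\]

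The second ingredient is that $\mb E[\mu_m(0)] \le \mb E[\mu_m(0)^2]^{1/2} \le C^{1/2} e^{-cm/2} \le C' 3^{-\alpha_0 m}$ for a suitable $\alpha_0 = \alpha_0(d,\kappa)>0$, by Theorem~\ref{thm:conv_rate_subdiff} and Lemma~\ref{lem:var}(b). The remaining task is to balance the two scales. Given the target exponents $p \in (0,d)$ and $\alpha$, I would set $k = \lceil \theta n\rceil$ and $m = n-k$ for an appropriate fixed fraction $\theta = \theta(d,p)\in(0,1)$: choosing $\theta$ close enough to $0$ makes $3^{kd} = 3^{k d}$ comparable to $3^{np}$ (precisely, $kd \ge np$ requires $\theta \ge p/d$, which is admissible since $p<d$), while keeping $m = (1-\theta)n$ a positive fraction of $n$ so that $\mb E[\mu_m(0)] \le C 3^{-\alpha_0(1-\theta)n} =: C 3^{-\alpha n}$ with $\alpha := \alpha_0(1-\theta)/1 > 0$. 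Actually one wants $\theta$ just above $p/d$ so both constraints $kd \gtrsim np$ and $\alpha := \alpha_0(1-\theta) > 0$ hold; since $p<d$ this is always possible, and $\alpha$ then depends only on $(d,\kappa,p)$. Then for $t \ge 1$, writing $\eta = (t-1)\cdot C 3^{-\alpha n}$ (so that $\mb E[\mu_m(0)] + \eta \le t\cdot C 3^{-\alpha n}$), Hoeffding gives
\[
\mb P\!\left(\mu_n(0) \ge C 3^{-\alpha n} t\right) \le \exp\!\left(-c (t-1)^2 3^{-2\alpha n} 3^{kd}\right).
\]
The exponent $3^{-2\alpha n}3^{kd}$ is still $\ge 3^{np}$ provided $kd - 2\alpha n \ge np$, i.e.\ $kd \ge (p+2\alpha)n$; this is arranged by choosing $\theta$ slightly larger (so $kd/n = \theta d$ exceeds $p + 2\alpha_0(1-\theta)$, which again holds for $\theta$ close enough to its admissible range since the left side grows and the right side shrinks as $\theta\uparrow 1$). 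Finally one does the identical argument for $\mu_n^*(0)$ and combines via a union bound, possibly adjusting $c,C$ by factors of $2$, to get the stated bound on $\mu_n(0)+\mu_n^*(0)$.

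The main obstacle — really the only non-mechanical point — is the bookkeeping of the three competing exponents: the concentration scale $3^{kd}$ from the number of i.i.d.\ blocks, the decay rate $3^{-\alpha n}$ of the mean that we are allowed to subtract off, and the target rate $3^{np}$ in the conclusion, all governed by the single free parameter $\theta$ splitting $n = (1-\theta)n + \theta n$. One must check that the admissible window for $\theta$ is nonempty, which uses crucially that $p < d$ (so $p/d < 1$) and that $\alpha_0 > 0$ is a fixed constant; as $\theta \to 1$ we kill the mean-decay ($\alpha\to 0$) but maximize the concentration scale, and as $\theta \to p/d$ we barely retain enough blocks — the inequalities $\theta d \ge p + 2\alpha_0(1-\theta)$ and $\alpha_0(1-\theta)>0$ carve out a valid interval. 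Everything else (uniform boundedness of $\mu_m(0)$ from \eqref{eq:190130}, i.i.d.\ structure of the sub-box quantities as used already in the proof of Lemma~\ref{lem:var}, and the elementary Hoeffding bound for bounded i.i.d.\ sums) is already available or standard, so once the exponent arithmetic is pinned down the corollary follows directly.
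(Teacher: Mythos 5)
Your proposal is correct and follows essentially the same route as the paper: decompose $Q_n$ into i.i.d.\ level-$m$ sub-boxes via \eqref{eq:181030}, apply Hoeffding to the bounded i.i.d.\ average, control the mean at the sub-box scale by Theorem~\ref{thm:conv_rate_subdiff}, and tune the splitting parameter so that the block-count exponent beats $np$ while the mean still decays algebraically (your $\theta$ is the complement of the paper's, and your opening remark about taking $\theta$ ``close to $0$'' is a slip that your later analysis correctly reverses). The exponent bookkeeping and the final union bound match the paper's proof.
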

\begin{proof}
Adjusting the value of $C$ if necessary, it suffices to consider $n$ which is sufficiently large.
Let $\theta\in(0,1)$ to be determined and write $n=n_1+n_2$ with $n_1=\floor{\theta n}$.  Note that by \eqref{eq:190130}, $\mu_n(0)\le C$. 
Set $\bar\mu_n=\mb E[\mu_n(0)]$.  By Theorem~\ref{thm:conv_rate_subdiff} there are constants $C_1, c_2$ depending on $(d,\kappa,\dpd)$ such that $\bar\mu_{n}\le C_13^{-c_2n}$. 
Recall that we have a decomposition $\{1,\ldots,\#Q_{n_2}\}=\Lambda_1\cup\Lambda_2$ as in \eqref{eq:decomposition}.
By \eqref{eq:190130}, \eqref{eq:181030},
\begin{align*}
\mb P(\mu_n(0)\ge 2C_13^{-c_2n_1}t)
&\le 
\mb P\left(
\sum_{i\in\Lambda_1}\mu_{n_1}^{(i)}\big/\#\Lambda_1+\sum_{j\in\Lambda_2}\mu_{n_1}^{(j)}\big/\#\Lambda_2
\ge 2C_13^{-c_2n_1}t
\right)\\
&\le 
\sum_{j=1}^2\mb P\bigg(
\sum_{i\in\Lambda_j}[\mu_{n_1}^{(i)}-\bar\mu_{n_1}]\big/\#\Lambda_j\ge (t-1)C_13^{-c_2n_1} \bigg)
\\
&\le 
2\exp\left(
-C\#Q_{n_2}(t-1)^23^{-2c_2n_1}
\right),
\end{align*}
where we used Hoeffding's inequality (See, e.g., \cite[Theorem~2.8]{BLM-13}) in the last inequality.
Since $\# Q_{n_2}=3^{n_2d}$, we conclude that
\[
\mb P(\mu_n(0)\ge 2C_13^{-c_2\theta n}t)\le 
2\exp\left(
-C(t-1)^23^{[(1-\theta)d-2c_2\theta]n}
\right).
\]
Similar inequality holds for $\mu_n^*(0)$. 
The corollary follows by noticing that 
\[
\mb P(\mu_n(0)+\mu_n^*(0)\ge 2C3^{-\alpha n}t)\le \mb P(\mu_n(0)\ge C3^{-\alpha n}t)+ \mb P(\mu_n(0)\ge C3^{-\alpha n}t)\]
and taking $\theta=\theta(p)$ appropriately.
\end{proof}

\begin{proof}
[Proof of Proposition~\ref{prop:conv_rate_evfpvp}]
Without loss of generality, assume that $\psi$ satisfies (A4).
For $n\in\N$, let $\phi_n \in\mc E_n(0)$ denote the solution of the Dirichlet problem \eqref{eq:corrector} in $Q_n$. Note that $-\phi_n \in \mc E^*_n(0)$. By \eqref{eq:u-mu-min} and \eqref{eq:u-mu-max}, we have
\[
\tfrac{1}{R_n^2}\max_{Q_n}|\phi_n|\le C[\mu_n(0)+\mu_n^*(0)]^{1/d}.
\]
By Corollary~\ref{cor:mu_conc_bound}, for $p\in(0,d)$, there is $\alpha=\alpha(d,\kappa,p)>0$ such that 
\begin{equation}\label{eq:190130-2}
\mb P\big(
\tfrac{1}{R_n^2}\max_{Q_n}|\phi_n|
\ge CR_n^{-\alpha}
\big)
\le 
C\exp(-cR_n^p).
\end{equation}
To obtain the inequality for general subset $B\subset\square_R$, we let $m=m(R)\in\N$ be such that $3^{m-1}<R\le 3^m$.  Let $\tau=\min\{k\ge 0: X_k\notin B\}$ and $\tau^{(m)}=\min\{k\ge 0: X_k\notin Q_m\}$. Then, by  the strong Markov property,
\begin{align*}
\phi(x)&=- E^x_\omega\left[\sum_{i=0}^{\tau-1}\psi(\bar\omega^i)\right]
=\phi_m(x)-E_\omega^x[\phi_m(X_{\tau})]
\end{align*}
and so $\max_{B}|\phi|\le 2\max_{Q_m}|\phi_m|$. Using \eqref{eq:190130-2} and  $R_m/3<R\le R_m$, the proposition follows.
\end{proof}

\section{Proofs of quantitative RWRE results}\label{sec:quan-RWRE}

\subsection{Proof of Theorem~\ref{thm:quan-ergo}}\label{sec:pf-quan-ergo}
\begin{proof}[Proof of Theorem~\ref{thm:quan-ergo}]
First, we extend the definition of $T$ to be a stopping time for the space-time sequence $(X_m-X_0, m)_{m\in\N}$. 

Without loss of generality, we assume $T\le n$ almost surely and $E_\Q\psi=0$.
Let 
\[
R:=\sqrt n
\] 
and define a sequence of stopping times $\tau_0=0$,
\[
\tau_{k+1}=\min\left\{m>\tau_k: X_m-X_{\tau_k}\notin B_R\right\}, \quad \forall k\ge 0.
\]
Let $\tau(x,R):=\min\{m\ge 0: X_m\notin B_R(x)\}$. 
We say that a point $x\in\Z^d$ is ``good" if $\max_{y\in B_R(x)}|E_\omega^y[\sum_{i=0}^{\tau(x,R)-1}\psi(\bar\omega^i)]|<Cn^{1-0.5\alpha}$, where $\alpha$ is the same as in Proposition~\ref{prop:conv_rate_evfpvp}. Then, by Proposition~\ref{prop:conv_rate_evfpvp}, with $\mb P$-probability at least $1-Cn^de^{-cn^{p/2}}$, all points in the ball $B_{R^2}=B_n$ are good. 
Then, in  such a ball,
by the Markov property, 
\begin{align*}
&\Abs{E_\omega\bigg[\sum_{i=T}^{\tau_{k+1}-1}\psi(\bar\omega^i)\mathbbm{1}_{\tau_k<T\le\tau_{k+1}}\bigg]}\\
&=\sum_{y,m,z}\Abs{E_\omega\bigg[\mathbbm{1}_{\{X_{\tau_k}=y,T=m,\tau_k<m\le\tau_{k+1}, X_m=z\}}E_\omega^z\big[\sum_{i=0}^{\tau(y,R)-1}\psi(\bar\omega^i)\big]\bigg]}\\
&\le C\sum_{y,m,z}P_\omega\big(X_{\tau_k}=y,T=m,\tau_k<m\le\tau_{k+1}, X_m=z\big)n^{1-0.5\alpha}\\
&=Cn^{1-0.5\alpha}P_\omega(\tau_k<T\le\tau_{k+1})
\end{align*}
for $k\ge 0$. 
Similarly, in such a ball, we have for $k\ge 0$,
\begin{align*}
\Abs{E_\omega\left[\sum_{i=\tau_k}^{\tau_{k+1}-1}\psi(\bar\omega^i)\mathbbm{1}_{T>\tau_k}\right]}
\le 
Cn^{1-0.5\alpha}P_\omega(\tau_k<T).
\end{align*}
Hence, with $\mb P$-probability at least $1-Cn^de^{-cn^{p/2}}$, by the
two inequalities above,
\begin{align}\label{eq:181104-1}
\Abs{E_\omega\left[\sum_{i=0}^{T-1}\psi(\bar\omega^i)\right]}
&=
\Abs{\sum_{k=0}^{\infty}E_\omega\left[\sum_{i=\tau_k}^{\tau_{k+1}-1}\psi(\bar\omega^i)\mathbbm{1}_{T>\tau_k}\right]-E_\omega\left[\sum_{i=T}^{\tau_{k+1}-1}\psi(\bar\omega^i)\mathbbm{1}_{\tau_k<T\le \tau_{k+1}}\right]}\nn\\
&\le 
C\sum_{k=1}^\infty P_\omega(\tau_k<T)n^{1-0.5\alpha}.
\end{align}
By \cite[Lemma~4]{GZ-12}, there exist constants $c_1, c_2>0$ such that $E_\omega[e^{-c_1\tau(0,R)/R^2}]<e^{-c_2}$ for $\mb P$-almost every $\omega\in\Omega$. By the Markov property, $E_\omega[e^{-c_1\tau_k/R^2}]<e^{-kc_2}$ for all $k\ge 1$. Thus by Chebyshev's inequality,
\[
P_\omega(\tau_k<T)
\le 
P_\omega(\tau_k<n)\le E_\omega[e^{(n-\tau_k)c_1/R^2}]\le Ce^{-kc_2}.
\]
This inequality, together with \eqref{eq:181104-1}, yields the theorem.
\end{proof}

\subsection{Berry-Esseen estimate (Theorem~\ref{thm:BE})}
\begin{proof}
[Proof of Theorem~\ref{thm:BE}]
Let $\psi$ be any bounded
 local function
 with $E_\Q\psi=0$.
By Theorem~\ref{thm:quan-ergo}, with probability at least $1-Ce^{-cn^{p/2}}$, we have $|\qe^x[\sum_{i=0}^{m}\psi(\bar\omega^i)]|\le Cn^{1-\alpha}$ for all $x\in B_n$ and $0\le m\le n$.
Hence, with $\mb P$-probability at least $1-Ce^{-cn^{p/2}}$,
\begin{align*}
\qe\left[\left(\sum_{k=0}^{n-1}\psi(\bar\omega^k)\right)^2\right]
&\le 
2\sum_{i=0}^{n-1}E_\omega\left[\psi(\bar\omega^i)\sum_{j=0}^{n-i-1}\psi(\bar\omega^{i+j})\right]\\
&=
2\sum_{i=0}^{n-1}E_\omega\left[\psi(\bar\omega^i)\qe^{X_i}\left[\sum_{j=0}^{n-i-1}\psi(\bar\omega^{j})\right]\right]\\
&\le 
C\sum_{i=0}^{n-1} n^{1-\alpha}=Cn^{2-\alpha}
\end{align*}
and so
\[
\qe\left[\Abs{\tfrac 1n\sum_{k=0}^{n-1}\psi(\bar\omega^k)}^2\right]
\le 
Cn^{-\alpha}.
\]
In particular, for any unit vector $\ell\in\R^d$, applying the above inequality to $\psi(\omega)=\ell^T\tfrac{\omega(0)}{\tr\omega(0)}\ell$, we have,  with $\mb P$-probability at least $1-Ce^{-cn^{p/2}}$,
\begin{equation}\label{eq:181106-1}
\qe\left[
\Abs{
\frac{1}{n}\sum_{k=0}^{n-1}E_\omega\big[\big((X_{k+1}-X_k)\cdot\ell\big)^2|\ms F_k\big]
-\ell^T\bar a\ell
}^{2}
\right]
\le Cn^{-\alpha}
\end{equation}
where $\ms F_k:=\sigma(X_0,\ldots,X_k)$.
Set $S_n^2=E_\omega[(X_n\cdot\ell)^2]=E_\omega[\sum_{i=0}^{n-1}\ell^T\tfrac{\bar\omega^i(0)}{\tr\bar\omega^i(0)}\ell]$. 
By Theorem~\ref{thm:quan-ergo},   with $\mb P$-probability at least $1-Ce^{-cn^{p/2}}$, 
$|\tfrac 1n S_n^2-\ell^T\bar a\ell|\le n^{-\alpha}$, and so \eqref{eq:181106-1} yields
\[
\qe\left[
\Abs{
\frac{1}{S_n^2}\sum_{k=0}^{n-1}E_\omega\big[\big((X_{k+1}-X_k)\cdot\ell\big)^2|\ms F_k\big]
-1
}^{2}
\right]
\le Cn^{-\alpha}.
\]
Therefore, by 
a quantitative martingale CLT (\cite[Theorem~2]{EH-88} or \cite[Theorem~1.1]{JM-13}),
\[
\sup_{r\in\R}
\Abs{
P_\omega
\left(
X_{n}\cdot\ell/\sqrt n \le r\sqrt{\ell^T\bar a\ell}
\right)
-\Phi(r)
}
\le 
C(n^{-\alpha}+n^{-1})^{1/5}
\le 
Cn^{-(\alpha\wedge 1)/5}
\]
with $\mb P$-probability at least $1-Ce^{-n^{p/2}}$.
\end{proof}

\section{Quantitative bounds for the homogenization errors}\label{sec:quan-ellip-para}
In this section we will use quantitative versions of Berger's argument \cite{B-per} to bound the homogenization errors of both elliptic and parabolic non-divergence form difference operators.  
 The idea, which compares the diffusivity of the RWRE in large scale to that of the Brownian motion, is as explained at the end of Section~\ref{sec:intro}. 
Recall the notations in \eqref{def:omegaxei} and \eqref{def:omegabar}. Note that the covariance matrix $M^{(n)}=E_\omega[X_nX_n^T]/n$ of the large scale random walk has diagonal entries $M^{(n)}_{ii}=E_\omega[\sum_{k=0}^{n-1}\omega(X_k,e_i)]/n$, $i=1,\ldots,d$.  
Hence, with the quantification (Theorem~\ref{thm:quan-ergo}) of the ergodicity of the environment viewed from the particle $(\bar\omega_k)_{k\in\N}$, we can quantify the homogenization error by considering RWRE with long jumps (cf. definitions of stopping time $\sigma$ in both Subsections \ref{subsec:homo-ellip} and \ref{subsec:homo-para}).
Similar to the proof of Lemma~\ref{lem:conv_rate_evfpvp}, such quantifications only hold for ``good" environments. We will choose the jump-size of the large scale RWRE appropriately so that environments around a sufficiently big proportion of points in $B_R$ are good.
\subsection{The elliptic case: proof of Theorem~\ref{thm:homog-nondiv}}\label{subsec:homo-ellip}
Let $u, \bar u$ be as in Theorem~\ref{thm:homog-nondiv}. 
For $q\in(0,d)$, let $\gamma=\gamma(q)\in(0,0.5)$ be a constant whose value will be determined in  the last step of the proof of Theorem~\ref{thm:homog-nondiv}. Let $R_0:=R^\gamma$ and denote the exit time of the random walk from a ball (centered at the starting point) of radius $R_0$ as
\[
\sigma=\sigma(R_0):=\min\left\{n\ge 0: X_n-X_0\notin B_{R^\gamma}\right\}.
\]
\begin{definition}\label{def:good}
 Let $\alpha=\alpha(d,\kappa,1)>0$ and $C$ be as in Proposition~\ref{prop:conv_rate_evfpvp}.
 Let $\psi$ be a local function as in (A3).
 We say that a point $x$ is good (and otherwise bad) if for all $\zeta(\omega)\in\left\{\tfrac{\psi(\omega)}{\tr\omega(0)},\tfrac{\omega(0)}{\tr\omega(0)}\right\}$,
 \[
 \Abs{E_\omega^x\big[\sum_{i=0}^{\sigma-1}(\zeta(\bar\omega_i)-E_\Q\zeta)\big]}\le C\norm{\zeta}_\infty R_0^{2-\alpha}. 
\]
Note that by Proposition~\ref{prop:conv_rate_evfpvp}, $\mb P(x\text{ is bad})\le Ce^{-cR_0}$. 
 Moreover,  by (A1) and (A3), the event $\{x\text{ is bad}\}$ is independent of the environment $\{\omega(y):y\notin B_{R_0+2\dpd}\}$ under $\mb P$.
\item 
\end{definition}

\begin{proof}
[Proof of Theorem~\ref{thm:homog-nondiv}]

Since $g\in C^3(\partial\B_1)$, it can be extended to be a function in $C^{2,1}(\B_1)$ with $|g|_{2,1;\B_1}\le C|g|_{2,1;\partial\B_1}$. By \cite[Theorem 6.6]{GiTr} and ABP inequality, $|\bar u|_{2,1;\B_1}\le C (|f|_{0,1}+|g|_{2,1})$. 
Set 
\[
\bar u_R(x):=\bar u(\tfrac x R).
\]
Then, for $x\in B_R$,
\begin{align}\label{eq:181101}
L_\omega\bar u_{R+1}(x)
&=\tfrac{1}{2\tr\omega(x)}\sum_{i=1}^d\omega_i(x)[\bar{u}(\tfrac{x+e_i}{R+1})+\bar{u}(\tfrac{x-e_i}{R+1})-2\bar{u}(\tfrac{x}{R+1})]\nn\\
&=\tfrac{1}{2\tr\omega(x)R^2}\tr\left(\omega(x)D^2\bar{u}(\tfrac{x}{R})\right)+O(R^{-3}),
\end{align}
where $|O(R^{-3})|\le C (|f|_{0,1;\B_1}+|g|_{2,1;\B_1})R^{-3}$.  

Our proof of the theorem consists of a few steps, where the first two steps are 
to justify that the discrepancy between the discrete and continuous boundaries does not generate much error.
 In Steps 3 and 4 we control the homogenization error by comparing (the covariance matrices of) a large scale random walk to the Brownian motion at good points. In the last two steps, we obtain an exponential tail for the number of bad points.

\noindent{\bf Step 1.} We claim that in $B_R$, $\bar u_R(x)$ is very close to the solution $\hat{u}: \bar B_R\to\R$ of  
\[
\left\{
\begin{array}{lr}
L_\omega\hat{u}=L_\omega\bar u_{R+1} \quad&\text{ in }B_{R},\\
\hat{u}(x)=g(\tfrac{x}{R+1}) \quad&\text{ on }\partial B_{R}.
\end{array}
\right.
\]
To this end, let $u_+, u_-$ denote the functions $u_{\pm}(x)=g(\tfrac x{R+1})\pm C\frac{(R+1)^2-|x|^2}{R^2}$, where $C$ is a constant to be determined. Then $u_{\pm}=\bar u_{R+1}=g(\frac{x}{R+1})$ on $\partial\B_{R+1}$. Taking $C=C(|g|_{2,1})>0$ large enough, for $x\in\B_{R+1}$, we have $\tr [\bar{a}D^2 (u_+-\bar{u}_{R+1})]\le \tfrac{c}{R^2}([g]_{2,1;\B_1}+|f|_{0,1;\B_1}-C)\le 0$, and similarly $\tr [\bar{a}D^2 (u_--\bar{u}_{R+1})]\ge 0$. Hence by the comparison principle,
\[
u_-\le \bar u_{R+1}\le u_+ \quad \text{ in }\B_{R+1}.
\]
In particular, for $x\in\partial B_R$, 
\[
|\bar u_{R+1}(x)-g(\tfrac x{R+1})|
\le 
C\tfrac{(R+1)^2-|x|^2}{R^2}
\le 
\tfrac{C}{R}.
\]
Thus,
noting that $L_\omega(\hat u-\bar u_{R+1})=0$ in $B_R$, by \eqref{eq:u-mu-max}  and Lemma~\ref{lem:subdiff-ub} we get
\[
\max_{B_R}|\hat u-\bar u_{R+1}|\le \tfrac{C}{R},
\]
which, together with the Lipschitz continuity of $\bar u$, yields
\[
\max_{B_R}|\hat u-\bar u_R|\le \tfrac CR.
\]

\noindent{\bf Step 2.} 
Now let $\tilde u$ be the solution of 
\[
\left\{
\begin{array}{lr}
L_\omega\tilde{u}=\tfrac{1}{2\tr\omega(x)R^2}\tr\big(\omega(x) D^2\bar{u}(\tfrac{x}{R})\big) &\text{ in }B_R,\\
\tilde{u}=g(\tfrac{x}{|x|}) &\text{ on }\partial B_R. 
\end{array}
\right.
\]
Then by \eqref{eq:181101} and the Lipschitz continuity of $f$ and $g$, $|L_\omega(\tilde u-\hat u)|\le C/R^3$ in $B_R$ and $|\tilde{u}-\hat u|\le C/R$ 
on $\partial B_R$. By Lemma~\ref{lem:abp} and Lemma~\ref{lem:subdiff-ub}, we get
$\max_{B_R}|\tilde{u}-\hat{u}|\le C/R$ and so by the previous step,
\[
\max_{B_R}|\tilde{u}-\bar u_R|\le \tfrac CR.
\]

\noindent{\bf Step 3.} 
It remains to bound $\max_{B_R}|u-\tilde{u}|$. 
Let $v:=\tilde{u}-u$.  We will define a small perturbation $w$ of $v$, which has a small subdifferential set (see \eqref{eq: emptysubdiff} below).
Notice that 
\[
\left\{
\begin{array}{lr}
L_\omega v=\tfrac{-1}{2R^2}\tr\left(\left(\bar a - \tfrac{\omega(x)}{\tr \omega(x)}\right)D^2\bar{u}(\tfrac{x}{R})\right)+\frac{1}{R^2}f(\tfrac xR)\left(\bar \psi-\tfrac{\psi_\omega(x)}{\tr\omega(x)}\right)
 &\text{ in }B_R,\\
v|_{\partial B_R}=0 &\text{ on }\partial B_R.
\end{array}
\right.
\]
Set $\psi_0(\omega):=\tfrac{\psi_\omega(x)}{\tr(\omega)} -\bar \psi$, and $\omega_0= \tfrac{\omega}{\tr(\omega)}$. 
Then
\begin{align*}
E_\omega^x[v(X_{\sigma})-v(x)]
&=
\tfrac{-1}{R^2}E_\omega^x\left[\sum_{i=0}^{\sigma-1}\tfrac12\tr[(\bar a-\bar\omega_0^i)D^2\bar u(\tfrac{X_i}{R})]+f(\tfrac{X_i}R)\psi_0(\bar\omega^i)\right].
\end{align*}
Since $\bar a, \bar\omega_0$ are bounded matrices, 
$|D^2\bar u(\tfrac{y}{R})-D^2\bar u(\tfrac{x}{R})|\le CR_0/R=CR^{\gamma-1}$ 
and similarly $|f(\tfrac yR)-f(\tfrac xR)|\le CR^{\gamma-1}$
for any $y\in B_{R_0}(x)$,
we have for any good point $x\in B_{R-R_0}$,
\begin{align*}
&E_\omega^x[v(X_{\sigma})-v(x)]\\
&=
-\tfrac{1}{R^2}E_\omega^x\left[\sum_{i=0}^{\sigma-1}\tfrac12\tr[(\bar a-\bar\omega_0^i)D^2\bar u(\tfrac{x}{R})]+f(\tfrac{x}R)\psi_0(\bar\omega^i)\right]+O(R^{\gamma-1})\frac{E_\omega^x[\sigma]}{R^2}\\
&=O(R^{(2-\alpha)\gamma-2}+R^{3(\gamma-1)})=O(R^{(2-\alpha)\gamma-2}).
\end{align*}
We let $\tau_R=\min\{n\ge 0:X_n\notin B_R\}$ and set 
\[
w(x)=v(x)+C_1R^{-\alpha\gamma-2}E^x_\omega[\tau_R], 
\]
 where $C_1>0$ is a constant to be determined. 
Then, for any good point $x\in B_{R-R_0}$,
\[
E_\omega^x[w(X_{\sigma})-w(x)]
=O(R^{(2-\alpha)\gamma-2})-C_1R^{-\alpha\gamma-2}E_\omega^x[\sigma]<0
\]
if $C_1$ is chosen to be large enough since $E^x_\omega[\sigma]\ge R_0^2$.
This implies
\begin{equation}\label{eq: emptysubdiff}
\partial w(x;B_R)=\emptyset \quad \text{for any good point }x\in B_{R-R_0},
\end{equation}
because otherwise there exists $p\in\R^d$ such that 
$0\le E_\omega^x[w(X_{\sigma})-w(x)-p\cdot(X_{\sigma}-x)]=E_\omega^x[w(X_{\sigma})-w(x)]$.
Here, we used the optional stopping theorem and the fact that $X_n$ is a martingale.

\noindent{\bf Step 4.} 
Now, we will apply the ABP inequality to bound $|v|$ from the above. 
Since
 $L_\omega w=L_\omega\tilde{u}-L_\omega u-C_1R^{-\alpha\gamma-2}\le C/R^2$,  by Lemma~\ref{lem:subdiff-ub}, $|\partial w(x;B_R)|\le CR^{-2d}$ for $x\in B_R$. Let
\[
 \ms B_R=\ms B_R(\omega,\gamma):=\#\text{bad points in }B_{R-R_0}.
\]
Display \eqref{eq: emptysubdiff} then yields
\begin{align*}
|\partial w(B_R)|
&\le [\ms B_R+\#(B_R\setminus B_{R-R_0})]CR^{-2d}\\
&\le C(\ms B_R+R^{d+\gamma-1})R^{-2d}.
\end{align*}

By Lemma~\ref{lem:abp}, $\min_{B_R}w\ge -CR|\partial w(B_R)|^{1/d}
\ge -C\ms B_R^{1/d}R^{-1}-CR^{(\gamma-1)/d}$. Therefore, noting that $E^x_\omega[\tau_R]\le (R+1)^2$ and choosing $\alpha<1/d$,
\begin{align*}
\min_{B_R}(\tilde u-u)
&\ge \min_{B_R}w-C_1R^{-\alpha\gamma-2}\max_{x\in B_R}E_\omega^x[\tau_R]\\
&\ge 
-C(\ms B_R^{1/d}R^{-1}+R^{-(1-\gamma)/d}+R^{-\alpha\gamma})\\
&\ge -C(\ms B_R^{1/d}R^{-1}+R^{-\alpha\gamma}).
\end{align*}
Similar upper bound for $\max_{B_R}(\tilde u-u)$ can be obtained by substituting $f,g$ by $-f,-g$ in the problem.
This, together with Step~2, yields
\[
\max_{B_R}|\bar u_R-u|\le 
C(\ms B_R^{1/d}R^{-1}+R^{-\alpha\gamma}).
\]

\noindent{\bf Step 5.} 
 Without loss of generality we only consider $R$ sufficiently big such that $R_0>\dpd$. 
We will show that
 \begin{equation}\label{eq:bad-pt-moment}
\mb E[\exp(cR^{\gamma(1-d)}\ms B_R)]<C.
\end{equation}
To see this, observe that we can cover the ball $B_{R-R_0}$ with $(6R_0)^d$ (not necessarily disjoint) subsets $S_i, i\in I:=\{1,\ldots, (6R_0)^d\}$, such that for each $i\in I$, $\#S_i\le C(R/R_0)^d$ and $\dist(x,y)>6R_0 >2R_0+4\dpd$ for any $x,y\in S_i$. 
In other words, $\{\mathbbm{1}_{x\text{ is bad}}: x\in S_i\}$ are independent random variables.
Since for $x\in\Z^d$, $c<C/2$,
\begin{align*}
\mb E[\exp(cR_0\mathbbm{1}_{x\text{ is bad}})]
&\le 
e^{cR_0}\mb P(x\text{ is bad})+1\\
&\le e^{cR_0}e^{-CR_0}+1
\le 
1+e^{-cR_0}, 
\end{align*}
we have, for each $\ms B^i:=\#\text{bad points in }S_i$,
\[
\mb E[\exp(cR_0\ms B^i)]\le (1+e^{-cR_0})^{C(R/R_0)^d}\le C.
\]
Hence, using H\"older's inequality, 
\begin{align*}
\mb E[\exp(cR_0^{1-d}\ms B_R)]
&\le \mb E[\prod_{i\in I}\exp(cR_0^{1-d}\ms B^i)]\\
&\le 
\prod_{i\in I}\norm{\exp(cR_0^{1-d}\ms B^i)}_{L^{R_0^d}(\mb P)}\\
&=\prod_{i\in I}(\mb E[\exp(cR_0\ms B^i)])^{1/R^d_0}<C.
\end{align*}

\noindent{\bf Step 6.} 
Let $\ms X_R=R^{-\gamma}\ms B_R^{1/d}$ and
\[
\ms X=\max_{R\ge 1}\ms X_R.
\]
Then
 by Chebyshev's inequality, for $t\ge 1$, $R\ge 1$,
\begin{align*}
\mb P(\ms X_R>t)
&\le 
\mb E[\exp(cR^{-\gamma(d-1)}\ms B_R-cR^{\gamma}t^d)]
\\&
\le 
C\exp(-cR^{\gamma}t^d)\le 
C_\gamma R^{-2}\exp(-ct^d),
\end{align*}
where $C_\gamma$ depends on $(\gamma,d,\kappa, \dpd)$. 
Thus by a union bound, for $t\ge 0$, $\mb P(\ms X>t)\le C_\gamma e^{-ct^d}$ and
\[
\mb E[\exp(c\ms X^d)]<\infty.
\]

By Step~4 and the definition of $\ms X$, we have $\max_{B_R}|\bar u_R-u|\le C(\ms X R^{\gamma(\alpha+1)-1}+1)R^{-\alpha\gamma}$. 
The theorem follows by taking $\gamma\le (d-q)/[d(1+\alpha)]$.
\end{proof}

\subsection{The parabolic case: proof of Theorem~\ref{thm:homog-parab}}\label{subsec:homo-para}
The proof of the parabolic case also uses 
a quantification (Theorem~\ref{thm:quan-ergo}) of the ergodicity of the environment from the point of view of the particle
and follows similar ideas as the elliptic case. Note that unlike elliptic operators, linear parabolic operators are related to the stochastic processes $\hat Y_n$ on $\Z^d\times\Z$ defined below.

Let $\hat Y_n=(Y_n,T_n)$ be a Markov chain on $\Z^d\times\Z$ with transition probability
\[
P_\omega\left(\hat Y_{n+1}=(y,m+1)|\hat Y_n=(x,m)\right)=
\left\{
\begin{array}{lr}
\omega_i(x)/[2(1+\tr\omega)] &\text{ if }y=x\pm e_i,\\
1/(1+\tr\omega) &\text{ if }y=x,\\
0 &\text{ otherwise.}
\end{array}
\right.
\]
Note that the time coordinate $T_n=T_0+n$ of $\hat Y_n$ grows linearly. 
Denote the law of $\hat Y_n$ with initial state $\hat Y_0=\hat x$ by $P^{\hat x}_\omega$ and let $E^{\hat x}_\omega$ be its expectation.
For a function $u: \Z^d\times\Z\to\R$,
the corresponding parabolic operator for the process $\hat Y_n$ is
\begin{align*}
\mathscr{L}_\omega u(x,n)
:=\left(\tfrac 12\tr(\omega\nabla^2u(x,n+1))+[u(x,n+1)-u(x,n)]\right)/(1+\tr\omega).
\end{align*}
Clearly, $\ms L_\omega u(\hat x)=E^{\hat x}_\omega[u(\hat Y_1)]-u(\hat x)$. 

\begin{remark}
We have the following comments.
\begin{enumerate}
\item A main difference between $(Y_n)$ and  the random walk $(X_n)$ defined in \eqref{eq:def-RW} is that $(Y_n)$ has positive probability to stay put. 
In particular, $(Y_n)$ can be considered as a time changed process of $(X_n)$. 
\item Denote the environment viewed from the point of the particle $(Y_n)$ as
\begin{equation}\label{def:evfpofop-para}
\hat\omega^i=\theta_{Y_i}\omega\in\Omega, i\ge 0.
\end{equation}
By Theorem~\ref{thm:QCLT}, the Markov chain $\hat\omega_i$ has an invariant ergodic measure $\hat\Q$ that is mutually absolutely continuous with respect to $\mb P$. It can be checked that 
\begin{equation}\label{eq:RN-der}
\hat\Q(\dd\omega)=\frac{(1+\tr\omega)/\tr\omega}{E_\Q[(1+\tr\omega)/\tr\omega]}\Q(\dd\omega). 
\end{equation}
\item Theorem~\ref{thm:quan-ergo} also holds for balanced random walks with stay-put. Indeed, 
for any bounded local function $\psi$
with $E_{\hat \Q}[\psi]=0$, let $\tau_R=\min\{k\ge 0: Y_k\notin B_R\}$. Then, under the notation of \eqref{eq:def-of-L}, $u(x)=E_\omega^x[\sum_{i=0}^{\tau_R-1}\psi(\theta_{Y_i}\omega)]$ solves the Dirichlet problem with $u|_{\partial B_R}=0$ and $L_\omega u=\tfrac{1+\tr\omega}{\tr\omega}\psi$ in $B_R$. 
Note that $E_\Q[\tfrac{1+\tr\omega}{\tr\omega}\psi ]=0$ by \eqref{eq:RN-der}. By Proposition~\ref{prop:conv_rate_evfpvp} (Let $\alpha,p$ be the same as therein.)
\[
\mb P\left(\max_{x\in B_R}|u(x)|\ge CR^{-\alpha}\right)\le Ce^{-cR^p}.
\]
Then, exactly the same argument as in the proof of Theorem~\ref{thm:quan-ergo} (Section~\ref{sec:pf-quan-ergo}) shows that for any $p\in(0,d)$, there exists $\alpha=\alpha(d,\kappa,\dpd,p)>0$ such that
\begin{equation}\label{eq:quant-stayput}
 \mb P\left(\Abs{\frac 1n
 E_\omega\big[\sum_{i=0}^{T\wedge n-1}(\psi(\theta_{Y_i}\omega)-E_{\hat\Q}\psi)\big]}\ge C\norm{\psi}_\infty n^{-\alpha/2}\right)\le Ce^{-cn^{p/2}}
\end{equation}
for any stopping time $T$ of the random walk $(Y_i)$.
\end{enumerate}
\end{remark}

Similar to Section~\ref{subsec:homo-ellip}, we use a discrete parabolic ABP estimate to control solutions of the Dirichlet problem \eqref{eq:para-dirich}.  For any function $u:\bar K_R\to\R$ and $(x,n)\in K_R$, define the parabolic subdifferential sets
\begin{align*}
&\partial u(x,n)=\partial u(x,n;K_R)\\
&=\left\{p\in\R^d: u(y,m)-u(x,n)\ge p\cdot(y-x) \text{ for all }(y,m)\in K_R\cup\partial^p K_R
\text{ with }m>n\right\}.
\end{align*}
and let
\[
\ms Du(x,n)=\left\{(p,q-p\cdot x): p\in\partial u(x,n), q\in[u(x,n),u(x,n+1)]\right\}\subset\R^{d+1}.
\]
The following discrete parabolic ABP inequality is implicitly contained in the proof of \cite[Theorem 2.2]{DGR-15}. 
For the  purpose of completeness, we include its proof in the appendix.
\begin{theorem}
[Parabolic ABP inequality]\label{thm:para-abp}
There exists a constant $C=C(d)$ such that for any function $u:\bar K_R\to\R$,
\[
\min_{\partial^p K_R}u
\le 
\min_{K_R}u+CR^{\tfrac d{d+1}}\left(
\sum_{(x,n)\in K_R}[u(x,n+1)-u(x,n)]|\partial u(x,n)|
\right)^{1/(d+1)}.
\]
\end{theorem}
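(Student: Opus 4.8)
The plan is to run the same argument as the elliptic ABP estimate (Lemma~\ref{lem:abp}): assuming $M:=\min_{\partial^p K_R}u-\min_{K_R}u>0$ (otherwise there is nothing to prove), I would show that the set $\ms Du(K_R):=\bigcup_{(x,n)\in K_R}\ms Du(x,n)\subset\R^{d+1}$ is large, and then read off the stated inequality. The ``cheap half'' is a volume count: if $p\in\partial u(x,n)$ then taking $(y,m)=(x,n+1)$ in the defining inequality forces $u(x,n+1)\ge u(x,n)$, so by Fubini $|\ms Du(x,n)|=[u(x,n+1)-u(x,n)]\,|\partial u(x,n)|$; as this term is also $0$ when $\partial u(x,n)=\emptyset$, subadditivity of Lebesgue measure gives
\[
|\ms Du(K_R)|\le\sum_{(x,n)\in K_R}[u(x,n+1)-u(x,n)]\,|\partial u(x,n)|.
\]
So the real content is the geometric lower bound $|\ms Du(K_R)|\ge c(d)\,M^{d+1}R^{-d}$.

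For this I would exhibit an explicit box inside $\ms Du(K_R)$. Fix $(p,\ell)$ with $|p|<\tfrac M{4R}$ and $\ell\in[\min_{K_R}u+\tfrac M4,\ \min_{K_R}u+\tfrac M2)$, write $w(y,m):=u(y,m)-p\cdot y$ on $\bar K_R$, and let $\mu(m):=\min\{w(y,m'):(y,m')\in\bar K_R,\ m'\ge m\}$ be the running minimum of $w$ taken \emph{downward from the top time slice}; it is non-decreasing in $m$. Since $|p|<M/(4R)$ and $|y|\le R$, one checks $\mu(0)\le w(x_0,n_0)<\min_{K_R}u+\tfrac M4\le\ell$ (with $u(x_0,n_0)=\min_{K_R}u$), whereas every point of the top slice lies in $\partial^p K_R$, so $\mu(\ceil{R^2})\ge\min_{\partial^p K_R}u-R|p|>\min_{K_R}u+\tfrac M2>\ell$. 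Hence $n^*:=\max\{m:\mu(m)\le\ell\}$ is well defined and lies in $\{0,\dots,\ceil{R^2}-1\}$, with $\mu(n^*)\le\ell<\mu(n^*+1)$; because $\mu(n^*)<\mu(n^*+1)$ the minimum $\mu(n^*)$ is attained at time exactly $n^*$, and since $w>\ell\ge\mu(n^*)$ on the lateral boundary one may take the minimizer $x$ to lie in $B_R$, so $(x,n^*)\in K_R$. Then $w(y,m)\ge\mu(n^*+1)>\mu(n^*)=w(x,n^*)$ for all $(y,m)\in\bar K_R$ with $m>n^*$, i.e.\ $p\in\partial u(x,n^*)$, while $w(x,n^*)=\mu(n^*)\le\ell<\mu(n^*+1)\le w(x,n^*+1)$ says $\ell\in[u(x,n^*)-p\cdot x,\,u(x,n^*+1)-p\cdot x]$; together these give $(p,\ell)\in\ms Du(x,n^*)\subset\ms Du(K_R)$.

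Consequently $\ms Du(K_R)$ contains $\{p:|p|<\tfrac M{4R}\}\times[\min_{K_R}u+\tfrac M4,\ \min_{K_R}u+\tfrac M2)$, of volume $|\B_{M/(4R)}|\cdot\tfrac M4=c(d)M^{d+1}R^{-d}$; combined with the cheap half this yields $c(d)M^{d+1}R^{-d}\le\sum_{(x,n)\in K_R}[u(x,n+1)-u(x,n)]\,|\partial u(x,n)|$, which is exactly the claim with $C(d)=c(d)^{-1/(d+1)}$. The purely routine parts are the Fubini identity for $|\ms Du(x,n)|$ and the discrete bookkeeping with time levels up to $\ceil{R^2}$ (in particular that $\bar K_R$ carries no lateral boundary at time $0$). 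The step that really needs care, and where the parabolic geometry enters, is the selection of the contact point: because $\partial u(x,n)$ only ``sees'' $u$ at later times, the convex-envelope-from-below of the elliptic proof must be replaced by the top-down running minimum $\mu$, and the height window for $\ell$ must be chosen (here $[\min_{K_R}u+\tfrac M4,\ \min_{K_R}u+\tfrac M2)$) precisely so that $n^*$ never escapes to the top or bottom time slice.
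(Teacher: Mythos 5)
Your proof is correct and follows essentially the same route as the paper's: both arguments exhibit an explicit subset of $\R^{d+1}$ of volume $c(d)M^{d+1}R^{-d}$ inside $\bigcup_{\hat x\in K_R}\ms Du(\hat x)$ by locating, for each slope--height pair, the \emph{last} time at which the tilted function dips below the given level, and then conclude via the identity $|\ms Du(x,n)|=[u(x,n+1)-u(x,n)]|\partial u(x,n)|$. The only cosmetic differences are that the paper uses the cone $\{(\xi,h):R|\xi|<h<M/2\}$ and selects the contact point by maximizing $N_x=\max\{n:\phi(x,n)\le 0\}$ over $x$, whereas you use a product box and the top-down running minimum $\mu$; these are interchangeable.
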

Note that for any $(x,n)\in K_R$, 
\begin{equation}\label{eq:181127}
|\ms D u(x,n)|= [u(x,n+1)-u(x,n)]|\partial u(x,n;K_R)|.
\end{equation}
Similar to Lemma~\ref{lem:subdiff-ub}, we have an upper bound for $|\ms D u(x,n)|$ in terms of $\ms L_\omega$.
\begin{lemma}
\label{lem:para-subdiff-ub}
Assume that $\omega\in\Omega$ and $\kappa I\le \omega(x)\le \kappa^{-1}I$ for all $x$.
There exists $C=C(d,\kappa)$ such that for $u:\bar{K}_R\to\R$ with $u|_{\partial^p K_R}=0$ and any $\hat x=(x,n)\in K_R$
\[
|\ms D u(\hat x)|\le C\left(\ms L_\omega u\right)_+^{d+1}.
\]
\end{lemma}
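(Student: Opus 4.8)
The plan is to establish this pointwise bound as the exact parabolic analogue of the elliptic estimate Lemma~\ref{lem:subdiff-ub}, organizing the argument around the one-step identity $\ms L_\omega u(\hat x)=E^{\hat x}_\omega[u(\hat Y_1)]-u(\hat x)$. Fix $\hat x=(x,n)\in K_R$. If $\partial u(x,n;K_R)=\emptyset$ then it has Lebesgue measure $0$, so $|\ms D u(\hat x)|=0$ by \eqref{eq:181127} and there is nothing to prove; hence assume $\partial u(x,n;K_R)\neq\emptyset$ and pick $p_0$ in it. Replacing $u(y,m)$ by $u(y,m)-p_0\cdot y$ leaves $\ms L_\omega u(\hat x)$ unchanged (the discrete Hessian $\nabla^2$ annihilates spatial-affine functions and the increment $u(\cdot,n+1)-u(\cdot,n)$ is untouched), leaves $h:=u(x,n+1)-u(x,n)$ unchanged, and only translates $\partial u(x,n;K_R)$ and $\ms D u(\hat x)$; hence it changes neither side of the asserted inequality, and we may assume $0\in\partial u(x,n;K_R)$. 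By definition of the parabolic subdifferential this means $u(y,m)\ge u(x,n)$ for every $(y,m)\in\bar K_R$ with $m>n$; in particular $h\ge 0$ (take $(y,m)=(x,n+1)$) and $u(x\pm e_i,n+1)\ge u(x,n)$ for all $i$ (the relevant points lie in $\bar K_R$ since $x\in B_R$ and $n+1\le\ceil{R^2}$).

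I would then proceed in three steps. First, expand $\ms L_\omega u(\hat x)=E^{\hat x}_\omega[u(\hat Y_1)-u(x,n)]$ as the average, with weight $\tfrac{1}{1+\tr\omega(x)}$ on the stay-put move and weight $\tfrac{\omega_i(x)}{2(1+\tr\omega(x))}$ on each $\pm e_i$ move, of the increments $h$ and $u(x\pm e_i,n+1)-u(x,n)$, all of which are now $\ge 0$; in particular $\ms L_\omega u(\hat x)\ge 0$, so $(\ms L_\omega u(\hat x))_+=\ms L_\omega u(\hat x)$. Second, for any $p\in\partial u(x,n;K_R)$ one has $u(x\pm e_i,n+1)-u(x,n)\ge\pm p\cdot e_i$, which combined with nonnegativity gives $u(x+e_i,n+1)-u(x,n)\ge(p\cdot e_i)_+$ and $u(x-e_i,n+1)-u(x,n)\ge(p\cdot e_i)_-$; feeding these into the average yields
\[
\ms L_\omega u(\hat x)\ \ge\ \frac{1}{1+\tr\omega(x)}\Big(h+\sum_{i=1}^d\tfrac{\omega_i(x)}{2}\,|p\cdot e_i|\Big).
\]
Keeping only the $h$-term gives $h\le(1+\tr\omega(x))\ms L_\omega u(\hat x)$; keeping only the $i$-th slope term and using $\omega_i(x)\ge\kappa$ gives $|p\cdot e_i|\le\tfrac{2(1+\tr\omega(x))}{\kappa}\ms L_\omega u(\hat x)$ for every $i$, so $\partial u(x,n;K_R)\subset[-\rho,\rho]^d$ with $\rho:=\tfrac{2(1+\tr\omega(x))}{\kappa}\ms L_\omega u(\hat x)$ and hence $|\partial u(x,n;K_R)|\le(2\rho)^d$. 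Third, combining these via \eqref{eq:181127} and using $\tr\omega(x)\le d/\kappa$ (from $\omega\le\kappa^{-1}I$),
\[
|\ms D u(\hat x)|=h\,|\partial u(x,n;K_R)|\ \le\ (1+\tr\omega(x))\ms L_\omega u(\hat x)\cdot(2\rho)^d\ \le\ C(d,\kappa)\big(\ms L_\omega u(\hat x)\big)_+^{d+1},
\]
which is the claim.

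The argument has no real obstacle beyond routine bookkeeping, but two points deserve attention. The first is that one should run the estimate through the average form $\ms L_\omega u(\hat x)=E^{\hat x}_\omega[u(\hat Y_1)]-u(\hat x)$, equivalently rewriting $\tfrac12\tr(\omega\nabla^2 u(x,n+1))$ using increments measured from $u(x,n)$ rather than from $u(x,n+1)$; this is precisely what exhibits $\ms L_\omega u(\hat x)$ as a genuinely nonnegative average of increments once the supporting affine function has been subtracted, and it lets one bound the time-increment $h$ and all the spatial slopes $p\cdot e_i$ simultaneously by $\ms L_\omega u(\hat x)$. The second is the $(d{+}1)$-dimensional bookkeeping for $\ms D u(\hat x)$: one uses the identity $|\ms D u(x,n)|=[u(x,n+1)-u(x,n)]\,|\partial u(x,n;K_R)|$ from \eqref{eq:181127} to split the measure of $\ms D u(\hat x)$ into the $h$-factor and the $d$-dimensional subdifferential factor, and then treats the degenerate cases ($\partial u(x,n;K_R)=\emptyset$ or $h=0$) separately, where both sides vanish.
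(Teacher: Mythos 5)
Your argument is correct and is exactly the paper's proof, just written out in full: normalize so that $0\in\partial u(\hat x;K_R)$, use the generator identity $\ms L_\omega u(\hat x)=E^{\hat x}_\omega[u(\hat Y_1)]-u(\hat x)$ to bound the spatial subdifferential by $C(\ms L_\omega u)_+^d$ as in Lemma~\ref{lem:subdiff-ub} and the time increment $u(x,n+1)-u(x,n)$ by $C\ms L_\omega u$ via uniform ellipticity, then multiply using \eqref{eq:181127}. The only point worth noting is that your bounds $h\le(1+\tr\omega)\ms L_\omega u$ and $|p\cdot e_i|\le 2(1+\tr\omega)\kappa^{-1}\ms L_\omega u$ use both sides of the assumption $\kappa I\le\omega\le\kappa^{-1}I$ from Theorem~\ref{thm:homog-parab}, which is indeed in force here and is what the paper means by ``uniform-ellipticity.''
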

\begin{proof}
By the same argument as in Lemma~\ref{lem:subdiff-ub}, $|\partial u(\hat x;K_R)|\le (\ms L_\omega u)_+^d$. Moreover, when $\ms Du(x,n)\neq\emptyset$, 
then $u(x,n+1)-u(x,n)\le C\ms L_\omega u(\hat x)$ by uniform-ellipticity. 
\end{proof}

For $\hat x=(x,n)$, set 
\[
\hat x^{(R)}=(\tfrac xR, \tfrac n{R^2}), \qquad\bar u_R(\hat x):=\bar u(\hat x^{(R)}).
\]
For simplicity of notation, we set
\[
\psi_0(x)=\tfrac{\psi(\theta_x\omega)}{1+\tr\omega(x)}, \quad \xi=\tfrac{\omega}{1+\tr\omega}, \quad b_0=\tfrac{1}{1+\tr\omega}, 
\] 
and write the $\hat\Q$-expectations of $\psi_0,\xi,b_0$ as $\bar\psi_0,\bar\xi,\bar b_0$, respectively. Denote these quantities viewed from the viewed of $Y_n$ as
\[
\hat\psi_0^i:=\psi_0(Y_i), \quad \hat\xi^i:=\xi(Y_i),\quad \hat b^i_0:=b_0(Y_i).
\]

We define good points similarly as in the elliptic case.
Let $\gamma\in(0,1/3)$ be a constant whose value will be determined at the end of the proof of Theorem~\ref{thm:homog-parab}. Set $R_0:=R^\gamma$ and 
\[
\sigma=\sigma(R_0)=:
\min\left\{k\ge 0: \hat Y_k-\hat Y_0\notin K_{R_0}\right\}.
\] 
\begin{definition}
Let $\alpha=\alpha(d,\kappa,\dpd, 1)$ and $C$ be the same as in \eqref{eq:quant-stayput}. We say that a point $x\in\Z^d$ is good (and otherwise bad) if for all $\zeta\in\{\psi_0,\xi,b_0\}$, 
\[
\Abs{
E_\omega^x\big[\sum_{i=0}^{\sigma-1}(\zeta(Y_i)-E_{\hat\Q}[\zeta(0)])\big]
}
\le C(1+\norm{\psi}_\infty)R_0^{2-\alpha}.
\]
Note that by \eqref{eq:quant-stayput}, $\mb P(x \text{ is bad})\le Ce^{-cR_0}$.
\end{definition}

Recalling \eqref{eq:RN-der}, both \eqref{eq:para-dirich} and its effective equation \eqref{eq:effective-para} can be rewritten as 
\begin{equation}
\label{eq:para-dirch-n}
\left\{
\begin{array}{lr}
\ms L_\omega u(\hat x)
=\tfrac{1}{R^2}f(\hat x^{(R)})\psi_0(\theta_x\omega) &\hat x\in K_R,\\
u(x,n)=g\left(\tfrac{x}{|x|\vee\sqrt n}, \tfrac{n}{|x|^2\vee n} \right)  &(x,n)\in\partial^p K_R,
\end{array}
\right.
\end{equation}
and
\[
\left\{
\begin{array}{lr}
\tfrac12\tr(\bar\xi D^2\bar u)+\bar b_0\partial_t\bar u=f\bar\psi_0\quad&\text{ in }\K_1,\\
\bar u=g \quad&\text{ on }\partial^p\K_1.
\end{array}
\right.
\]
For $\hat x=(x,n)\in K_R$, by the Lipschitz continuity of $D^2\bar u$ and $\partial_t \bar u$, 
\begin{align}\label{eq:181130}
&\ms L_\omega\bar u_{R+1}(\hat x)\\
&=\tfrac 12\tr(\xi(x)\nabla^2\bar u_{R+1}(x,n+1))+b_0(x)[\bar u_{R+1}(x,n+1)-\bar u_{R+1}(x,n)]\nn\\
&=\tfrac 1{R^2}\left[\tfrac 12\tr\left(\xi D^2\bar u(\tfrac xR, \tfrac{n+1}{(R+1)^2})\right)+b_0(x)\partial_t\bar u(\hat x^{(R)})
\right]+O(R^{-3})\nn\\
&=\tfrac 1{R^2}f(\hat x^{(R)})\bar\psi_0+\tfrac1{2R^2}\tr\left((\xi-\bar\xi)D^2\bar u(\hat x^{(R)})\right)+\tfrac{1}{R^2}(b_0-\bar b_0)\partial_t\bar u(\hat x^{(R)})+O(R^{-3}).\nn
\end{align}	
\begin{proof}[Proof of Theorem~\ref{thm:homog-parab}:]
Since $g\in C^3_2(\partial^p \K_1)$, it can be extended to be a function in $C^3_2(\K_1)$ with $|g|_{C^3_2(\K_1)} \leq C|g|_{C^3_2(\partial^p \K_1)}$.

\noindent{\bf Step 1.}  Let $u_1: \bar K_R\to\R$ be the solution of 
\[
\left\{
\begin{array}{lr}
\ms L_\omega u_1=\ms L_\omega\bar u_{R+1} \qquad &\text{in }K_R,\\
u_1(\hat x)=g(\hat x^{(R+1)})\qquad &\text{on }\partial^p K_R.
\end{array}
\right.
\]
We claim that $\bar u_R$ is very close to $u_1$. Indeed, for $\hat x=(x,t)$, 
define functions $u_{\pm}(\hat x)=g(\hat x^{(R+1)})\pm C\frac{(R+1)^2-|x|^2}{R^2}$ , where $C$ is a constant to be determined. Then $u_{\pm}\gtrless\bar u_{R+1}$ on $\partial^p\K_{R+1}$. Moreover, $\ms L_\omega (u_{\pm}-\bar u_{R+1})\lessgtr\frac{c}{R^2}(\norm{g}_{C_2^3(\K_1)}\pm |f|_{0,1;\K_1}\mp C)\lessgtr 0$ in $K_{R+1}$ if $C$ is chosen to be large enough. Hence by the comparison principle, $u_-\le\bar u_{R+1}\le u_+$ in $K_{R+1}$. In particular, for $\hat x\in\partial^l K_R$ in the lateral boundary, 
\[
|\bar u_{R+1}(\hat x)-g(\hat x^{(R+1)})|\le C\tfrac{(R+1)^2-|x|^2}{R^2	}\le \tfrac{C}{R}.
\]
To obtain the same control in the time boundary $\partial^t K_R$, we let $v_{\pm}(\hat x)=g(\hat x^{(R+1)})\pm\frac{C}{R^2}[(R+1)^2-t]$. Similarly we have $\ms L_\omega(v_{\pm}-\bar u_{R+1})\lessgtr 0$ in $K_{R+1}$ if $C$ is large enough, and $v_{\pm}\gtrless \bar u_{R+1}$ in $\partial^p K_{R+1}$. Thus $v_-\le \bar u_{R+1}\le v_+$ in $K_{R+1}$ and so for $\hat x\in\partial^t K_R$, 
\[
|\bar u_{R+1}(\hat x)-g(\hat x^{(R+1)})|\le C\tfrac{(R+1)^2-t}{R^2}\le \tfrac{C}{R}.
\]
The two displays above and the comparison principle implies $\max_{K_R}|u_1-\bar u_{R+1}|\le C/R$, which, together with the regularity of $\bar u$, yields
\[
\max_{K_R}|u_1-\bar u_R|\le C/R.
\]

\noindent{\bf Step 2.}  Let $u_2$ be the function that satisfies $u_2=u$ in $\partial^p K_R$ and
\[
\ms L_\omega u_2=\tfrac 1{R^2}f(\hat x^{(R)})\bar\psi_0+\tfrac1{2R^2}\tr\left((\xi-\bar\xi)D^2\bar u(\hat x^{(R)})\right)+\tfrac{1}{R^2}(b_0-\bar b_0)\partial_t\bar u(\hat x^{(R)})\]
in $K_R$. 
By \eqref{eq:181130} and the Lipschitz continuity of $f$ and $D^2\bar u$, $|\ms L_\omega(u_1-u_2)|\le C/R^3$ in $K_R$. Also, $\max_{\partial^p K_R}|u_1-u_2|\le C/R$. By Theorem~\ref{thm:para-abp} and Lemma~\ref{lem:para-subdiff-ub} we get
$\max_{K_R}|u_1-u_2|\le C/R$ and so by the previous step
\[
\max_{K_R}|u_2-\bar u_R|\le C/R.
\]
\noindent{\bf Step 3.} 
 It remains to bound $\max_{K_R}|u_2-u|$. Let $v:=u_2-u$. Then $v$ satisfies $v|_{\partial^pK_R}=0$ and 
\[
\ms L_\omega v=\tfrac 1{R^2}\left[f(\hat x^{(R)})(\bar\psi_0-\psi_0)+\tfrac1{2}\tr\left((\xi-\bar\xi)D^2\bar u(\hat x^{(R)})\right)+(b_0-\bar b_0)\partial_t\bar u(\hat x^{(R)})\right]
\]
in $K_R$.
Thus for $\hat x=(x,n)\in B_{R-R_0}\times[0, R^2-R_0^2)$, 
\begin{align*}
&E_\omega^{\hat x}[v(\hat Y_{\sigma})-v(\hat x)]\\
&=\tfrac{1}{R^2}E_\omega^{\hat x}\big[\sum_{i=0}^{\sigma-1}f(\hat Y_i^{(R)})(\bar\psi_0-\hat\psi_0^i)+\tfrac1{2}\tr\big((\hat\xi^i-\bar\xi)D^2\bar u(\hat Y_i^{(R)})
\big)
\\&
+(\hat b_0^i-\bar b_0)\partial_t\bar u(\hat Y_i^{(R)})\big].
\end{align*}
Since $\hat\xi^i, \bar\xi$ are bounded matrices, 
$|D^2\bar u(\hat y^{(R)})-D^2\bar u(\hat x^{(R)})|\le CR_0/R=CR^{\gamma-1}$ 
and $|f(\hat y^{(R)})-f(\hat x^{(R)})|\le CR^{\gamma-1}$
for any $\hat y\in \hat x+B_{R_0}\times[0,R_0^2)$,
for any good point $x\in B_{R-R_0}$ and $n\in[0,R^2-R^2_0)$, we have
\begin{align*}
E_\omega^{\hat x}[v(\hat X_{\sigma})-v(\hat x)]
&=
\tfrac{1}{R^2}E_\omega^{\hat x}\big[\sum_{i=0}^{\sigma-1}f(\hat x^{(R)})(\bar\psi_0-\hat\psi_0^i)+\tfrac1{2}\tr\big((\hat\xi^i-\bar\xi)D^2\bar u(\hat x^{(R)})
\big)\\&
+(\hat b_0^i-\bar b_0)\partial_t\bar u(\hat x^{(R)})\big]+O(R^{\gamma-1})\frac{E_\omega^{\hat x}[\sigma]}{R^2}\\
&=O(R^{(2-\alpha)\gamma-2}+R^{3(\gamma-1)})=O(R^{(2-\alpha)\gamma-2}).
\end{align*}
Thus, setting 
$\tau_R=\min\{n\ge 0: \hat Y_n\notin K_R\}$ and
(with $C_1>0$ to be determined)
\[
w(\hat x)=v(\hat x)+C_1R^{-\alpha\gamma-2}E^{\hat x}_\omega[\tau_R], 
\]
for any good point $x\in B_{R-R_0}$ and $n\in[0,R^2-R_0^2)$, we get
\[
E_\omega^{\hat x}[w(\hat Y_{\sigma})-w(\hat x)]
=O(R^{(2-\alpha)\gamma-2})-C_1R^{-\alpha\gamma-2}E_\omega^{\hat x}[\sigma]<0
\]
by taking $C_1$ large enough. (Note $E^x_\omega[\sigma]\ge c R_0^2$.) This implies that
\begin{equation}\label{eq: emptysubdiff-para}
\ms D w(x,n;K_R)=\emptyset
\end{equation}
for any good point $x\in B_{R-R_0}$ and $n\in[0,R^2-R_0^2)$.

\noindent{\bf Step 4.} 
Now we will apply the parabolic ABP inequality to bound $|v|$ from the above. 
Since
 $\ms L_\omega w=\ms L_\omega u_2-\ms L_\omega u-C_1R^{-\alpha\gamma-2}\le C/R^2$,  by Lemma~\ref{lem:para-subdiff-ub}, $|\ms D w(\hat x;K_R)|\le CR^{-2(d+1)}$ for $\hat x\in K_R$. 
Let 
\[
\ms B_R=\ms B_R(R_0,R,\omega):=\#\text{bad points in }B_{R-R_0}.
\]
 Display \eqref{eq: emptysubdiff-para} then yields that 
\begin{align*}
|\ms D w(K_R)|
&\le [\ms B_R R^2+\#K_R\setminus (B_{R-R_0}\times[0,R^2-R_0^2))]CR^{-2(d+1)}\\
&\le C(\ms B_R+R_0R^{d-1})R^{-2d}.
\end{align*}

By Theorem~\ref{thm:para-abp}, 
\[
\min_{K_R}w\ge -CR^{d/(d+1)}|\ms D w(K_R)|^{1/(d+1)}
\ge 
-C(\tfrac{\ms B_R}{R^d})^{1/(d+1)}-CR^{-(1-\gamma)/(d+1)}.
\]
 Therefore, noting that $E^{\hat x}_\omega[\tau_R]\le R^2$ and choosing $\alpha<1/(d+1)$,
\begin{align*}
\min_{K_R}(u_2-u)
&\ge \min_{K_R}w-C_1R^{-\alpha\gamma-2}\max_{\hat x\in K_R}E_\omega^{\hat x}[\tau_R]\\
&\ge 
-C(\tfrac{\ms B_R}{R^d})^{1/(d+1)}-CR^{-(1-\gamma)/(d+1)}-CR^{-\alpha\gamma}\\
&\ge 
-C(\tfrac{\ms B_R}{R^d})^{1/(d+1)}-CR^{-\alpha\gamma}.
\end{align*}
Similarly, substituting $f,g$ by $-f,-g$ in the problem, we get
\[
\max_{K_R}(u_2-u)\le 
C(\tfrac{\ms B_R}{R^d})^{1/(d+1)}+CR^{-\alpha\gamma}.
\]
The above inequalities, together with Step~2, yields
\[
\max_{K_R}|\bar u_R-u|\le C(1+\ms B_R^{1/(d+1)}R^{\alpha\gamma-d/(d+1)})R^{-\alpha\gamma}.
\]
\noindent{\bf Step 5.} 
By the same argument as in the proof of \eqref{eq:bad-pt-moment} in the elliptic case, we also get $\mb E[\exp(cR^{\gamma(1-d)}\ms B_R)]<C$. Setting 
\[
\ms Y=\max_{R\ge 1}R^{-d\gamma/(d+1)}\ms B_R^{1/(d+1)}, 
\]
similar argument as in the previous subsection gives $\mb E[\exp(c\ms Y^{d+1})]<\infty$.

The theorem follows by choosing $\gamma=\gamma(q,\alpha)$ appropriately.
\end{proof}

\begin{remark}
Finally, we give some comments about i.i.d.\,structure and uniform ellipticity used in the paper as following.
\begin{enumerate}

\item Our arguments might be applicable to ergodic environments with appropriate mixing rates. 
Of course, in mixing cases, there may be different rates for the homogenization errors and different probability estimates (depending on how mixing the environment is). 
See also remarks in \cite{AS-14}. 
It would be  interesting to figure out the corresponding results in this direction.

\item We assume uniform ellipticity for technical reasons (e.g., the use of ABP and Harnack inequalities for uniform elliptic operators). As \cite{GZ-12, BD-14} show, in i.i.d.\,environment, the loss of ellipticity does not prevent the RWRE to be diffusive in large scale. 
We believe that obtaining quantitative homogenization results  in a balanced random environment without uniform ellipticity is an important open problem.
\end{enumerate}
\end{remark}

\section*{Appendix: Proof of the discrete parabolic ABP inequality}
The following proof of Theorem~\ref{thm:para-abp} is inspired by the results of Deuschel, Guo, Ramirez \cite{DGR-15}.
We include it here for the purpose of completeness.
\begin{proof}
[Proof of Theorem~\ref{thm:para-abp}]
Without loss of generality, assume 
$\min_{\partial^p K_R}u=0$, and for some $\hat x_0=(x_0,n_0)\in K_R$,
\[M:=-u(\hat x_0)=-\min_{\bar K_R}u>0.\]
Set $\Lambda=\left\{(\xi,h)\in\R^d\times\R:R|\xi|<h<\tfrac M2\right\}$. Then $|\Lambda|=CM^{d+1}/R^d$. To prove the theorem, it suffices to show that 
\[
\Lambda\subset\ms  D u(K_R):=\bigcup_{\hat x\in K_R}\ms Du(\hat x;K_R).\]
For any fixed $(\xi,h)\in\Lambda$,  set $\phi(x,n)=u(x,n)+\xi\cdot x+h$. By the definition of $\Lambda$, we have $\phi(\hat x_0)<0$. We claim that there exists $\hat x_1=(x_1,n_1)\in K_R$ with $n_1\ge n_0$ such that $\phi(\hat x_1)\le 0$ and $(\xi,h)\in\ms D u(\hat x_1;K_R)$. Indeed, for $x\in B_R$, let 
\[
N_x=\max\left\{n: (x,n)\in K_R\text{ and }\phi(x,n)\le 0\right\}.
\]
Here we use the convention $\max\emptyset=-\infty$. We define $(x_1,n_1)$ to be such that
\[
n_1:=N_{x_1}=\max_{x\in B_R} N_x\ge N_{x_0}\ge n_0.
\]
Then, for any $\hat x=(x,n)\in K_R$ with $n>n_1$, we have $\phi(\hat x)>0\ge \phi(\hat x_1)$. Also, for any $\hat x\in\partial^p K_R$, by the definition of $\Lambda$ and $\phi$, we have $\phi(\hat x)>0$ and so $\xi\in\partial u(\hat x_1;K_R)$. Moreover, $u(x_1,n_1)\le h-x_1\cdot\xi<u(x_1,m)$ for any $m>n_1$. Therefore $(\xi,h)\in\ms D u(\hat x_1)$. The theorem follows by using \eqref{eq:181127}.
\end{proof}




\begin{acks}
We thank two anonymous referees whose comments improve the presentation of our article. 
We thank Scott Armstrong and Charlie Smart for letting us know their new version \cite{AS-20} of \cite{AS-14} on arXiv.  XG did the  main part of his work while at the University of Wisconsin Madison. He thanks Professors Timo Sep\"{a}l\"{a}inen  and other colleagues at the UW for their hospitality and the supportive environment they created.
\end{acks}


\end{document}